\renewcommand{\setminus}{{\smallsetminus}}
\newtheorem{theorem}{Theorem}[section]
\newtheorem{lemma}[theorem]{Lemma}
\newtheorem{proposition}[theorem]{Proposition}
\newtheorem{definition}[theorem]{Definition}
\theoremstyle{remark}
\newtheorem{remark}[theorem]{Remark}
\theoremstyle{remark}
\numberwithin{equation}{section}
\begin{document}
\title{\bf Adjoint twisted Reidemeister torsion  and  Gram matrices}

\author{Ka Ho Wong and Tian Yang}

\date{}

\maketitle

\begin{abstract}  We compute the adjoint twisted Reidemeister torsion for  closed oriented hyperbolic $3$-manifolds and for hyperbolic $3$-manifolds with toroidal boundary.  In our formula, we consider the manifold as obtained by doing a Dehn-filling along suitable boundary components of a fundamental shadow link complement, and the formula is in terms of the logarithmic holonomy of the meridians of the boundary components. As an important special case, we also write down a formula of the adjoint twisted Reidemeister torsion for the double of a hyperbolic $3$-manifold with totally geodesic boundary in terms of the edge lengths of a geometric ideal triangulation of the manifold. These unexpected formulas were inspired by, and played an important role in, the study of the asymptotic expansion of  quantum invariants\,\cite{WY}.

\end{abstract}

\section{Introduction}

We compute the \emph{adjoint twisted Reidemeister torsion}  (see Section \ref{TRT})  for closed orientable hyperbolic $3$-manifolds and for orientable hyperbolic $3$-manifolds with toroidal boundary with a representation of the fundamental group into $\mathrm{PSL}(2;\mathbb C)$ for which the adjoint twisted Reidemeister torsion is defined. 

To present the $3$-manifolds, we use a $3$-dimensional analogue of the pair-of-pants decompositions for surfaces, known as the \emph{fundamental shadow link complements} (\cite{CT}, see also Section \ref{fsl}). The fundamental shadow link complements form a universal family of $3$-manifolds with toroidal boundary in the sense that all  orientable $3$-manifolds with empty or toroidal boundary can be obtained from one of them by doing a Dehn-filling along suitable boundary components\,\cite{CT}. Then in Theorem \ref{main1}, we obtain an explicit formula of the adjoint twisted Reidemeister torsion of the fundamental shadow link complements, which turns out to be a product of the square root of the determinant of the values of the \emph{Gram matrix function} (see Section \ref{Gram})  at the logarithmic holonomy of the meridians.  As a consequence, in the main result of this paper, Theorem \ref{main3}, we obtain an explicit formula of the adjoint twisted Reidemeister torsion of  hyperbolic $3$-manifolds obtained by doing a Dehn-filling along suitable boundary components of a fundamental shadow link complement.  By \cite{CT, HK2}, these manifolds contain most closed and cusped orientable hyperbolic $3$-manifolds in the sense explained in Remark \ref{most}

To the best of our knowledge, this is  by far the only explicit formula of the adjoint twisted Reidemeister torsion for most hyperbolic $3$-manifolds. It is worth mentioning that the  $1$-loop Conjecture\,\cite{DG} suggests another formula of this quantity for cusped hyperbolic $3$-manifolds in terms of the shape parameters. 
 
In a setting dual to that in  Theorem \ref{main1} and Theorem \ref{main3}, we in Theorem \ref{main2} compute the adjoint twisted Reidemeister torsion of the double of a geometrically ideally triangulated hyperbolic $3$-manifold with totally geodesic boundary, in terms of the edge lengths of the triangulation.

The relationship between the two intensively studied geometric quantities in our formulas, the adjoint twisted Reidemeister torsion and the Gram matrix, is completely unexpected, and is suggested by the asymptotic expansion  of various quantum invariants of $3$-manifolds proposed by the authors in \cite{WY}. This is one of the few examples where ideas from the study of quantum invariants shed light on a solution of purely geometric problems. In return, these formulas also play an essential role in the study of the asymptotic expansion of quantum invariants\,\cite{WY}.

\subsection{Fundamental shadow link complements}

\begin{theorem} \label{main1}  Let $M=\#^{d+1}(S^2\times S^1)\setminus L_{\text{FSL}}$ be the complement of a fundamental shadow link $L_{\text{FSL}}$ with $n$ components $L_1,\dots,L_n,$ which is the orientable double of the union of truncated tetrahedra $\Delta_1,\dots, \Delta_d$ along pairs of the triangles of truncation (see Section \ref{fsl}).

\begin{enumerate}[(1)] 
\item  Let $\boldsymbol m=(m_1,\dots, m_n)$ be the system of the meridians of a tubular neighborhood of the components of $L_{\text{FSL}}.$ For an $\boldsymbol m$-regular $\mathrm{PSL}(2; \mathbb C)$-character $[\rho]$ of $M$ (see Definition \ref{reg}), let $( u_1,
\dots, u_n)$ be the logarithmic holonomies of $\boldsymbol m$ in $\rho.$ For each $k\in\{1,\dots,d\},$ let $L_{k_1},\dots,L_{k_6}$ be the components of $L_{\text{FSL}}$ intersecting $\Delta_k,$ and let $\mathbb G_k=\mathbb G\Big(\frac{ u_{k_1}}{2},\dots,\frac{ u_{k_6}}{2}\Big)$ be the value of the Gram matrix function at $\Big(\frac{ u_{k_1}}{2},\dots,\frac{ u_{k_6}}{2}\Big).$ Then the adjoint twisted Reidemeister torsion of $M$ with respect to $\boldsymbol m$ (see Definition \ref{ATRT}) at $[\rho]$ is 
$$\mathbb T_{(M,\boldsymbol m)}([\rho])=\pm2^{3d}\prod_{k=1}^d \sqrt{\det\mathbb G_k}.$$

\item In addition to the conditions of (1), let $\boldsymbol \mu=(\mu_1,\dots,\mu_n)$ be a system of simple closed curves on $\partial M,$ and let $( u_{\mu_1},\dots,  u_{\mu_n})$ be their logarithmic holonomies which are functions of $( u_1,
\dots, u_n)$ near $[\rho].$ If $[\rho]$ is $\boldsymbol\mu$-regular, then the adjoint twisted Reidemeister torsion of $M$ with respect to $\boldsymbol\mu$ at $[\rho]$ is
 $$ \mathbb T_{(M,\boldsymbol\mu)}([\rho])=\pm2^{3d}\det\bigg(\frac{\partial ( u_{\mu_1},\dots, u_{\mu_n})}{\partial ( u_1,\dots, u_n)}\bigg)\prod_{k=1}^d \sqrt{\det\mathbb G_k},$$
 where $\frac{\partial ( u_{\mu_1},\dots, u_{\mu_n})}{\partial ( u_1,\dots, u_n)}$ is the Jacobian matrix of $( u_{\mu_1},\dots,  u_{\mu_n})$ with respect to $( u_1,
\dots, u_n)$ evaluated at $[\rho].$ 
\end{enumerate} 
\end{theorem}

\begin{remark} By  (\ref{cosh}) and the analyticity of both sides, the logarithmic  holonomies of the system of longitudes, and hence of any system of simple closed curves on $\partial M,$ can be explicitly written in terms of the $(u_1,\dots,  u_n).$ Therefore, the formula in (2) can be written explicitly in terms of $(u_1,\dots,  u_n).$
\end{remark}

\begin{remark} By \cite{P, NZ}, all the characters near that of the holonomy representation of the complete hyperbolic structure of $M$ are $\boldsymbol\mu$-regular for any  system of simple closed curves $\boldsymbol\mu$ on $\partial M.$
\end{remark}

\subsection{Hyperbolic  $3$-manifolds}

 As a consequence of Theorem \ref{main1}, we obtain in Theorem \ref{main3} a  formula of the adjoint twisted Reidemeister torsion for hyperbolic $3$-manifolds  with empty or toroidal boundary obtained by doing a  Dehn-filling along suitable boundary components of a fundamental shadow link complement, with a technique assumption on the holonomy representation of the hyperbolic structure. Recall from \cite{CT} that every orientable hyperbolic $3$-manifolds with empty or toroidal boundary can be obtained in this way, and from \cite{HK2} and as explained in Remark \ref{most} for most closed and cusped hyperbolic $3$-manifolds the technical assumption is satisfied.

Let $M$ be a fundamental shadow link complement as in Theorem \ref{main1}. 
For $m$ with $0\leqslant m\leqslant n,$ let $\boldsymbol\mu=(\mu_1,\dots,\mu_m)$ be a system of simple closed curves on $\partial M$ such that $\mu_i\subset T_i,$ and let $\boldsymbol\nu=(\nu_{m+1},\dots,\nu_n)$  be a system of simple closed curves on $\partial M$ such that $\nu_j\subset T_j.$  Let $M_{\boldsymbol\mu}$ be the  $3$-manifold  obtained from $M$ by doing the Dehn-filling along $\boldsymbol\mu.$ Then $\boldsymbol\nu$ can be considered as a system of simple closed curves on $\partial M_{\boldsymbol\mu}.$ If $m=n,$ then $\boldsymbol \nu=\emptyset$ and $M_{\boldsymbol\mu}$ is a closed  $3$-manifold,

 \begin{theorem} \label{main3}  Suppose $M_{\boldsymbol\mu}$ is hyperbolic. 
Let $[\rho_{\boldsymbol\mu}]$ be a $\boldsymbol\nu$-regular character of $M_{\boldsymbol\mu}$  and let $\rho$ be the restriction of $\rho_{\boldsymbol\mu}$ on $M.$ 
Let $( u_1, \dots, u_n)$ be the logarithmic holonomies of the system of meridians $\boldsymbol m$  in $[\rho]$ and for each $k\in\{1,\dots,d\},$ let $L_{k_1},\dots,L_{k_6}$ be the components of $L_{\text{FSL}}$ intersecting $\Delta_k$ and let $\mathbb G_k=\mathbb G\Big(\frac{ u_{k_1}}{2},\dots,\frac{ u_{k_6}}{2}\Big)$ be the value of the Gram matrix function at $\Big(\frac{ u_{k_1}}{2},\dots,\frac{ u_{k_6}}{2}\Big).$ Let $( u_{\mu_1},\dots, u_{\mu_m})$ and $( u_{\nu_{m+1}},\dots, u_{\nu_n})$ respectively be the logarithmic holonomies of $\boldsymbol\mu$ and $\boldsymbol\nu$ considered as functions of $( u_1, \dots, u_n)$ near $[\rho].$  Let $(\gamma_1,\dots,\gamma_m)$ be a system of  simple closed curves on $\partial M$ that are isotopic to the core curves of the solid tori filled in and let $( u_{\gamma_1},\dots, u_{\gamma_m})$ be their logarithmic holonomies in $[\rho].$ 
If $[\rho]$ is in the distinguished component of the $\mathrm{PSL}(2;\mathbb C)$-character variety of $M,$ then the adjoint twisted Reidemeister torsion of $M_{\boldsymbol\mu}$ with respect to $\boldsymbol \nu$ at $[\rho_{\boldsymbol\mu}]$ is 
 $$ \mathbb T_{(M_{\boldsymbol\mu},\boldsymbol\nu)}([\rho_{\boldsymbol\mu}])=\pm2^{3d-2m}\det\bigg(\frac{\partial ( u_{\mu_1},\dots, u_{\mu_m}, u_{\nu_{m+1}},\dots, u_{\nu_n})}{\partial ( u_1,\dots, u_n)}\bigg)\prod_{k=1}^d \sqrt{\det\mathbb G_k}\prod_{i=1}^m\frac{1}{\sinh^2\frac{ u_{\gamma_i}}{2}},$$
 where $\frac{\partial ( u_{\mu_1},\dots, u_{\mu_m}, u_{\nu_{m+1}},\dots, u_{\nu_n})}{\partial ( u_1,\dots, u_n)}$ is the Jacobian matrix of $( u_{\mu_1},\dots,  u_{\mu_m}, u_{\nu_{m+1}},\dots, u_{\nu_n})$ with respect to $( u_1,\dots, u_n)$ evaluated at $[\rho].$
 
 In particular, if $M_{\boldsymbol\mu}$ is closed, $\rho_{\boldsymbol\mu}$ is the holonomy representation of the hyperbolic structure and $[\rho]$ is in the distinguished component of the $\mathrm{PSL}(2;\mathbb C)$-character variety of $M,$ then the adjoint twisted Reidemeister torsion of $M_{\boldsymbol\mu}$ is 
  $$ \mathrm {Tor}(M_{\boldsymbol\mu};\mathrm{Ad}_{\rho_{\boldsymbol\mu}})=\pm2^{3d-2n}\det\bigg(\frac{\partial ( u_{\mu_1},\dots, u_{\mu_n})}{\partial ( u_1,\dots, u_n)}\bigg)\prod_{k=1}^d \sqrt{\det\mathbb G_k}\prod_{i=1}^n\frac{1}{\sinh^2\frac{ u_{\gamma_i}}{2}}.$$
 
\end{theorem}

\begin{remark}\label{most} From \cite{CT, HK2}, the manifolds $M_{\boldsymbol\mu}$ in Theorem \ref{main3} cover most closed and cusped orientable hyperbolic $3$-manifolds in the sense that for each boundary component $T_i$ of $M,$  except for at most $114$ simple closed curves $\mu_i,$  the complete hyperbolic metric on $M_{\boldsymbol\mu}$ can be connected to the complete hyperbolic metric on $M$ by a one-parameter family of hyperbolic cone metrics on $M.$ As a consequence, $[\rho]$  lies in the distinguished component of the $\mathrm{PSL}(2;\mathbb C)$-character variety of $M$ satisfying the condition in Theorem \ref{main3}.  We believe that this  condition could be removed and the formula holds for all the closed hyperbolic $3$-manifolds and hyperbolic $3$-manifolds with toroidal boundary with a $\mathrm{PSL}(2;\mathbb C)$-representation for which the adjoint twisted Reidemeister torsion is defined. 
\end{remark}


\subsection{Double of hyperbolic polyhedral $3$-manifolds}

\begin{theorem} \label{main2} Let $N$ be a hyperbolic polyhedral $3$-manifold which is the union of truncated tetrahedra $\Delta_1,\dots,\Delta_d$ along pairs of hexagonal faces, and let $M$ be the double of  $N$ with the double of the edges $e_1,\dots,e_n$ removed  (see Section \ref{dhp}). 
\begin{enumerate}[(1)] 
\item  For $i\in \{1,\dots,n\},$ let $l_i$ be the lengths of $e_i.$ Let $\boldsymbol l$ be the system of the preferred longitudes of $M$ with the logarithmic holonomies $(2l_1,\dots,2l_n).$ For each $k\in\{1,\dots,d\},$ let $e_{k_1},\dots,e_{k_6}$ be the edges intersecting $\Delta_k,$ and let $\mathbb G_k=\mathbb G( l_{k_1},\dots, l_{k_6})$ be the value of the Gram matrix function at $( l_{k_1},\dots, l_{k_6}).$ Let $\rho$ be the holonomy representation of the hyperbolic cone metric on $M$ obtained by doubling the hyperbolic polyhedral metric of $N.$ Then
 $$\mathbb T_{(M,\boldsymbol l)}([\rho])=\pm2^{3d}\prod_{k=1}^d \sqrt{\det\mathbb G_k}.$$

\item Let $\boldsymbol m$ be the system of meridians of a tubular neighborhood of the double of the edges, and let $(\theta_1,\dots,\theta_n)$ be the cone angles at the edges which are functions of the lengths $(l_1,\dots,l_n)$ of the edges of $N.$  Then
$$ \mathbb T_{(M,\boldsymbol m)}([\rho])=\pm \mathbf i^n2^{3d-n}\det\bigg(\frac{\partial (\theta_1,\dots,\theta_n)}{\partial (l_1,\dots,l_n)}\bigg)\prod_{k=1}^d \sqrt{\det\mathbb G_k},$$
where $\frac{\partial (\theta_1,\dots,\theta_n)}{\partial (l_1,\dots,l_n)}$ is the Jacobian matrix of $(\theta_1,\dots,\theta_n)$ with respect to $(l_1,\dots,l_n)$ evaluated at $[\rho].$

\item Suppose $\overline M$ is the double of a geometrically ideally triangulated hyperbolic $3$-manifold $N$ with totally geodesic boundary (which is $M$ with the removed double of edges filled back). Let $\rho$ and $\overline\rho$ respectively be the holomony representations of $M$ and $\overline M.$ Let $(l_1,\dots,l_n)$ be the lengths of the edges of $N$ and let  $(\theta_1,\dots,\theta_n)$ be the cone angles considered as functions of $(l_1,\dots,l_n).$ For each $k\in\{1,\dots,d\},$ let $e_{k_1},\dots,e_{k_6}$ be the edges intersecting $\Delta_k$ and let $\mathbb G_k=\mathbb G( l_{k_1},\dots, l_{k_6})$ be the value of the Gram matrix function at $( l_{k_1},\dots, l_{k_6}).$ Then 
 $$ \mathrm{Tor}(\overline M;\mathrm{Ad}_{\overline\rho})=\pm \mathbf i^n2^{3d-3n}\det\bigg(\frac{\partial (\theta_1,\dots,\theta_n)}{\partial (l_1,\dots,l_n)}\bigg)\prod_{k=1}^d \sqrt{\det\mathbb G_k}\prod_{i=1}^n\frac{1}{\sinh^2l_i},$$
 where $\frac{\partial (\theta_1,\dots,\theta_n)}{\partial (l_1,\dots,l_n)}$ is the Jacobian matrix of $(\theta_1,\dots,\theta_n)$ with respect to $(l_1,\dots,l_n)$ evaluated at $[\rho].$ 

\end{enumerate} 
\end{theorem}

\begin{remark} Since the cone angles $(\theta_1,\dots,\theta_n)$ are the sums of the dihedral angles which by  (\ref{cos}) can be explicitly written as functions of $(l_1,\dots,l_n),$ both of the formulas in (2) and (3) can be written explicitly in terms of the edge lengths $(l_1,\dots,l_n).$ \end{remark}

\begin{remark} We believe that a similar formula of the adjoint twisted Reidemeister torsion of a geometrically ideally triangulated cusped hyperbolic $3$-manifold and of a geometrically triangulated closed hyperbolic $3$-manifold  should also exist, respectively in terms of the decorated edge lengths and the edge lengths.\end{remark}



\subsection{Outline of the proof} The main tool in the computation is the Mayer-Vietoris formula stated in Theorem \ref{MaV}.  To use this formula, we in Sections \ref{TP} and \ref{TD} respectively compute the adjoint twisted Reidemeister torsion of the pairs of pants and of the $D$-blocks, and  in Section \ref{TMV} compute the Reidemeister torsion of the Mayer-Vietoris sequence. Then the results follow from Theorem \ref{MaV}.\\

\noindent\textbf{Acknowledgements.}  The authors would like to thank Francis Bonahon, Giulio Belletti, Yi Liu, Feng Luo, Tushar Pandey, Hongbin Sun, Zhizhang Xie and Seokbeom Yoon for helpful discussions. The authors are also grateful to the referees' invaluable suggestions, both in the mathematics and in the writing. The second author is supported by NSF Grants DMS-1812008 and DMS-2203334.


\section{Preliminaries}

\subsection{Reidemeister torsions}\label{TRT}

Let $\mathrm C_*$ be a finite chain complex 
$$0\to \mathrm C_d\xrightarrow{\partial}\mathrm C_{d-1}\xrightarrow{\partial}\cdots\xrightarrow{\partial}\mathrm C_1\xrightarrow{\partial} \mathrm C_0\to 0$$
of $\mathbb C$-vector spaces, and for each $\mathrm C_k$ choose a basis $\mathbf c_k.$ Let $\mathrm H_*$ be the homology of $\mathrm C_*,$ and for each $\mathrm H_k$ choose a basis $\mathbf h_k$ and a lift $\widetilde{\mathbf h}_k\subset \mathrm C_k$ of $\mathbf h_k.$ We also choose a basis $\mathbf b_k$ for each image $\partial (\mathrm C_{k+1})$ and a lift $\widetilde{\mathbf b}_k\subset \mathrm C_{k+1}$ of $\mathbf b_k.$ Then $\mathbf b_k\sqcup \widetilde{\mathbf b}_{k-1}\sqcup \widetilde{\mathbf h}_k$ form a new basis of $\mathrm C_k.$ Let $[\mathbf b_k\sqcup \widetilde{\mathbf b}_{k-1}\sqcup \widetilde{\mathbf h}_k;\mathrm c_k]$ be the determinant of the transition matrix from the  basis $\mathbf c_k$ to the new basis $\mathbf b_k\sqcup \widetilde{\mathbf b}_{k-1}\sqcup \widetilde{\mathbf h}_k.$
 Then the Reidemeister torsion of the chain complex $\mathrm C_*$ with the chosen bases $\{\mathbf c_k\}$ and $\{\mathbf h_k\}$ is defined by 
\begin{equation}
\mathrm{Tor}(\mathrm C_*, \{\mathbf c_k\}, \{\mathbf h_k\})=\pm\prod_{k=0}^d[\mathbf b_k\sqcup \widetilde{\mathbf b}_{k-1}\sqcup \widetilde{\mathbf h}_k;\mathbf c_k]^{(-1)^{k+1}}\in\mathbb C^*/\{\pm 1\}.
\end{equation}
It is easy to check that $\mathrm{Tor}(\mathrm C_*, \{\mathbf c_k\}, \{\mathbf h_k\})$ depends only on the choices of $\{\mathbf c_k\}$ and $\{\mathbf h_k\},$ and does not depend on the choices of $\{\mathbf b_k\}$ and the lifts $ \{\widetilde{\mathbf b}_k\}$ and $\{\widetilde{\mathbf h}_k\}.$

We recall the twisted Reidemeister torsion of a CW-complex following the conventions in \cite{P2}. Let $K$ be a finite CW-complex and let $\rho:\pi_1(K)\to\mathrm{SL}(N;\mathbb C)$ be a representation of its fundamental group. Consider the twisted chain complex 
$$\mathrm C_*(K;\rho)= \mathbb C^N\otimes_\rho \mathrm C_*(\widetilde K;\mathbb Z)$$
where $\mathrm C_*(\widetilde K;\mathbb Z)$ is the simplicial complex of the universal covering of $K$ and $\otimes_\rho$ means the tensor product over $\mathbb Z$ modulo the relation
$$\mathbf v\otimes( \gamma\cdot\mathbf c)=\Big(\rho(\gamma)^T\cdot\mathbf v\Big)\otimes \mathbf c,$$
where $T$ is the transpose, $\mathbf v\in\mathbb C^N,$ $\gamma\in\pi_1(K)$ and $\mathbf c\in\mathrm C_*(\widetilde K;\mathbb Z).$ The boundary operator on $\mathrm C_*(K;\rho)$ is defined by
$$\partial(\mathbf v\otimes \mathbf c)=\mathbf v\otimes \partial(\mathbf c)$$
for $\mathbf v\in\mathbb C^N$ and $\mathbf c\in\mathrm C_*(\widetilde K;\mathbb Z).$ Let $\{\mathbf e_1,\dots,\mathbf e_N\}$ be the standard basis of $\mathbb C^N,$ and let $\{c_1^k,\dots,c_{d^k}^k\}$ denote the set of $k$-cells of $K.$ Then we call
$$\mathbf c_k=\big\{ \mathbf e_i\otimes c_s^k\ \big|\ i\in\{1,\dots,N\}, s\in\{1,\dots,d^k\}\big\}$$
the standard basis of $\mathrm C_k(K;\rho).$ Let $\mathrm H_*(K;\rho)$ be the homology of the chain complex $\mathrm C_*(K;\rho)$ and let $\mathbf h_k$ be a basis of $\mathrm H_k(K;\rho).$ Then the Reidemeister torsion of $K$ twisted by $\rho$ with the basis $\{\mathbf h_k\}$ is 
$$\mathrm{Tor}(K, \{\mathbf h_k\}; \rho)=\mathrm{Tor}(\mathrm C_*(K;\rho),\{\mathbf c_k\}, \{\mathbf h_k\}).$$

By \cite{P}, $\mathrm{Tor}(K, \{\mathbf h_k\}; \rho)$ depends only on the conjugacy class of $\rho.$ By for example \cite{T}, the Reidemeister torsion is invariant under elementary expansions and elementary collapses of CW-complexes; and by \cite{M}  it is invariant under subdivisions, hence defines an invariant of PL-manifolds and of topological manifolds of dimension less than or equal to $3.$


A useful tool to compute the twisted Reidemeister torsion is the Mayer-Vietoris sequence. Suppose $K$ is a finite CW-complex and  $K_1, K_2, \dots, K_n$ are its sub-complexes. For $\{i,j\}\subset\{1,2,\dots, n\},$ let $K_{ij}=K_i\cap K_j$ if it is non-empty. Assume
\begin{enumerate}[(1)]
\item $K=K_1\cup K_2\cup\dots\cup K_n,$ and 
\item $K_i\cap K_j\cap K_k=\emptyset$ for all $\{i,j,k\}\subset\{1,\dots,n\}.$ 
\end{enumerate}
For a representation $\rho:\pi_1(K)\to \mathrm{SL}(N;\mathbb C),$ let $\rho_k$ and $\rho_{ij}$ respectively be the restriction of $\rho$ to $\pi_1(K_k)$ and $\pi_1(K_{ij}).$

\begin{lemma}\label{shortexact} The follow sequence of chain complexes 
\begin{equation}\label{se}
0\to\bigoplus_{\{i,j\}\subset\{1,\dots,n\}}\mathrm{C}_*(K_{ij};\rho_{ij})\xrightarrow{\delta}\bigoplus_{k=1}^n\mathrm{C}_*(K_k;\rho_k)\xrightarrow{\epsilon}\mathrm C_*(K;\rho)\to 0
\end{equation}
is exact, where $\epsilon$ is the sum defined by
$$\epsilon(\mathbf c_1,\dots,\mathbf c_n)=\sum_{k=1}^n\mathbf c_k$$
and $\delta$ is the alternating sum defined by
$$(\delta\mathbf c)_k=-\sum_{j=1}^{k-1}\mathbf c_{jk}+\sum_{l=k+1}^n\mathbf c_{kl}.$$
\end{lemma}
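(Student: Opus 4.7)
My plan is to establish exactness of (\ref{se}) by a cell-by-cell analysis, reducing the problem to a standard Mayer--Vietoris calculation at the level of integral cellular chain complexes and then tensoring with the twisting representation. The key combinatorial input is the no-triple-intersection hypothesis: because $K_j\cap K_k\cap K_l=\emptyset$ for distinct $j,k,l,$ every cell of $K$ is of exactly one of two types---either (i) an \emph{interior} cell of a unique $K_k,$ lying in no pairwise intersection, or (ii) a cell of a unique $K_{jk},$ and hence shared between precisely the two members $K_j,K_k$ of the cover. Consequently a chain $c\in\mathrm C_*(K_k;\rho_k)$ admits a canonical decomposition into an interior part plus contributions supported on the various $K_{jk}$ for $j\ne k.$

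From this observation the three exactness properties are short. Surjectivity of $\epsilon$ is immediate since every cell of $K$ is a cell of some $K_k.$ For injectivity of $\delta,$ suppose $\delta(c_{**})=0;$ at any cell of $K_{ab}$ (with $a<b$), the $a$-th component $(\delta c)_a$ restricts to $+c_{ab}$ alone, because no other $c_{a\ell}$ is supported here---otherwise $K_{ab}\cap K_{a\ell}\ne\emptyset$ would violate the hypothesis---so $c_{ab}=0.$ The inclusion $\operatorname{im}\delta\subseteq\ker\epsilon$ is built into the sign convention: each $c_{ab}$ contributes $+c_{ab}$ to the $a$-th summand and $-c_{ab}$ to the $b$-th summand, which cancel in $\mathrm C_*(K;\rho).$

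The one step with substance is the reverse inclusion $\ker\epsilon\subseteq\operatorname{im}\delta.$ Given $(c_1,\dots,c_n)$ with $\sum_k c_k=0,$ I would decompose each $c_k$ as above and read the relation cell-by-cell: on an interior cell of $K_k$ only $c_k$ contributes, forcing its interior part to vanish; on a cell of $K_{ab}$ (with $a<b$) only $c_a$ and $c_b$ contribute, so their restrictions are negatives of one another. Setting $c_{ab}\in\mathrm C_*(K_{ab};\rho_{ab})$ to equal the restriction of $-c_a$ (equivalently of $c_b$), one checks directly that $\delta((c_{ab})_{a<b})=(c_1,\dots,c_n).$ I expect no genuine obstacle in the argument; the only care needed is the sign bookkeeping in $\delta$ and the verification that identifying cells of $K_{jk}$ with their images in $K_j$ and $K_k$ is compatible with the fact that $\rho_{jk},$ $\rho_j|_{\pi_1(K_{jk})}$ and $\rho_k|_{\pi_1(K_{jk})}$ are all restrictions of the single representation $\rho,$ so the tensoring with $\mathbb C^N$ preserves the exactness established at the integral level.
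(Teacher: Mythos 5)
Your proposal is correct, and for the one substantive step it takes a different (more constructive) route than the paper. The paper handles injectivity of $\delta$, surjectivity of $\epsilon$, and $\epsilon\circ\delta=0$ essentially as you do, but for the inclusion $\ker\epsilon\subseteq\operatorname{im}\delta$ it avoids any explicit preimage: it counts $m$-cells using $d^m=\sum_k d_k^m-\sum_{\{j,k\}}d_{jk}^m$ (which encodes your observation that every cell is either interior to one $K_k$ or belongs to exactly one $K_{jk}$), and then concludes $\operatorname{rank}(\delta_m)=\operatorname{null}(\epsilon_m)$ from injectivity of $\delta_m$ and surjectivity of $\epsilon_m$, so the containment of subspaces is an equality. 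Your cell-by-cell decomposition produces an explicit $\delta$-preimage instead; this is more hands-on and makes the role of the no-triple-intersection hypothesis visible at every cell, at the cost of the bookkeeping you flag about identifying $\mathrm C_*(K_{jk};\rho_{jk})$ with the corresponding subspaces of $\mathrm C_*(K_j;\rho_j)$ and $\mathrm C_*(K_k;\rho_k)$ (which the paper's dimension count sidesteps entirely, since only the dimensions $Nd_{jk}^m$ matter). One small slip: with the stated convention $(\delta\mathbf c)_a=+\mathbf c_{ab}$ and $(\delta\mathbf c)_b=-\mathbf c_{ab}$ for $a<b$, you should set $c_{ab}=c_a|_{K_{ab}}=-c_b|_{K_{ab}}$ rather than the opposite sign; as written your $\delta((c_{ab}))$ equals $-(c_1,\dots,c_n)$, which of course still lies in the image by linearity, so nothing breaks.
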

This short exact sequence can be found in for example \cite[Proposition 15.2]{BT} for untwisted complexes, and the proof for the twisted case is similar. The short exact sequence (\ref{se})  induces the following long exact sequence $\mathcal H:$
\begin{equation}\label{le}
\dots\to\mathrm H_{m+1}(K;\rho)\xrightarrow{\partial_{m+1}}\bigoplus_{\{i,j\}\subset\{1,\dots,n\}}\mathrm H_m(K_{ij};\rho_{ij})\xrightarrow{\delta_m}\bigoplus_{k=1}^n\mathrm H_m(K_k;\rho_k)\xrightarrow{\epsilon_m}\mathrm H_m(K;\rho)\to\dots,
\end{equation}
and the twisted Reidemeister torsion of $K$ can be computed by those of $\{K_{k}\},$ $\{K_{ij}\}$ and $\mathcal H.$

\begin{theorem}[Mayer-Vietoris]\label{MaV}(\cite[Proposition 0.11]{P})  Let $\mathbf h_*,$ $\{\mathbf h_{k,*}\}$ and $\{\mathbf h_{ij,*}\}$ respectively be bases of $\mathrm H_*(K;\rho),$ $\mathrm H_*(K_k;\rho_k)$ and $\mathrm H_*(K_{ij};\rho_{ij}),$ and let $\mathbf h_{**}$ be the union of  $\mathbf h_*,$ $\sqcup_k\mathbf h_{k,*}$ and $\sqcup_{\{i,j\}}\mathbf h_{ij,*}$ which is a basis of $\mathcal H.$ Then
$$
\mathrm{Tor}(K, \{\mathbf h_*\}; \rho)
= \pm \frac{\prod_{k=1}^n \mathrm{Tor}(K_k, \mathbf h_{k,*}; \rho_k)}{\prod_{\{i,j\}\subset\{1,\dots,n\}} \mathrm{Tor}(K_{ij}, \mathbf h_{ij,*}; \rho_{ij})\cdot \mathrm{Tor}(\mathcal{H}, \mathbf h_{**})}.
$$
\end{theorem}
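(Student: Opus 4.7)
The plan is to apply the general multiplicativity principle for Reidemeister torsion to the short exact sequence (\ref{se}) of Lemma \ref{shortexact}. Recall the principle: if $0\to A_*\to B_*\to C_*\to 0$ is a short exact sequence of finite chain complexes of $\mathbb C$-vector spaces equipped with bases $\mathbf a_*,\mathbf b_*,\mathbf c_*$ in each degree such that $\mathbf b_k$ is, up to sign, the concatenation of the image of $\mathbf a_k$ with a chosen lift of $\mathbf c_k$, and with any choice of homology bases, then
\[
\mathrm{Tor}(B_*) \;=\; \pm\, \mathrm{Tor}(A_*)\cdot \mathrm{Tor}(C_*)\cdot \mathrm{Tor}(\mathcal H),
\]
where $\mathcal H$ is the induced long exact sequence in homology treated as an acyclic complex. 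I would take this as a standard linear-algebraic input, proved by expressing each transition determinant $[\mathbf b_k\sqcup\widetilde{\mathbf b}_{k-1}\sqcup\widetilde{\mathbf h}_k;\mathbf c_k]$ at the $B$-level as a product of the analogous determinants at the $A$- and $C$-levels together with the homology contribution from $\mathcal H$.

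First, I would identify each term in the multiplicativity formula with the desired expressions. Because Reidemeister torsion of a direct sum of chain complexes with the disjoint union of bases factors as the product of torsions of the summands, the left-hand term of (\ref{se}) contributes $\prod_{\{j,k\}}\mathrm{Tor}(K_{jk},\mathbf h_{jk,*};\rho_{jk})$, the middle term contributes $\prod_{k=1}^n\mathrm{Tor}(K_k,\mathbf h_{k,*};\rho_k)$, and the right-hand term is $\mathrm{Tor}(K,\{\mathbf h_*\};\rho)$. Rearranging the multiplicativity identity to solve for the right-hand torsion yields the formula in the statement.

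Next, I would verify that the standard cellular bases of the three chain complexes are compatible with $\delta$ and $\epsilon$ in the sense required. The hypothesis that no triple intersection $K_j\cap K_k\cap K_l$ is nonempty together with the alternating-sign definition of $\delta$ is tailored so that each cell of $B_k=\bigoplus_k \mathrm C_k(K_k;\rho_k)$ either comes from a cell of an intersection $K_{jk}$ (contributing to two summands with opposite signs under $\delta$), or maps bijectively to a cell of $K$ via $\epsilon$. Consequently, the transition matrix between the standard basis of $B_k$ and the compatible basis assembled from $A_k$ and a lift of $\mathbf c_k$ is a signed permutation matrix, so its determinant is $\pm 1$ and contributes nothing beyond the overall sign already present in the statement.

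The main obstacle is the careful bookkeeping of signs and basis lifts: the compatibility of the cellular bases with $\delta$ and $\epsilon$ must be stated \emph{up to sign}, and one has to check that the $\mathrm{SL}(N;\mathbb C)$-coefficient system $\rho$ does not spoil the direct-sum and lifting arguments (here the use of $\rho(\gamma)^T$ in the tensor relation, together with the fact that inclusions of subcomplexes induce the restrictions $\rho_k$ and $\rho_{jk}$, ensures the maps $\delta$ and $\epsilon$ are $\mathbb C$-linear and respect the standard bases up to the same signs as in the untwisted case). Once these bookkeeping matters are handled, the theorem follows directly from the multiplicativity principle applied to (\ref{se}).
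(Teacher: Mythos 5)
Your proposal follows essentially the same route as the paper: apply Porti's multiplicativity theorem for short exact sequences of based chain complexes (Theorem \ref{03}, i.e.\ \cite[Theorem 0.3]{P}) to the sequence of Lemma \ref{shortexact}, use the direct-sum factorization of torsion to identify the three terms with the products over the pieces $K_k$ and the intersections $K_{jk}$, and verify the basis-compatibility hypothesis cell by cell using the no-triple-intersection condition. The only imprecision is your claim that the transition matrix $[\delta(\mathbf c_E^m)\sqcup\widetilde{\mathbf c}_G^m;\mathbf c_F^m]$ is a signed permutation matrix: the columns coming from $\delta$ each have two nonzero entries $+1$ and $-1$, so the matrix is instead, up to permutation of rows and columns, block upper triangular with $2\times 2$ unipotent blocks and an identity block, which still yields determinant $\pm 1$ as you conclude.
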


In \cite[Proposition 0.11]{P}, Theorem \ref{MaV} is proved for the union of two sub-complexes, and the proof of the current form carries out in essentially the same way.
\subsection{Adjoint twisted Reidemeister torsions}\label{adrt}

In this section we recall results of Porti\,\cite{P} for the Reidemeister torsions of hyperbolic $3$-manifolds twisted by the adjoint action $\mathrm {Ad}_\rho=\mathrm {Ad}\circ\rho$ of an irreducible $\mathrm {PSL}(2;\mathbb C)$-representation $\rho.$ Here $\mathrm {Ad}$ is the adjoint action of $\mathrm {PSL}(2;\mathbb C)$ on its Lie algebra $\mathfrak{psl}(2;\mathbb C)\cong \mathbb C^3.$

For a closed orientable hyperbolic $3$-manifold  $M$ with the holonomy representation $\rho,$ by the Weil local rigidity theorem and the Mostow rigidity theorem,
$\mathrm H_k(M;\mathrm{Ad}_\rho)=0$ for all $k.$ Then the adjoint twisted Reidemeister torsion 
$$\mathrm{Tor}(M;\mathrm{Ad}_\rho)\in\mathbb C^*/\{\pm 1\}$$
 is defined without making any additional choice.

Now suppose  $M$ is a compact, orientable  $3$-manifold with boundary consisting of $n$ disjoint tori $T_1 \dots,  T_n$ whose interior admits a complete hyperbolic structure with  finite volume. Let $\mathrm X(M)$ be the $\mathrm{PSL}(2;\mathbb C)$-character variety of $M.$  

By \cite{CS, FG}, every irreducible component of $\mathrm X(M)$ has dimension greater than or equal to $n;$ and we denote by $\mathrm X^n(M)=\cup \mathrm X_k(M)$ the union of the irreducible components $\{\mathrm X_k(M)\}$ of $\mathrm X(M)$ that have dimension exact equal to $n.$ If $M$ is hyperbolic, then $\mathrm X^n(M)$ is non-empty because it contains the distinguished component $ \mathrm X_0(M)$ containing the character of the holomony representation of the complete hyperbolic structure of $M$\,\cite{T, NZ}.  The main reason that we consider the space $\mathrm X^n(M)$ in this article instead of $\mathrm X_0(M)$ is that: If $M_{\boldsymbol\mu}$ is a hyperbolic $3$-manifold obtained by doing a Dehn-filling along a system of simple closed curves $\boldsymbol\mu$ on $\partial M,$ then it is not clear whether the restriction of  the character of the holonomy representation of the hyperbolic structure on $M_{\boldsymbol\mu}$ to $M$ always lies in $\mathrm X_0(M);$ but it always lies in $\mathrm X^n(M)$ by a standard Mayer-Vietoris sequence argument. This fact will be used in the proof of Theorem \ref{main2}.

Below we recall two fundamental results (Theorem \ref{HM} and Theorem \ref{funT}) of Porti\,\cite{P}. Theorem \ref{funT} was originally proved for characters in $\mathrm X_0(M),$ but by essentially the same argument can be generalized to characters in $\mathrm X^n(X).$

We denote by $\mathrm X^{\text{irr}}(M)$ the Zariski-open subset of $\mathrm X(M)$ consisting of the irreducible characters.

\begin{theorem}\cite[Section 3.3.3]{P}\label{HM} For a system of simple closed curves $\boldsymbol\alpha=(\alpha_1,\dots,\alpha_n)$ on $\partial M$ with $\alpha_i\subset T_i,$ $i\in\{1,\dots,n\},$  and a character $[\rho]$ in a Zariski open subset of $\mathrm X_0(M)\cap\mathrm X^{\text{irr}}(M),$  we have:
\begin{enumerate}[(i)]
\item For $k\neq 1,2,$ $\mathrm H_k(M;\mathrm{Ad}\rho)=0.$
\item  For $i\in\{1,\dots,n\},$ up to scalar $\mathrm Ad_\rho(\pi_1(T_i))^T$ has a unique invariant vector $\mathbf I_i\in \mathbb C^3;$ and
$$\mathrm H_1(M;\mathrm{Ad}\rho)\cong \mathbb C^n$$ 
with a basis
$$\mathbf h^1_{(M,\alpha)}=\{\mathbf I_1\otimes [\alpha_1],\dots, \mathbf I_n\otimes [\alpha_n]\}$$
where $([\alpha_1],\dots,[\alpha_n])\in \mathrm H_1(\partial M;\mathbb Z)\cong 
\bigoplus_{i=1}^n\mathrm H_1(T_i;\mathbb Z).$
 
\item Let $([T_1],\dots,[T_n])\in \bigoplus_{i=1}^n\mathrm H_2(T_i;\mathbb Z)$ be the fundamental classes of $T_1,\dots, T_n.$ Then 
 $$\mathrm H_2(M;\mathrm{Ad}\rho)\cong\bigoplus_{i=1}^n\mathrm H_2(T_i;\mathrm{Ad}\rho)\cong \mathbb C^n$$ 
with  a basis 
$$\mathbf h^2_M=\{\mathbf I_1\otimes [T_1],\dots, \mathbf I_n\otimes [T_n]\}.$$
\end{enumerate}
\end{theorem}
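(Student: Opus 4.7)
The plan is to verify the three claims in order, by combining a direct calculation on each boundary torus with Poincar\'e-Lefschetz duality, the long exact sequence of the pair $(M,\partial M)$, and (for the final dimension count) Thurston's deformation theorem.

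For claim (1), irreducibility of $[\rho]$ forces the $\mathrm{Ad}\rho$-invariants of $\mathbf{sl}(2;\mathbb C)$ (and dually the coinvariants) to vanish by Schur's lemma, so $H_0(M;\mathrm{Ad}\rho)=H^0(M;\mathrm{Ad}\rho)=0$. Also $H_3(M;\mathrm{Ad}\rho)=H^3(M;\mathrm{Ad}\rho)=0$ since $\partial M\neq\emptyset$ lets $M$ deformation retract onto a $2$-dimensional spine.

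For claims (2) and (3), I would first compute the twisted homology of each boundary torus. The restriction $\rho|_{\pi_1(T_j)}$ is abelian, and for generic $[\rho]\in\mathrm X_0(M)$ its adjoint action on $\mathbf{sl}(2;\mathbb C)$ has a $1$-dimensional invariant line $\mathbb C\mathbf I_j$ (the axis of the peripheral holonomy; in the parabolic case at the complete structure, the nilpotent fixing the cusp point, as a short matrix calculation confirms). With the standard CW-structure on a torus this yields $\dim H_0(T_j;\mathrm{Ad}\rho)=\dim H_2(T_j;\mathrm{Ad}\rho)=1$, and by the Euler-characteristic identity $\dim H_1(T_j;\mathrm{Ad}\rho)=2$; crucially the classes $\mathbf I_j\otimes[\alpha]$ and $\mathbf I_j\otimes[T_j]$ are automatic cycles since $\mathbf I_j$ is invariant. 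Next, the $\mathrm{Ad}$-invariant Killing form on $\mathbf{sl}(2;\mathbb C)$ gives Poincar\'e-Lefschetz duality $H_k(M,\partial M;\mathrm{Ad}\rho)\cong H^{3-k}(M;\mathrm{Ad}\rho)$, while universal coefficients over $\mathbb C$ equate the dimensions of $H^k$ and $H_k$. Combined with claim (1) these force $H_0(M,\partial M;\mathrm{Ad}\rho)=H_3(M,\partial M;\mathrm{Ad}\rho)=0$, and comparing alternating dimensions in the long exact sequence of the pair yields $\dim H_1(M;\mathrm{Ad}\rho)=\dim H_2(M;\mathrm{Ad}\rho)$. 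To pin this common value to $n$, I invoke Thurston's theorem that $\mathrm X_0(M)$ is smooth of complex dimension $n$ at the discrete faithful character (one parameter per cusp), together with the identification $T_{[\rho]}\mathrm X(M)\cong H^1(\pi_1(M);\mathrm{Ad}\rho)=H^1(M;\mathrm{Ad}\rho)$ at irreducible $[\rho]$; genericity then propagates this dimension to the whole irreducible locus of $\mathrm X_0(M)$.

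For the prescribed bases, the vanishing $H_3(M,\partial M;\mathrm{Ad}\rho)=0$ makes $H_2(\partial M;\mathrm{Ad}\rho)\to H_2(M;\mathrm{Ad}\rho)$ injective between $n$-dimensional spaces, hence an isomorphism, so $\{\mathbf I_j\otimes[T_j]\}_{j=1}^n$ is a basis of $H_2(M;\mathrm{Ad}\rho)$. A parallel count in the same sequence (using $\dim H_2(M,\partial M)=n$) shows that $H_1(\partial M;\mathrm{Ad}\rho)\to H_1(M;\mathrm{Ad}\rho)$ is surjective, with Lagrangian kernel inside the $2n$-dimensional symplectic space $H_1(\partial M;\mathrm{Ad}\rho)$ (where the symplectic form is the cup-product pairing); at generic $[\rho]$ this kernel transversally misses any prescribed line $\mathbb C\mathbf I_j\otimes[\alpha_j]$ with $[\alpha_j]\neq 0$ in each $H_1(T_j;\mathrm{Ad}\rho)$, so $\{\mathbf I_j\otimes[\alpha_j]\}_{j=1}^n$ projects to a basis of $H_1(M;\mathrm{Ad}\rho)$. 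The main obstacle is the dimension identification $\dim H^1(M;\mathrm{Ad}\rho)=n$, which is the one place genuine hyperbolic geometry enters via Thurston's deformation theorem rather than purely cellular topology and algebraic Poincar\'e duality; the transversality step in the $H_1$-basis statement is also delicate but holds away from the A-polynomial divisor on $\mathrm X_0(M)$.
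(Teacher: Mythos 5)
This statement is not proved in the paper at all: it is quoted verbatim from Porti \cite[Section 3.3.3]{P}, so there is no in-paper argument to compare against. Your sketch is, in substance, a faithful reconstruction of Porti's own proof: Schur's lemma for $\mathrm H_0$ and $\mathrm H^0$, a $2$-dimensional spine for $\mathrm H_3$, the computation $\dim\mathrm H_0(T_j)=\dim\mathrm H_2(T_j)=1$, $\dim\mathrm H_1(T_j)=2$ on each torus, Poincar\'e--Lefschetz duality via the Killing form, the long exact sequence of $(M,\partial M)$, and the ``half lives, half dies'' count. Two steps are stated more loosely than they can be carried out. First, the dimension count $\dim\mathrm H^1(M;\mathrm{Ad}\rho)=n$ does not follow from Thurston's deformation theorem alone: Thurston gives $\dim\mathrm X_0(M)\geq n$ and duality gives $\dim\mathrm H^1(M)\geq n$, but the upper bound requires knowing $\dim\mathrm H^1(M;\mathrm{Ad}\rho_0)=n$ at the complete structure (injectivity of $\mathrm H^1(M)\to\mathrm H^1(\partial M)$ there, a rigidity/Hodge-theoretic input) and then upper semicontinuity of $\dim\mathrm H^1$ over $\mathrm X_0(M)$; the dimension is then $n$ on a Zariski-open dense set, not on ``the whole irreducible locus.'' Second, for the $\mathrm H_1$-basis one needs the $n$-plane spanned by $\{\mathbf I_j\otimes[\alpha_j]\}$ to meet the $n$-dimensional Lagrangian kernel of $\mathrm H_1(\partial M)\to\mathrm H_1(M)$ trivially, and the nontrivial content is that the locus where this holds is \emph{nonempty} (equivalently, that $\mathbb T_{(M,\alpha)}$ is not identically zero, which is Theorem \ref{funT}(i)); you flag this as delicate but do not supply the nonemptiness. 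Both points are exactly where Porti's memoir does real work, so your outline is the right one, but as written it assumes rather than proves the two inputs that make the theorem true only ``generically.''
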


\begin{remark}[\cite{P,NZ,HK}]\label{Rm} Important examples of the characters in Theorem \ref{HM} include the character of the holonomy representation of the complete hyperbolic structure on the interior of $M,$
 the restriction of the holonomy representation of the closed $3$-manifold $M_{\boldsymbol\mu}$ obtained from $M$ by doing the hyperbolic Dehn-filling along the system of simple closed curves $\boldsymbol\mu$ on $\partial M,$
 and the holonomy representation of a hyperbolic structure on the interior of $M$ whose completion is a conical manifold with cone angles less than $2\pi.$
\end{remark}

\begin{definition}\label{reg} Let $\boldsymbol\alpha=(\alpha_1,\dots,\alpha_n)$ be a system of simple closed curves on $\partial M$ with $\alpha_i\subset T_i,$ $i\in\{1,\dots,n\}.$  A character $[\rho]$ in $\mathrm X^n(M)\cap\mathrm X^{\text{irr}}(M)$ is \emph{$\boldsymbol\alpha$-regular} if condition (ii) in Theorem \ref{HM} is satisfied.
\end{definition}

\begin{remark}  We notice that Definition \ref{reg} does not only consider characters  in   the distinguished component $\mathrm X_0(M),$ but also considers characters in $\mathrm X^n(M).$  By \cite[Proposition 3.22]{P}, for characters in $\mathrm X_0(M),$ our definition of the $\boldsymbol\alpha$-regularity is equivalent to \cite[D\'efinition 3.21]{P}.
\end{remark}

It follows that  for any system of simple closed curves $\boldsymbol \alpha$ on $\partial M,$  the $\boldsymbol \alpha$-regular characters are smooth points of $\mathrm X(M);$ and the logarithmic holonomies of $\boldsymbol \alpha$  form a local parametrization of $\mathrm X(M)$ near each of the $\boldsymbol \alpha$-regular characters. Here for a $\mathrm{PSL}(2;\mathbb C)$-character $[\rho],$ the \emph{logarithmic holonomy} of $\alpha_i$ is defined up to sign as the logarithm of the ratio of the eigenvalues of $\rho([\alpha_i]).$

\begin{definition} \label{ATRT}
The \emph{adjoint twisted Reidemeister torsion} of $M$  \emph{with respect to $\boldsymbol\alpha$} is the function 
$$\mathbb T_{(M,\boldsymbol\alpha)}: \mathrm X^n(M)\cap\mathrm X^{\text{irr}}(M)\to\mathbb C/\{\pm 1\}$$ 
defined by
$$\mathbb T_{(M,\boldsymbol\alpha)}([\rho])=\mathrm{Tor}(M, \{\mathbf h^1_{(M,\alpha)},\mathbf h^2_M\};\mathrm{Ad}_\rho)$$
if $\rho$ is $\boldsymbol \alpha$-regular, and by $0$ if otherwise. 
\end{definition}

\begin{theorem}\cite[Theorem 4.1]{P}\label{funT}
Let $M$ be a compact, orientable  $3$-manifold with boundary consisting of $n$ disjoint tori $T_1 \dots,  T_n$ whose interior admits a complete hyperbolic structure with  finite volume. Let  $\mathbb C(\mathrm X^n(M)\cap\mathrm X^{\text{irr}}(M))$ be the ring of rational functions over $\mathrm X^n(M)\cap\mathrm X^{\text{irr}}(M).$ Then
\begin{equation*}
\begin{split}
\mathrm H_1(\partial M;\mathbb Z)&\to \mathbb C(\mathrm X^n(M)\cap\mathrm X^{\text{irr}}(M))\\
\quad\quad\boldsymbol\alpha\quad\ \  &\mapsto \quad\quad\quad\mathbb T_{(M,\boldsymbol\alpha)}
\end{split}
\end{equation*}
 up to sign defines  a  function which is a $\mathbb Z$-multilinear homomorphism with respect to the direct sum $\mathrm H_1(\partial M;\mathbb Z)\cong 
\bigoplus_{i=1}^n\mathrm H_1(T_i;\mathbb Z)$ satisfying the following properties:
\begin{enumerate}[(i)]
\item For a system of simple closed curves $\boldsymbol\alpha$ on $\partial M,$ if the component $\mathrm X_k(M)$ contains an $\boldsymbol\alpha$-regular character, then the support of $\mathbb T_{(M,\boldsymbol\alpha)}$
contains a Zariski-open subset of $\mathrm X_k(M)\cap\mathrm X^{\text{irr}}(M)$ consisting of all the $\boldsymbol\alpha$-regular characters in $\mathrm X_k(M).$ 

\item \emph{(Change of Curves Formula).} Let $\boldsymbol\beta=\{\beta_1,\dots,\beta_n\}$ and $\boldsymbol\gamma=\{\gamma_1,\dots,\gamma_n\}$ be two systems of simple closed curves on $\partial M.$ Let $( u_{\beta_1},\dots,  u_{\beta_n})$ and $( u_{\gamma_1},\dots, u_{\gamma_n})$ respectively be the logarithmic holonomies of the curves in $\boldsymbol\beta$ and $\boldsymbol\gamma.$ Then we have the equality of rational functions
\begin{equation}\label{coc}
\mathbb T_{(M,\boldsymbol\beta)}
=\pm\det\bigg( \frac{\partial ( u_{\beta_1},\dots, u_{\beta_n})}{\partial ( u_{\gamma_1},\dots, u_{\gamma_n})}\bigg)\mathbb T_{(M,\boldsymbol\gamma)}
\end{equation}
on $\mathrm X_k(M)\cap\mathrm X^{\text{irr}}(M)$ for the component $\mathrm X_k(M)$  containing a $\boldsymbol \gamma$-regular character, where $\frac{\partial ( u_{\beta_1},\dots, u_{\beta_n})}{\partial ( u_{\gamma_1},\dots, u_{\gamma_n})}$ is the Jocobian matrix of $( u_{\beta_1},\dots, u_{\beta_n})$ with respect to $( u_{\gamma_1},\dots, u_{\gamma_n}).$

\item \emph{(Surgery Formula).} For $m$ with $0\leqslant m\leqslant n,$ let $\boldsymbol\mu=(\mu_1,\dots,\mu_m)$ be a system of simple closed curves on $\partial M$ such that $\mu_i\subset T_i,$ and let $\boldsymbol\nu=(\nu_{m+1},\dots,\nu_n)$  be a system of simple closed curves on $\partial M$ such that $\nu_j\subset T_j.$ Let $M_{\boldsymbol\mu}$ be a hyperbolic $3$-manifold obtained from $M$ be doing the Dehn-filling along $\boldsymbol\mu.$ Then $\boldsymbol\nu$ can be considered as a system of simple closed curves on $\partial M_{\boldsymbol\mu}.$ Let $[\rho_{\boldsymbol\mu}]\in\mathrm X^{n-m}(M_{\boldsymbol\mu})\cap \mathrm X^{\text{irr}}(M_{\boldsymbol\mu})$ and let $[\rho]\in \mathrm X^n(M)\cap\mathrm X^{\text{irr}}(M)$ be the restriction of $[\rho_{\boldsymbol\mu}]$ on $M.$ Let $( u_{\gamma_1},\dots, u_{\gamma_m})$ be the logarithmic holonomies in $\rho$ of the core curves $\gamma_1,\dots,\gamma_m$ of the solid tori filled in. If $\rho_{\boldsymbol\mu}$ is $\boldsymbol\nu$-regular, then $\rho$ is $\boldsymbol\mu\cup\boldsymbol\nu$-regular, and 
\begin{equation}\label{sf}
\mathbb T_{(M_{\boldsymbol\mu},\boldsymbol\nu)}([\rho_{\boldsymbol\mu}])=\pm\mathbb T_{(M,\boldsymbol\mu\cup\boldsymbol\nu)}([\rho])\prod_{i=1}^m\frac{1}{4\sinh^2\frac{ u_{\gamma_i}}{2}}.
\end{equation}
\end{enumerate}
\end{theorem}

\subsection{Gram matrix function and truncated hyperideal tetrahedra}\label{Gram}

\begin{definition}\label{GMF} Let $\mathrm M_{4\times 4}(\mathbb C)$ be the space of $4\times 4$ matrices with complex entries. The \emph{Gram matrix function} 
$$\mathbb G:\mathbb C^6\to \mathrm M_{4\times 4}(\mathbb C)$$
 is defined for $\boldsymbol z=(z_{12}, z_{13}, z_{14}, z_{23}, z_{24}, z_{34})$ by 
\begin{equation*}
\begin{split}
\mathbb{G}(\boldsymbol z)=\left[
\begin{array}{cccc}
1 & -\cosh z_{12} & -\cosh z_{13} &-\cosh z_{14}\\
-\cosh z_{12}& 1 &-\cosh z_{23} & -\cosh z_{24}\\
-\cosh z_{13} & -\cosh z_{23} & 1 & -\cosh z_{34} \\
-\cosh z_{14} & -\cosh z_{24} & -\cosh z_{34}  & 1 \\
 \end{array}\right].
 \end{split}
\end{equation*}
\end{definition}

The values of $\mathbb G$ at different $\boldsymbol z$ recover the Gram matrices of a truncated hyperideal tetrahedron in the dihedral angles and in the edge lengths. Recall from  \cite{BB, F} that a truncated hyperideal tetrahedron $\Delta$ in $\mathbb H^3$ is a compact convex polyhedron that is diffeomorphic to a truncated tetrahedron in $\mathbb E^3$ with four hexagonal faces $\{H_1, H_2, H_3, H_4\}$ isometric to right-angled hyperbolic hexagons and four triangular faces $\{T_1,T_2,T_3,T_4\}$ isometric to hyperbolic triangles.  We call the four triangular faces the \emph{triangles of truncation}, and call the intersection of two hexagonal faces an \emph{edge} and the angle between these two hexagonal faces the \emph{dihedral angle} at this edge.

\begin{figure}[htbp]
\centering
\includegraphics[scale=0.3]{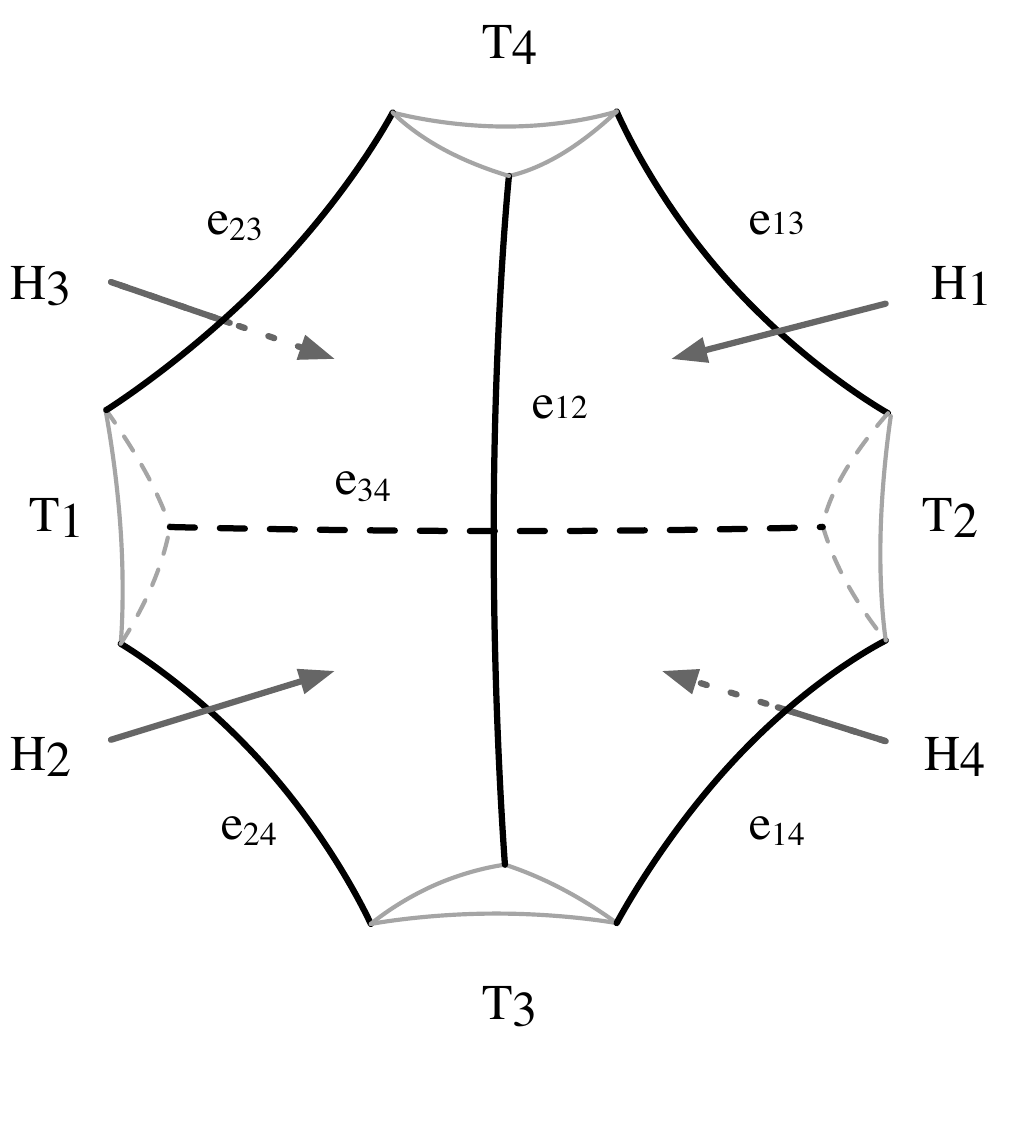}
\caption{}
\label{hyperideal} 
\end{figure}

For $\{i,j\}\subset \{1,2,3,4\},$ as in Figure \ref{hyperideal}, if we let $e_{ij}$ be the edge adjacent to the hexagonal faces $H_i$ and $H_j,$ and let $\alpha_{ij}$ and $l_{ij}$ respectively be the dihedral angle at and the length of $e_{ij},$ 
then the \emph{Gram matrix in the dihedral angles} of $\Delta$  is the  matrix
\begin{equation*}
\begin{split}
\mathrm{G}_{\boldsymbol\alpha} =\left[
\begin{array}{cccc}
1 & -\cos\alpha_{12} & -\cos\alpha_{13} &-\cos\alpha_{14}\\
-\cos\alpha_{12}& 1 &-\cos\alpha_{23} & -\cos\alpha_{24}\\
-\cos\alpha_{13} & -\cos\alpha_{23} & 1 & -\cos\alpha_{34} \\
-\cos\alpha_{14} & -\cos\alpha_{24} & -\cos\alpha_{34}  & 1 \\
 \end{array}\right].
 \end{split}
\end{equation*}
For $k,l\in\{1,2,3,4\},$ let $G_{\boldsymbol\alpha}^{kl}$ be the $kl$-th cofactor of $\mathrm{G}_{\boldsymbol\alpha}.$ Then by the hyperbolic Law of Cosine,  we have 
\begin{equation}\label{cosh}
\cosh l_{ij}=\frac{G_{\boldsymbol\alpha}^{kl}}{\sqrt{G_{\boldsymbol\alpha}^{kk}G_{\boldsymbol\alpha}^{ll}}},
\end{equation}
where $\{k,l\}=\{1,2,3,4\}\setminus\{i,j\}.$

\begin{figure}[htbp]
\centering
\includegraphics[scale=0.3]{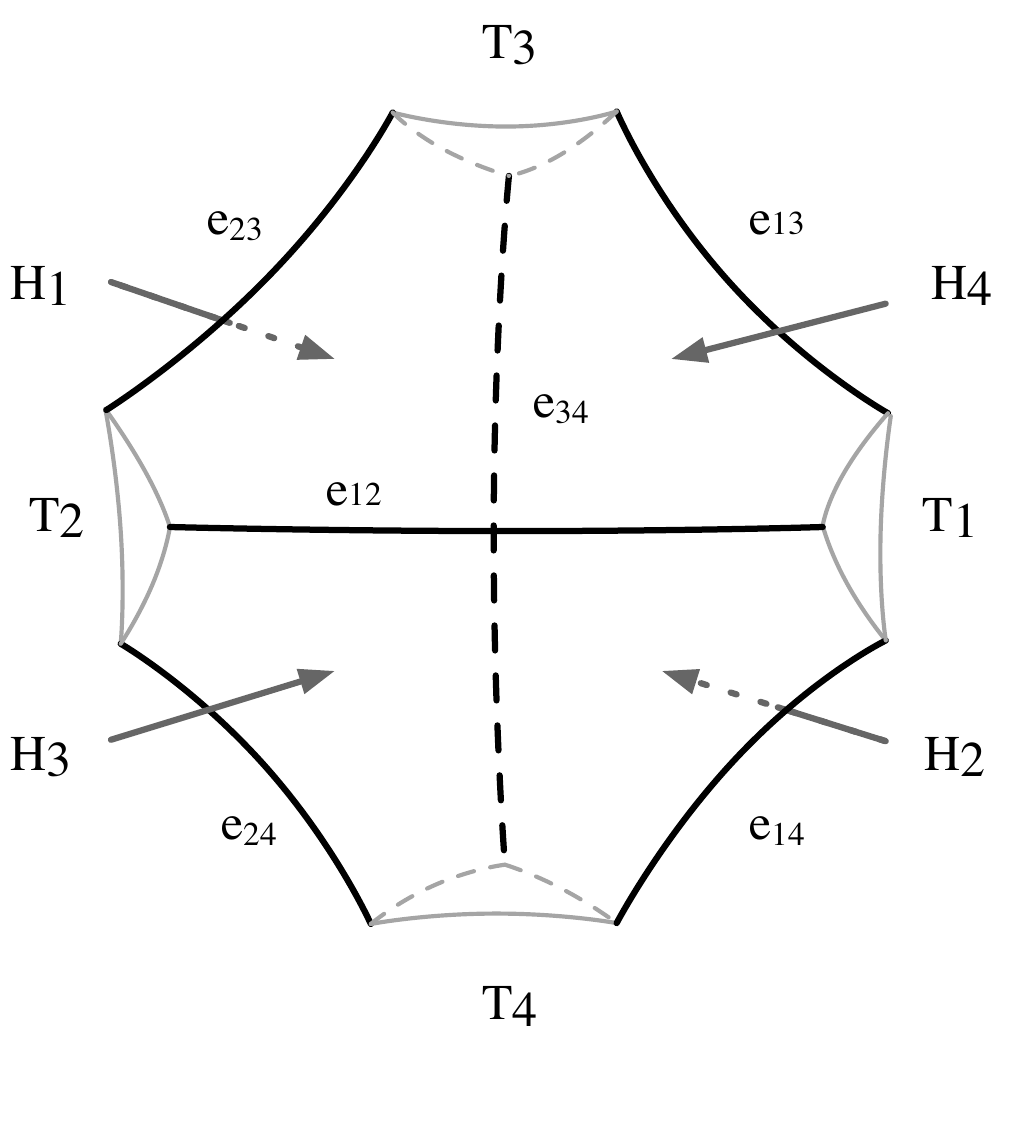}
\caption{}
\label{dualhyperideal} 
\end{figure}

For $\{i,j\}\subset \{1,2,3,4\},$ as in Figure \ref{dualhyperideal}, if we let $e_{ij}$ be the edge connecting the triangles of truncation $T_i$ and $T_j,$ and let $l_{ij}$ and $\alpha_{ij}$ respectively be the length of and the dihedral angle at $e_{ij},$ 
then the \emph{Gram matrix  in the edge lengths}  of $\Delta$ is the matrix
\begin{equation*}
\begin{split}
\mathrm{G}_{\boldsymbol l} =\left[
\begin{array}{cccc}
1 & -\cosh l_{12} & -\cosh l_{13} &-\cosh l_{14}\\
-\cosh l_{12}& 1 &-\cosh l_{23} & -\cosh l_{24}\\
-\cosh l_{13} & -\cosh l_{23} & 1 & -\cosh l_{34} \\
-\cosh l_{14} & -\cosh l_{24} & -\cosh l_{34}  & 1 \\
 \end{array}\right].
 \end{split}
\end{equation*}
For $k,l\in\{1,2,3,4\},$ let $G_{\boldsymbol l}^{kl}$ be the $kl$-th cofactor of $\mathrm{G}_{\boldsymbol l}.$ Then by the hyperbolic Law of Cosine,  we have 
\begin{equation}\label{cos}
\cos \alpha_{ij}=\frac{G_{\boldsymbol l}^{kl}}{\sqrt{G_{\boldsymbol l}^{kk}G_{\boldsymbol l}^{ll}}},
\end{equation}
where $\{k,l\}=\{1,2,3,4\}\setminus\{i,j\}.$ 

We observe that, if $\boldsymbol z=(\mathbf i\alpha_{12}, \mathbf i\alpha_{13}, \mathbf i\alpha_{14},\mathbf  i\alpha_{23},\mathbf  i\alpha_{24},\mathbf  i\alpha_{34}),$ then for $\Delta$ in Figure \ref{hyperideal},
$$\mathbb G(\boldsymbol z)=\mathrm G_{\boldsymbol \alpha};$$
and if $\boldsymbol z=(l_{12}, l_{13},l_{14}, l_{23}, l_{24}, l_{34}),$ then for $\Delta$ in Figure \ref{dualhyperideal},
$$\mathbb G(\boldsymbol z)=\mathrm G_{\boldsymbol l}.$$

\begin{remark} The way of assigning the edges $\{e_{ij}\}$ in the latter case is to consider $\Delta$ as a deeply truncated tetrahedron\,\cite{KM} that $T_1,\dots,T_4$ are the faces and $H_1,\dots,H_4$ are  the faces of truncation. In this way, $e_{ij}$ is the edge adjacent to or connecting the $i$-th and the $j$-th faces. For a general deeply truncated tetrahedron $\Delta,$ when two faces intersect we let $z_{ij}=\pm \mathbf i\alpha_{ij}$ and when two faces are disjoint we let $z_{ij}=\pm l_{ij},$ then $\mathbb G(\boldsymbol z)$ coincides with the Gram matrix of the deeply truncated tetrahedron $\Delta.$ See \cite[Section 2.1]{BY} for more details.
\end{remark}


\subsection{Fundamental shadow link complements}\label{fsl}

In this section we recall the construction and basic properties of the fundamental shadow link complements. The building blocks for a fundamental shadow link complement are truncated tetrahedra as on the left of Figure \ref{bb}. If we take $d$ building blocks $\Delta_1,\dots, \Delta_d$ and glue them together along the triangles of truncation, then we obtain a (possibly non-orientable) handlebody of genus $d+1$ with a link on its boundary consisting of the edges of the building blocks, such as the right of Figure \ref{bb}. By taking the orientable double (the orientable double
covering with the boundary quotient out by the deck involution) of this handlebody, we obtain a link $L_{\text{FSL}}$ inside $\#^{d+1}(S^2\times S^1).$ We call a link obtained this way a fundamental shadow link, and its complement $M=\#^{d+1}(S^2\times S^1)\setminus L_{\text{FSL}} $ a \emph{fundamental shadow link complement}. The fundamental shadow link complements form a universal family of $3$-manifolds in the following sense.

\begin{figure}[htbp]
\centering
\includegraphics[scale=0.25]{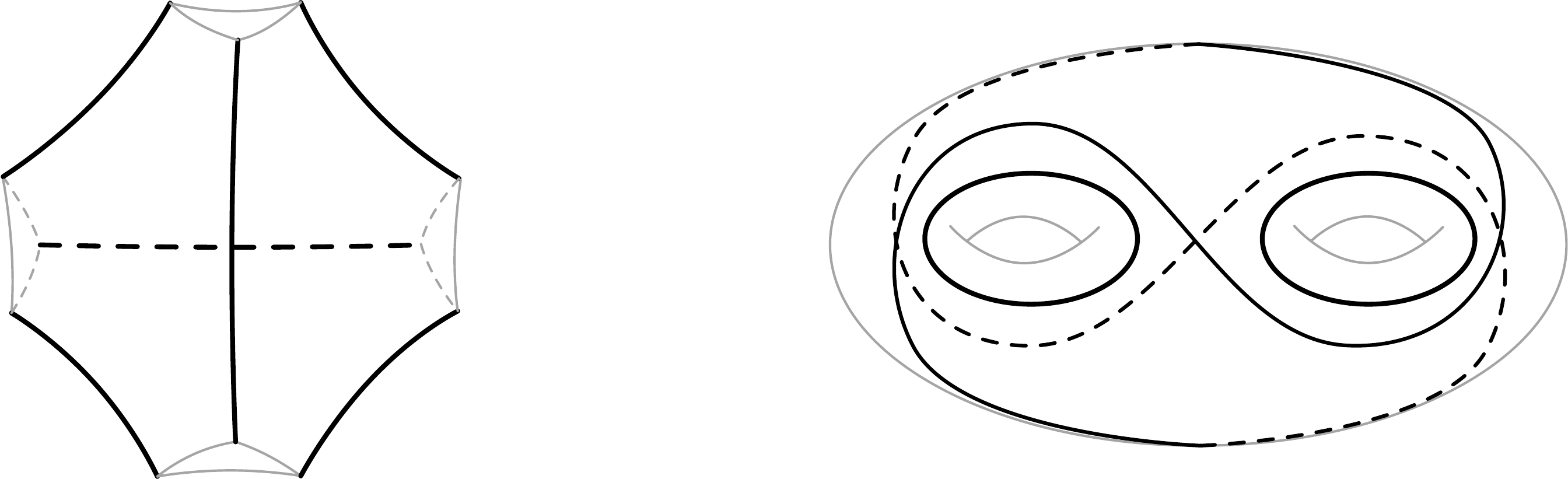}
\caption{The handlebody on the right is obtained from the truncated tetrahedron on the left by identifying the triangles on the top and the bottom by a horizontal reflection and the triangles on the left and the right by a vertical reflection.}
\label{bb}
\end{figure}

\begin{theorem}[\cite{CT}]\label{CT} Any compact orientable $3$-manifold with empty or toroidal boundary can be obtained from a fundamental shadow link complement by doing an integral Dehn-filling along suitable boundary components.
\end{theorem}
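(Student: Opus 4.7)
The plan is to apply Turaev's theory of shadows. Every compact oriented $3$-manifold $N$ with toroidal or empty boundary can be presented as the boundary of a $4$-dimensional $2$-handlebody whose spine is a simple polyhedron $X$ equipped with half-integer gleams on its $2$-regions; this is Turaev's existence theorem for shadows of $3$-manifolds. Passing through shadows is the natural framework because the fundamental shadow link complements of Section \ref{fsl} are themselves described as boundaries of very particular $2$-handlebodies, so the task reduces to recognising an arbitrary shadow as one of this preferred type, up to specifying the integer framings that will become Dehn-filling slopes.

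First, I would standardize $X$ via Matveev-type shadow moves into a simple polyhedron $X'$ in which every true vertex is $4$-valent and locally modelled on the cone over the $1$-skeleton of a tetrahedron. Every vertex of $X'$ then has a regular neighbourhood in the associated $4$-manifold equal to a truncated tetrahedron, whose four hexagonal faces are glued in pairs to neighbouring truncated tetrahedra along edges of $X'$, and whose four triangles of truncation lie on the free boundary. Writing $d$ for the number of vertices, the resulting $4$-manifold with all gleams set to zero is exactly the handlebody assembled from $d$ truncated tetrahedra as in Section \ref{fsl}, and, after taking the orientable double around the triangles of truncation to kill any local non-orientability, its boundary is the fundamental shadow link complement $M=\#^{d+1}(S^2\times S^1)\setminus L_{\text{FSL}}$.

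Second, I would turn the gleams back on and track how they act on the boundary. By the standard dictionary between shadows and Kirby calculus, assigning a gleam $g_e$ to the $2$-region dual to an edge $e$ of the truncated tetrahedra corresponds to performing $g_e$-framed Dehn surgery along the associated component $L_e$ of $L_{\text{FSL}}$. A further application of appropriate shadow moves (the $2\to 3$ move together with gleam-splitting) allows us to arrange all gleams to be integers at the cost of increasing the number $d$ of vertices, so the required surgeries can be taken to be integral Dehn-fillings. The toroidal boundary components of $N,$ if any, correspond exactly to those components of $L_{\text{FSL}}$ on which no filling is performed, which one sets up from the start by choosing $X$ so that its free boundary contains $\partial N.$

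The main obstacle lies in the standardization step: producing the polyhedron $X'$ from an arbitrary shadow, and correctly tracking the gleams through each Matveev move, is the substantive geometric content of the argument, and it is here that one must also verify that any pre-existing toroidal boundary of $N$ survives the entire process as an unfilled boundary component of $M.$ Once the standardization is in place, all remaining steps are essentially local combinatorial bookkeeping and the translation between framings in the Kirby-type description and slopes for the Dehn-filling on $M.$
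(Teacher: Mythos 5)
This theorem is not proved in the paper at all: it is imported verbatim from Costantino--Thurston \cite{CT}, so there is no internal argument to compare against. Your sketch does follow the same route as the actual proof in \cite{CT} --- present $N$ as the boundary of a $4$-dimensional thickening of a gleamed simple polyhedron, identify the thickening of a neighbourhood of the singular set (all gleams zero) with the $2$-handlebody whose boundary is $\#^{d+1}(S^2\times S^1)\setminus L_{\text{FSL}}$, and interpret capping the internal regions with their gleams as Dehn filling along the corresponding components of $L_{\text{FSL}}$.

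As a proof, however, the sketch has one genuine gap and one imprecision. The gap is the integrality of the gleams: Turaev's reconstruction forces the gleam of an internal region to be a half-integer congruent mod $1$ to the $\mathbb Z/2$-gleam determined by how the region sits around the vertices, so half-integral gleams genuinely occur and would produce non-integral fillings. Your appeal to ``the $2\to 3$ move together with gleam-splitting'' asserts that this can always be repaired, but this is precisely where \cite{CT} does real work (modifying the polyhedron near vertices to shift the $\mathbb Z/2$-gleams), and without carrying that out the statement ``integral Dehn-filling'' is not established. The imprecision: a regular neighbourhood of a vertex in the $4$-manifold is a $4$-ball, not a truncated tetrahedron; the truncated tetrahedra (more precisely their doubles, the $D$-blocks of Section \ref{fsl}) appear in the $3$-dimensional boundary of that neighbourhood. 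Also note that every vertex of a simple polyhedron is already locally the cone over the $1$-skeleton of a tetrahedron, so your ``standardization of vertices'' step is vacuous; the substantive standardization is making the internal regions discs and, again, normalizing the gleams.
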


A hyperbolic cone metric on $\#^{d+1}(S^2\times S^1)$ with singular locus $L_{\text{FSL}}$ and with cone angles $2\alpha_1,$ $\dots,$ $2\alpha_n$ can be constructed as follows. For each $k\in \{1,\dots, d\},$ let $e_{k_1},\dots,e_{k_6}$ be the edges of the building block $\Delta_k,$ and let $2\alpha_{k_i}$ be the cone angle of the component of $L$ containing $e_{k_i}.$ Suppose  $\{{\alpha_{k_1},\dots,\alpha_{k_6}}\big\}$ form the set of dihedral angles of a truncated hyperideal tetrahedron, by abuse of notation still denoted by $\Delta_k.$ Then the hyperbolic metric on $M$ whose completion has singular locus $L_{\text{FSL}}$ with cone angles $2\alpha_1,\dots,2\alpha_n$ at the components is obtained by glueing the truncated hyperideal tetrahedra $\Delta_k$'s together along isometries between pairs of the triangles of truncation, then taking the orientable double. In this metric, the logarithmic holonomy  of the meridian  of a tubular neighborhood  of the $i$-th component of $L_{\text{FSL}}$ equals  $2\mathbf i\alpha_i.$ We also notice that when all the truncated hyperideal tetrahedra have edge lengths equal to zero, ie, are the regular ideal hyperbolic octahedra, we obtain the complete hyperbolic structure on $M.$

For the purpose of computing the adjoint twisted Reidemeister torsion, we need the following alternative construction of the fundamental shadow link complements. The idea is that, instead of gluing the truncated tetrahedra together along the triangles of  truncation first and then taking the orientable double, we  take the double of each tetrahedron first along the hexagonal faces and then glue the resulting pieces together along the pairs of the double of the triangles of truncation. To be precise, for each $\Delta_k,$ $k\in\{1,\dots,d\},$ we let $D_k$ be the union of $\Delta_k$ with its mirror image via the identity map between the four hexagonal faces and with the six edges  removed. In the language of \cite{C}, $D_k$ is a \emph{D-block}. The boundary of each $D_k$ is a union of four $3$-puncture spheres (coming from the double of the four triangles of truncation) and six cylinders (coming from the boundary of a tubular neighborhood of the edges). We glue these $D$-blocks together via orientation reversing homeomorphisms between pairs of $3$-puncture spheres part of the boundary, which send a triangle of truncation in one $3$-puncture sphere to a triangle of truncation in the other $3$-puncture sphere.  The quotient space is a fundamental shadow link complement, and this construction could be considered as a $3$-dimensional analog of the pair of pants decomposition of surfaces. 

We call the double of a truncated hyperideal tetrahedra a \emph{hyperbolic $D$-block}. Then a hyperbolic cone metric on $M$ can alternatively be constructed as by gluing the hyperbolic $D$-blocks together via orientation reversing isometries between the hyperbolic $3$-puncture spheres (double of the hyperbolic triangles of truncation with the three cone singularities removed) which preserve the hyperbolic triangles.

\subsection{Double of hyperbolic polyhedral $3$-manifolds}\label{dhp}

Dual to the construction of a fundamental shadow link complement is the construction of the double of a hyperbolic polyhedral $3$-manifold. As defined in \cite{Luo, LY}, a \emph{hyperbolic polyhedral $3$-manifold} $N$ is obtained from $d$  truncated hyperideal tetrahedra $\Delta_1,\dots,\Delta_d$ glued together via isometries between pairs of the hexagonal faces. The \emph{cone angle} at an edge is the sum of the dihedral angles of the truncated hyperideal tetrahedra around the edge. If all the cone angles are equal to $2\pi,$ then $N$ admits a hyperbolic metric with totally geodesic boundary and a geometric triangulation given by $\Delta_1,\dots,\Delta_d.$ It is proved in \cite[Theorem 1.2 (b)]{LY} that hyperbolic polyhedral $3$-manifolds are rigid in the sense that they are up to isometry determined and infinitesimally determined by their cone angles. 

To construct the double of $N,$ we can also take the double of each tetrahedron first along the triangles of truncation and then glue the resulting pieces together. To be precise, for each truncated tetrahedron $\Delta_k,$ $k\in\{1,\dots,d\},$ we let $D_k$ be the union of $\Delta_k$ with its mirror image via the identity map between the four triangles of truncation and with the double of the six edges removed. This is dual to the $D$-block in Section \ref{fsl}, hence we call it a \emph{dual $D$-block}. The boundary of each $D_k$ is a union of four $3$-hole spheres (coming from the double of the four hexagonal faces) and six cylinders (coming from the double of the boundary of a  tubular neighborhood of the edges). We then glue these dual $D$-blocks together via orientation reversing homeomorphisms between pairs of $3$-hole spheres, and the quotient space $M$ is the double of $N$ with the double of the edges removed. If we fill the double of edges back in, topologically we get the the double $\overline M$ of $N.$ 

Geometrically, if we let each truncated tetrahedron $\Delta_k$ be a truncated hyperideal tetrahedron, then the four $3$-hole spheres are hyperbolic $3$-hole spheres with geodesic boundary. If we require the gluing map between these hyperbolic $3$-hole spheres to be isometries, then the quotient space is the double $\overline M$  of the hyperbolic polyhedral $3$-manifold $N,$ and $M$ is obtained from $\overline M$ by removing all the double of the edges.

For $i\in\{1,\dots,n\},$ let $l_i$ be the length of the edge $e_i$ of the hyperbolic polyhedral manifold $N.$ Since $M$ comes from doubling, we can choose a \emph{preferred longitude} on the boundary of a tubular neighborhood of the double of $e_i$ (by isotoping $e_i$ into $\Delta_k$ and then doubling) whose logarithmic holonomy equals $2l_i.$


\section{Adjoint twisted Reidemeister torsion of the pairs of pants}\label{TP}

Let $P$ be a pair of pants with oriented boundary components $\gamma_1,$ $\gamma_2$ and $\gamma_3.$ Then $\pi_1(P)$  is a free group of rank $2$ generated by $[\gamma_{1}]$ and $[\gamma_{2}].$ By \cite{FK, G1}, the $\mathrm{SL}(2;\mathbb C)$-character variety of $P$ is homeomorphic to  $\mathbb C^3$ parametrized by the traces of the image of  $[\gamma_1],$ $[\gamma_2]$ and $[\gamma_3];$ and a representation  $\widetilde \rho:\pi_1(P)\to\mathrm{SL}(2;\mathbb C)$ is irreducible if and only if 
$$f_P\big(
\mathrm {Tr}\widetilde\rho([\gamma_1]),
\mathrm {Tr}\widetilde\rho([\gamma_2]),
\mathrm {Tr}\widetilde\rho([\gamma_3])\big)\neq 0,$$
where $f_P$ is the polynomial
$$f_P(x,y,z)=x^2+y^2+z^2-xyz-4.$$

 The \emph{logarithmic holonomies}  of $(\gamma_1,\gamma_2,\gamma_3)$ in a representation $\widetilde \rho:\pi_1(P)\to\mathrm{SL}(2;\mathbb C)$ are up to sign the complex numbers $(u_1,u_2,u_3)$ satisfying 
$$\big(
\mathrm {Tr}\widetilde\rho([\gamma_1]),
\mathrm {Tr}\widetilde\rho([\gamma_2]),
\mathrm {Tr}\widetilde\rho([\gamma_3])\big)=\Big(-2\cosh\frac{u_1}{2},-2\cosh\frac{u_2}{2},-2\cosh\frac{u_3}{2}\Big).$$
In this way, if $\rho_0:\pi_1(P)\to\mathrm{PSL}(2;\mathbb C)$ is the holonomy representation of the complete hyperbolic structure on $P$ and $\widetilde \rho_0:\pi_1(P)\to\mathrm{SL}(2;\mathbb C)$ is the lifting of $\rho_0$ with
 $$\big(
\mathrm {Tr}\widetilde\rho_0([\gamma_1]),
\mathrm {Tr}\widetilde\rho_0([\gamma_2]),
\mathrm {Tr}\widetilde\rho_0([\gamma_3])\big)=(-2,-2,-2),$$ then the logarithmic holonomies of $(\gamma_1,\gamma_2,\gamma_3)$ in $\widetilde\rho_0$ are $(0,0,0).$  The \emph{Gram matrix} of $\widetilde \rho$ is defined as
$$\mathbb G=\left[
\begin{array}{cccc}
1 & -\cosh\frac{u_3}{2}& -\cosh\frac{u_2}{2} \\
-\cosh\frac{u_3}{2}& 1 & -\cosh\frac{u_1}{2}\\
-\cosh\frac{u_2}{2} & -\cosh\frac{u_1}{2}  & 1  \\
 \end{array}\right].$$
 Then 
 $$f_P\big(
\mathrm {Tr}\widetilde\rho([\gamma_1]),
\mathrm {Tr}\widetilde\rho([\gamma_2]),
\mathrm {Tr}\widetilde\rho([\gamma_3])\big )=-4\det\mathbb G,$$
 and $\widetilde\rho$ is irreducible if and only if $\det\mathbb G\neq 0.$

Since $\pi_1(P)$ is a free group, every $\mathrm{PSL}(2;\mathbb C)$-representation of it lifts to  an $\mathrm{SL}(2;\mathbb C)$-representation. Hence the  $\mathrm{SL}(2;\mathbb C)$-character variety of $P$ is a branched cover of the $\mathrm{PSL}(2;\mathbb C)$-character variety of $P,$ and the latter is an irreducible algebraic variety.  For a representation $\rho:\pi_1(P)\to\mathrm{PSL}(2;\mathbb C),$ we defined the logarithmic holonomies $(u_1,u_2,u_3)$ and the Gram matrix $\mathbb G$ of $\rho$ as those of a lifting $\widetilde \rho:\pi_1(P)\to\mathrm{SL}(2;\mathbb C)$ of $\rho.$  Notice that the logarithmic holonomies depend on the choice of the liftings of $\rho,$ and a different lifting   will change $\mathbb G$ by multiplying some rows and the corresponding columns by $-1$ at the same time, which does not change its determinant. Therefore, the \emph{determinant of the Gram matrix} $\det\mathbb G$ is independent of the choice of the liftings, and  is a well defined quantity of $\rho.$

For a representation $\rho:\pi_1(P)\to \mathrm{PSL}(2;\mathbb C),$  let $\mathrm{Ad}_\rho =\mathrm{Ad}\circ\rho:\pi_1(P)\to \mathrm{SL}(3;\mathbb C )$ be its adjoint representation. Since both $\mathrm {Ad}$ and $\mathrm{Sym}^2$ are $3$-dimensional irreducible representations of $\mathrm {SL}(2;\mathbb C),$ they are equivalent by the Classification Theorem of finite dimensional irreducible representations of $\mathrm{SL}(2;\mathbb C).$ In the rest of this paper, we will use the representation $\mathrm{Sym}^2\circ\widetilde \rho$ to do all the computations where $\widetilde \rho$ is a lifting of $\rho$ to a representation into $\mathrm{SL}(2;\mathbb C);$ and to simplify the notation still denote it by $\mathrm{Ad}_\rho.$ Notice that composing with $\mathrm{Sym}^2,$ the signs $\pm$ in front of the matrices will disappear and hence $\mathrm{Sym}^2\circ\widetilde \rho$ is independent of the choice of the lifting $\widetilde \rho.$

In addition, assume for each $i\in\{1,2,3\}$ that $\rho([\gamma_i])\neq\pm I.$ Then up to sign we can canonically choose an invariant vector $\mathbf I_i$ of $\mathrm {Ad}_\rho([\gamma_i])^T$ as follows. Since $\rho([\gamma_i])$ is not the identity element in $\mathrm{PSL}(2;\mathbb C),$ there is  up to scalar a unique  invariant vector of $\mathrm {Ad}_\rho([\gamma_i])^T.$ To determine the scalar we consider the following Killing bilinear form 
$\kappa$ on the Lie algebra $\mathfrak{psl}(2;\mathbb C)\cong\mathbb C^3$ defined by 
\begin{equation}\label{Killing}
\kappa\Bigg(\begin{bmatrix}
a_1\\
a_2\\
a_3\\
 \end{bmatrix},
 \begin{bmatrix}
b_1\\
b_2\\
b_3\\
 \end{bmatrix}\Bigg)
=-2a_1b_3+a_2b_2-2a_3b_1,
\end{equation}
which is up to scalar the unique $\mathrm{Ad}$-invariant bilinear form on $\mathfrak{psl}(2;\mathbb C).$ Then in the case that  $\rho([\gamma_i])$ is not a parabolic element, we let $\mathbf I_i$ be up to sign the unique $\mathrm {Ad}_\rho([\gamma_i])^T$-invariant vector satisfying $\kappa(\mathbf I_i,\mathbf I_i)=1.$


\begin{definition}\label{Preg} Let $\boldsymbol\gamma=(\gamma_1,\gamma_2,\gamma_3).$ An irreducible representation $\rho:\pi_1(P)\to\mathrm{PSL}(2;\mathbb C)$ is \emph{$\boldsymbol\gamma$-regular} if 
 $$\mathbf h_P=\{\mathbf I_1\otimes [\gamma_1], \mathbf I_2\otimes[\gamma_2], \mathbf I_3\otimes[\gamma_3]\}$$
is a basis of $\mathrm{H}_1(P;\mathrm{Ad}_\rho),$ where $[\gamma_i]$ is the homology class of $\gamma_i$ in $\mathrm H_1(P;\mathbb Z),$ $i\in\{1,2,3\}.$
\end{definition}

Let $\mathrm X(P)$ be the $\mathrm{PSL}(2;\mathbb C)$-character variety of $P.$ A character $[\rho]\in\mathrm X(P)$ is  \emph{$\boldsymbol \gamma$-regular } if $\rho$ is a $\boldsymbol \gamma$-regular representation. Since $\pi_1(P)$ is a free group, an  Euler characteristic counting argument shows that if $[\rho]$ is $\boldsymbol\gamma$-regular, then $\mathrm{H}_k(P;\mathrm{Ad}_\rho)=0$ for $k\neq 1.$

The main result of this section is the following

\begin{proposition}\label{P} Let $\rho:\pi_1(P)\to\mathrm{PSL}(2;\mathbb C)$ be a $\boldsymbol\gamma$-regular representation, and 
for $i\in\{1,2,3\}$ let $ u_i$ be up to sign the logarithmic holonomy of $\gamma_i$ in $\rho.$
Then
$$\mathrm{Tor}(P, \mathbf h_P; \mathrm{Ad}_\rho)=\pm\frac{1}{16\sinh\frac{ u_1}{2}\sinh\frac{ u_2}{2}\sinh\frac{ u_3}{2}}.$$
\end{proposition}

\begin{remark} Proposition \ref{P} could be proved from other results in the literature, which is related to volume forms on the character variety, see \cite{W, BL, HP}. We include a different proof here for the readers' convenience and as a warm up for the computations in the next section. 
\end{remark}

To prove Proposition \ref{P}, we need the following Lemma.

\begin{lemma}\label{LI} 
The set of $\boldsymbol \gamma$-regular characters contains a Zariski-open subset $\mathrm Z(P)$ of $\mathrm X(P)$  consisting of the characters $[\rho]$ satisfying the following two conditions:
\begin{enumerate}[(1)]
\item $$\det[\mathbf I_1,\mathbf I_2,\mathbf I_3]\neq 0,$$ and
 \item  
 $$\det\Big[\mathbf I_1-\mathrm{Ad}_\rho ([\gamma_3]^{-1})^T\cdot\mathbf I_1,\ \mathbf I_2-\mathrm{Ad}_\rho ([\gamma_3]^{-1})^T\cdot\mathbf I_2,\ \mathbf I_3-\mathrm{Ad}_\rho ([\gamma_2])^T\cdot\mathbf I_3\Big]\neq 0.$$
\end{enumerate}
\end{lemma}

\begin{proof} Let us compute the determinants in conditions (1) and (2) first. We will do the computations for the holonomy representation of a hyperbolic metric on $P$ with cone singularities around $\gamma_1,$ $\gamma_2$ and $\gamma_3$ first. Then by analyticity the computation extends to the other representations. We would like to mention here that these computations  will also hold the key to the proof of Proposition \ref{P}.

Let $P$ be a hyperbolic $2$-sphere with three cone singularities $p_1,$ $p_2$ and $p_3$ removed. We let  the cone angles at $p_1,$ $p_2$ and $p_3$ respectively be $2\alpha_1,$ $2\alpha_2$ and $2\alpha_3$ all of which are less than $2\pi,$ and let $\gamma_1,$ $\gamma_2$ and $\gamma_3$  respectively  be the simple loops around $p_1,$ $p_2$ and $p_3.$ In this case we have $ u_i=\pm 2\mathbf i\alpha_i.$

Now $\rho([\gamma_i])^T$ is a rotation, hence has an eigenvector $\mathbf v_i^+$ of eigenvalue $e^{\mathbf i\alpha_i}$ and an eigenvector $\mathbf v_i^-$ of eigenvalue $e^{-\mathbf i\alpha_i}.$ If
\begin{equation*}
\mathbf v_i^+=\left[
\begin{array}{c}
a \\
 b\\
 \end{array}\right]
 \quad \text{and} \quad
 \mathbf v_i^-=\left[
\begin{array}{c}
c \\
d\\
 \end{array}\right],
\end{equation*}
then an invariant vector of $\mathrm {Ad}_\rho([\gamma_i])^T$ has the form
\begin{equation}\label{invformula}
\left[
\begin{array}{c}
ac \\
 ad+bc \\
 bd\\
 \end{array}\right].
\end{equation}
Indeed, if we identify $[a,b]^T$ with the polynomial $aX+bY$ and $[c,d]^T$ with $cX+dY,$ then the polynomial $(aX+bY)(cX+dY)=acX^2+(ad+bc)XY+bdY^2$ is invariant under $\big(\mathrm{Sym}^2\circ \widetilde\rho([\gamma_i])\big)^T.$ 

Now $P$ is the double of a hyperbolic triangle $T$ with cone angles $\alpha_1,$ $\alpha_2$ and $\alpha_3.$ For $i=1,2,3,$ let $e_i$ be the edge of $T$ opposite to $p_i$ and let $s_i$ be its lengths. To compute its holonomy representation $\rho,$ we isometrically embedded $T$ into $\mathbb H^3$ as follows. As in Figure \ref{triangle}, we place $p_1$ at $(0,0,1),$ the edge $e_2$ in the $xz$-plane and $T$ in the unit  hemisphere centered at $(0,0,0)$ such that the $y$-coordinate of $p_2$ is negative. 
This could always be done by replacing $T$ by its mirror image  if necessary.

\begin{figure}[htbp]
\centering
\includegraphics[scale=0.5]{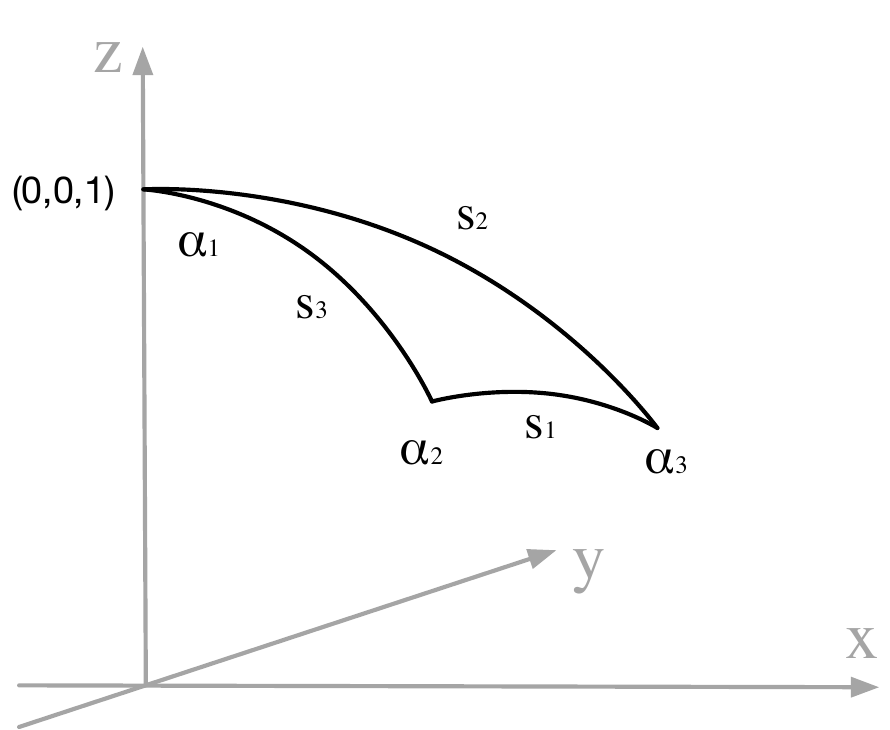}
\caption{}
\label{triangle} 
\end{figure}

To simplify the notation, we for any $z\in \mathbb C$ let 
\begin{equation*}
D_z=\left[
\begin{array}{cc}
e^{\frac{z}{2}}& 0\\
 0 &  e^{-\frac{z}{2}}\\
 \end{array}\right]
\end{equation*}
and for $i=1,2,3,$ let 
\begin{equation*}
S_i=\left[
\begin{array}{cc}
\cosh{\frac{s_i}{2}}& \sinh{\frac{s_i}{2}} \\
\sinh{\frac{s_i}{2}} & \cosh{\frac{s_i}{2}}\\
 \end{array}\right].
\end{equation*}

Suppose for each $i,$ $\gamma_i$ goes counterclockwise around $p_i.$ Then by conjugating the tangent framings at $p_2$ and $p_3$ back to $p_1=(0,0,1)$ and conjugating the tangent vectors of the axes of the rotations to $\frac{\partial}{\partial z},$ we have 
\begin{equation*}
\begin{split}
\rho([\gamma_1])&=\pm D_{2\mathbf i\alpha_1},\\
\rho([\gamma_2])&=\pm D_{\mathbf i\alpha_1}^{-1}S_3D_{2\mathbf i\alpha_2}S_3^{-1}D_{i\alpha_1}=\pm S_2D_{-\mathbf i\alpha_3}^{-1}S_1^{-1}D_{2\mathbf i\alpha_2}S_1D_{-\mathbf i\alpha_3}S_2^{-1},\\
\rho([\gamma_3])&=\pm S_2D_{2\mathbf i\alpha_3}S_2^{-1}=\pm D_{\mathbf i\alpha_1}^{-1}S_3D_{\mathbf i\alpha_2}^{-1}S_1^{-1}D_{2\mathbf i\alpha_3}  S_1D_{\mathbf i\alpha_2}S_3^{-1}D_{\mathbf i\alpha_1}.
\end{split}
\end{equation*}
Here we compute $\rho([\gamma_2])$ and $\rho([\gamma_3])$ in two ways for the purpose of computing different things later. Since both $D_z$ and $S_i$ are symmetric matrices, we have
\begin{equation}\label{holo}
\begin{split}
\rho([\gamma_1])^T&=\pm D_{2\mathbf i\alpha_1},\\
\rho([\gamma_2])^T&=\pm D_{\mathbf i\alpha_1}S_3^{-1}D_{2\mathbf i\alpha_2}S_3D_{\mathbf i\alpha_1}^{-1}=\pm S_2^{-1}D_{-\mathbf i\alpha_3}S_1D_{2\mathbf i\alpha_2}S_1^{-1}D_{-\mathbf i\alpha_3}^{-1}S_2,\\
\rho([\gamma_3])^T&=\pm S_2^{-1}D_{2\mathbf i\alpha_3}S_2=\pm D_{\mathbf i\alpha_1}S_3^{-1}D_{\mathbf i\alpha_2}S_1D_{2\mathbf i\alpha_3}  S_1^{-1}D_{\mathbf i\alpha_2}^{-1}S_3D_{\mathbf i\alpha_1}^{-1}.
\end{split}
\end{equation}
Then 
\begin{equation}\label{eigen}
\begin{split}
[\mathbf v_1^+,\mathbf v_1^-]&=I,\\
[\mathbf v_2^+,\mathbf v_2^-]&=D_{\mathbf i\alpha_1}S_3^{-1}=S_2^{-1}D_{-\mathbf i\alpha_3}S_1,\\
[\mathbf v_3^+,\mathbf v_3^-]&=S_2^{-1}=D_{\mathbf i\alpha_1}S_3^{-1}D_{\mathbf i\alpha_2}S_1.
\end{split}
\end{equation}
Using the first half of the second and third equations of (\ref{eigen}), (\ref{invformula}) and a direct computation, we have
\begin{equation}\label{linv}
\mathbf I_1=\left[
\begin{array}{c}
0 \\
1 \\
0\\
 \end{array}\right],
 \quad
 \mathbf I_2=\left[
\begin{array}{c}
-\frac{1}{2}e^{\mathbf i\alpha_1}\sinh s_3\\
\cosh s_3 \\
-\frac{1}{2}e^{-\mathbf i\alpha_1}\sinh s_3\\
 \end{array}\right]
 \quad\text{and}\quad
  \mathbf I_3=\left[
\begin{array}{c}
-\frac{1}{2}\sinh s_2\\
\cosh s_2 \\
-\frac{1}{2}\sinh s_2\\
 \end{array}\right].
\end{equation}
Since $\kappa(\mathbf I_i,\mathbf I_i)=1$ for $i\in\{1,2,3\},$ they are the canonical invariant vectors. 
Therefore
\begin{equation}\label{det}
\det[\mathbf I_1,\mathbf I_2,\mathbf I_3]=-\frac{\mathbf i}{2}\sin\alpha_1\sinh s_2\sinh s_3.
\end{equation}
This computes the determinant in (1) for the holonomy representation of a hyperbolic structure with cone singularities.

To compute the determinant in (2), we need the following auxiliary computations. For real numbers $x$ and $y,$ we let 
\begin{equation*}
X=\left[
\begin{array}{cc}
\cosh{\frac{x}{2}}& \sinh{\frac{x}{2}} \\
\sinh{\frac{x}{2}} & \cosh{\frac{x}{2}}\\
 \end{array}\right]
 \quad\text{and}\quad
Y=\left[
\begin{array}{cc}
\cosh{\frac{y}{2}}& \sinh{\frac{y}{2}} \\
\sinh{\frac{y}{2}} & \cosh{\frac{y}{2}}\\
 \end{array}\right],
\end{equation*}
and for a complex number $z$ let $D_z$ be as before. We let 
\begin{equation*}
\mathbf I_{xy}^z=\left[
\begin{array}{c}
ac \\
 ad+bc \\
 bd\\
 \end{array}\right]\quad\text{if}\quad
X^{-1}D_zY=\pm\left[
\begin{array}{cc}
a & c\\
 b & d\\
 \end{array}\right];
\end{equation*}
and let
\begin{equation*}
\mathbf I_{wxy}^z=\left[
\begin{array}{c}
ac \\
 ad+bc \\
 bd\\
 \end{array}\right]\quad\text{if}\quad
D_wX^{-1}D_zY=\pm\left[
\begin{array}{cc}
a & c\\
 b & d\\
 \end{array}\right].
\end{equation*}
We notice that $\mathbf I_{xy}^z$ and $\mathbf I_{wxy}^z$ are independent of the signs $\pm$ in front of the $2\times 2$ matrices. Then using the hyperbolic trigonometric identities $\cosh z-\cosh z'=2\sinh\frac{z+z'}{2}\sinh\frac{z-z'}{2}$ and $\sinh z-\sinh z'=2\cosh\frac{z+z'}{2}\sinh\frac{z-z'}{2}$ for any complex numbers $z$ and $z'$ and a direct computation, we have 
\begin{equation}\label{difference1}
\mathbf I_{xy}^z-\mathbf I_{xy}^{z'}=\sinh y\sinh\frac{z-z'}{2}\left[
\begin{array}{c}
\sinh\frac{z+z'}{2}\cosh x+\cosh\frac{z+z'}{2} \\
-2\sinh\frac{z+z'}{2}\sinh x \\
\sinh\frac{z+z'}{2}\cosh x-\cosh\frac{z+z'}{2} \\
 \end{array}\right],
\end{equation}
and
\begin{equation}\label{difference2}
\mathbf I_{wxy}^z-\mathbf I_{wxy}^{z'}=\sinh y\sinh\frac{z-z'}{2}\left[
\begin{array}{c}
e^w\big(\sinh\frac{z+z'}{2}\cosh x+\cosh\frac{z+z'}{2}\big)\\
-2\sinh\frac{z+z'}{2}\sinh x \\
e^{-w}\big(\sinh\frac{z+z'}{2}\cosh x-\cosh\frac{z+z'}{2}\big) \\
 \end{array}\right].
\end{equation}

By the first half of the third equation of (\ref{holo}), we have 
\begin{equation}\label{3}
\rho([\gamma_3]^{-1})^T=\pm S_2^{-1}D_{-2\mathbf i\alpha_3}S_2.
\end{equation}
By (\ref{3}) and the first equation of (\ref{eigen}), we have 
$$[\mathbf v_1^+,\mathbf v_1^-]=I=S_2^{-1}D_0S_2$$
and
$$\rho([\gamma_3]^{-1})^T\cdot [\mathbf v_1^+,\mathbf v_1^-]=\pm S_2^{-1}D_{-2\mathbf i\alpha_3}S_2.$$
Therefore, by (\ref{difference1})
\begin{equation*}
\begin{split}
\mathbf I_1-\mathrm{Ad}_\rho ([\gamma_3]^{-1})^T\cdot\mathbf I_1&=\mathbf I_{s_2s_2}^0-\mathbf I_{s_2s_2}^{-2\mathbf i\alpha_3}\\
&=\mathbf i\sinh s_2\sin \alpha_3\left[
\begin{array}{c}
-\mathbf i\sin\alpha_3\cosh s_2+\cos\alpha_3 \\
2\mathbf i\sin\alpha_3\sinh s_2\\
-\mathbf i\sin\alpha_3\cosh s_2-\cos\alpha_3 \\
 \end{array}\right].\\
\end{split}
\end{equation*}
By (\ref{3}) and the second half of the second equation of (\ref{eigen}), we have 
$$[\mathbf v_2^+,\mathbf v_2^-]=S_2^{-1}D_{-\mathbf i\alpha_3}S_1$$
and
$$\rho([\gamma_3]^{-1})^T\cdot [\mathbf v_2^+,\mathbf v_2^-]=\pm S_2^{-1}D_{-3\mathbf i\alpha_3}S_1.$$
Therefore, by (\ref{difference1})
\begin{equation*}
\begin{split}
\mathbf I_2-\mathrm{Ad}_\rho ([\gamma_3]^{-1})^T\cdot\mathbf I_2&=\mathbf I_{s_2s_1}^{-\mathbf i\alpha_3}-\mathbf I_{s_2s_1}^{-3\mathbf i\alpha_3}\\
&=\mathbf i\sinh s_1\sin \alpha_3\left[
\begin{array}{c}
-\mathbf i\sin(2\alpha_3)\cosh s_2+\cos(2\alpha_3) \\
2\mathbf i\sin(2\alpha_3)\sinh s_2\\
-\mathbf i\sin(2\alpha_3)\cosh s_2-\cos(2\alpha_3) \\
 \end{array}\right].\\
\end{split}
\end{equation*}
By the first half of the second equation of (\ref{holo}) and the second half of the third equation of (\ref{eigen}), we have
$$[\mathbf v_3^+,\mathbf v_3^-]=D_{\mathbf i\alpha_1}S_3^{-1}D_{\mathbf i\alpha_2}S_1$$
and
$$\rho([\gamma_2])^T\cdot [\mathbf v_3^+,\mathbf v_3^-]=\pm D_{\mathbf i\alpha_1}S_3^{-1}D_{3\mathbf i\alpha_2}S_1.$$
Therefore, by (\ref{difference2})
\begin{equation*}
\begin{split}
\mathbf I_3-\mathrm{Ad}_\rho ([\gamma_2])^T\cdot\mathbf I_3&=\mathbf I_{(\mathbf i\alpha_1)s_3s_1}^{\mathbf i\alpha_2}-\mathbf I_{(\mathbf i\alpha_1)s_3s_1}^{3\mathbf i\alpha_2}\\
&=-\mathbf i\sinh s_1\sin \alpha_2\left[
\begin{array}{c}
e^{\mathbf i\alpha_1}\big(\mathbf i\sin(2\alpha_2)\cosh s_3+\cos(2\alpha_2)\big) \\
-2\mathbf i\sin(2\alpha_2)\sinh s_3\\
e^{-\mathbf i\alpha_1}\big(\mathbf i\sin(2\alpha_2)\cosh s_3-\cos(2\alpha_2) \big)\\
 \end{array}\right].\\
\end{split}
\end{equation*}
We observe that the matrix
\begin{equation*}
\begin{split}
&\left[
\begin{array}{ccc}
-\mathbf i\sin\alpha_3\cosh s_2+\cos\alpha_3  & -\mathbf i\sin(2\alpha_3)\cosh s_2+\cos(2\alpha_3) & e^{\mathbf i\alpha_1}\big(\mathbf i\sin(2\alpha_2)\cosh s_3+\cos(2\alpha_2)\big) \\
2\mathbf i\sin\alpha_3\sinh s_2 & 2\mathbf i\sin(2\alpha_3)\sinh s_2 & -2\mathbf i\sin(2\alpha_2)\sinh s_3\\
-\mathbf i\sin\alpha_3\cosh s_2-\cos\alpha_3 & -\mathbf i\sin(2\alpha_3)\cosh s_2-\cos(2\alpha_3) & e^{-\mathbf i\alpha_1}\big(\mathbf i\sin(2\alpha_2)\cosh s_3-\cos(2\alpha_2) \big)\\
 \end{array}\right]\\
=&\left[\begin{array}{ccc}
-\mathbf i & 0 & 1\\
0 & 2\mathbf i & 0\\
-\mathbf i & 0 &-1\\
 \end{array}\right]\cdot\left[
\begin{array}{ccc}
\sin\alpha_3\cosh s_2 & \sin(2\alpha_3)\cosh s_2 & -\cos\alpha_1\sin(2\alpha_2)\cosh s_3-\sin\alpha_1\cos(2\alpha_2) \\
\sin\alpha_3\sinh s_2 & \sin(2\alpha_3)\sinh s_2 & -\sin(2\alpha_2)\sinh s_3\\
\cos\alpha_3 & \cos(2\alpha_3) & -\sin\alpha_1\sin(2\alpha_2)\cosh s_3+\cos\alpha_1\cos(2\alpha_2)\\
 \end{array}\right].\\
 \end{split}
\end{equation*}
Denoting  the second matrix above by $M$, we have
\begin{equation*}
\begin{split}
&\det\Big[\mathbf I_1-\mathrm{Ad}_\rho ([\gamma_3]^{-1})^T\cdot\mathbf I_1,\ \mathbf I_2-\mathrm{Ad}_\rho ([\gamma_3]^{-1})^T\cdot\mathbf I_2,\ \mathbf I_3-\mathrm{Ad}_\rho ([\gamma_2])^T\cdot\mathbf I_3\Big]\\
=&-4\mathbf i\sinh^2s_1\sinh s_2\sin\alpha_2\sin^2\alpha_3\det M.\\
 \end{split}
\end{equation*}
Computing the cofactors of $M,$ we have 
$M_{13}=-\sin\alpha_3\sinh s_2,$
$M_{23}=\sin\alpha_3\cosh s_2$ and 
$M_{33}=0.$ Then 
\begin{equation*}
\begin{split}
&\det M\\
=&-\sin\alpha_3\sinh s_2\big(-\cos\alpha_1\sin(2\alpha_2)\cosh s_3-\sin\alpha_1\cos(2\alpha_2) \big)-\sin\alpha_3\cosh s_2\sin(2\alpha_2)\sinh s_3\\
=&-\sinh s_2\sin\alpha_1\sin \alpha_3,
\end{split}
\end{equation*}
where the last equality comes from the use of the hyperbolic Law of Sine that $\sinh s_3 = \frac{\sinh s_2\sin \alpha_3}{\sin \alpha_2}$ to get a common factor $\sinh s_2,$ then the use of the hyperbolic Law of Cosine that $\cosh s_2 =\frac{\cos \alpha_2+\cos\alpha_1\cos\alpha_3}{\sin \alpha_1\sin\alpha_3}$ and $\cosh s_3 =\frac{\cos \alpha_3+\cos\alpha_1\cos\alpha_2}{\sin \alpha_1\sin\alpha_2}$ to change the quantity into a function of the angles $\alpha_1,$ $\alpha_2$ and $\alpha_3$ only, finally the use of the double angle formulas to $\sin(2\alpha_2),$ $\cos(2\alpha_2)$ and $\sin(2\alpha_3)$ to get a function of $\{\sin\alpha_k\}$ and $\{\cos\alpha_k\}$ only then followed by a simplification.

Therefore, 
\begin{equation}\label{det5}
\begin{split}
&\det\Big[\mathbf I_1-\mathrm{Ad}_\rho ([\gamma_3]^{-1})^T\cdot\mathbf I_1,\ \mathbf I_2-\mathrm{Ad}_\rho ([\gamma_3]^{-1})^T\cdot\mathbf I_2,\ \mathbf I_3-\mathrm{Ad}_\rho ([\gamma_2])^T\cdot\mathbf I_3\Big]\\
=&4\mathbf i\sin\alpha_1\sin\alpha_2\sin^3\alpha_3\sinh^2s_1\sinh ^2s_2.
 \end{split}
\end{equation}
This computes the determinant in (2) for the holonomy representation of a hyperbolic structure.

For the other characters in $\mathrm X(P),$ we observe that for the holonomy representation $\rho$ of a hyperbolic structure  with cone angles $(2\alpha_1,2\alpha_2,2\alpha_3),$ for any lifting $\widetilde\rho:\pi_1(P)\to\mathrm{SL}(2;\mathbb C)$  of $\rho,$ we have
$$\mathrm {Tr}\widetilde\rho([\gamma_i])=\pm2\cos\alpha_i$$
for $i\in\{1,2,3\}.$ Then by the trigonometry identity and   the hyperbolic Law of Cosine, we have
\begin{equation}\label{rational}
\sinh s_i=\pm\sqrt{\frac{-\det\mathrm G_{\boldsymbol \alpha}}{(1-\cos^2\alpha_j)(1-\cos^2\alpha_k)}}
\end{equation}
for $\{i,j,k\}=\{1,2,3\},$ where 
$$\mathrm G_{\boldsymbol \alpha}=\left[
\begin{array}{cccc}
1 & -\cos\alpha_{3} & -\cos\alpha_{2} \\
-\cos\alpha_{3}& 1 &-\cos\alpha_{1 }\\
-\cos\alpha_{2} & -\cos\alpha_{1} & 1  \\
 \end{array}\right]$$
is the Gram matrix in the angles of the hyperbolic triangle with angles $(\alpha_1,\alpha_2,\alpha_3).$ 
Therefore, the square of $\sinh s_i$ is a rational functions in $(\mathrm {Tr}\widetilde\rho([\gamma_1]),\mathrm {Tr}\widetilde\rho([\gamma_2]),\mathrm {Tr}\widetilde\rho([\gamma_3])).$ Since $\mathrm X(P)$ is an irreducible algebraic variety, by the analyticity of the functions on the right hand sides, (\ref{det}) and (\ref{det5}) hold for 
the other characters $[\rho]$ in $\mathrm X(P).$  

Since the square of the determinants in conditions (1) and (2) are rational functions in the coordinates $(\mathrm {Tr}\widetilde\rho([\gamma_1]),\mathrm {Tr}\widetilde\rho([\gamma_2]),\mathrm {Tr}\widetilde\rho([\gamma_3])),$ the lifting of those characters form a Zariski-open subset of $\mathbb C^3,$ and hence those characters themselves form a Zariski-open subset of $\mathrm X(P).$
\\

Next we show that the representations satisfying (1) and (2) are $\boldsymbol\gamma$-regular.  We will compute the homologies of $P$ using its spine $\Gamma,$ which is the $1$-dimensional $CW$ complex on the left of Figure \ref{spine1} consisting of two $0$-cells $x_1$ and $x_1$ and three $1$-cells $a_1,$ $a_2$ and $a_3$ all of which are oriented from $x_1$ to $x_2.$ 

\begin{figure}[htbp]
\centering
\includegraphics[scale=0.45]{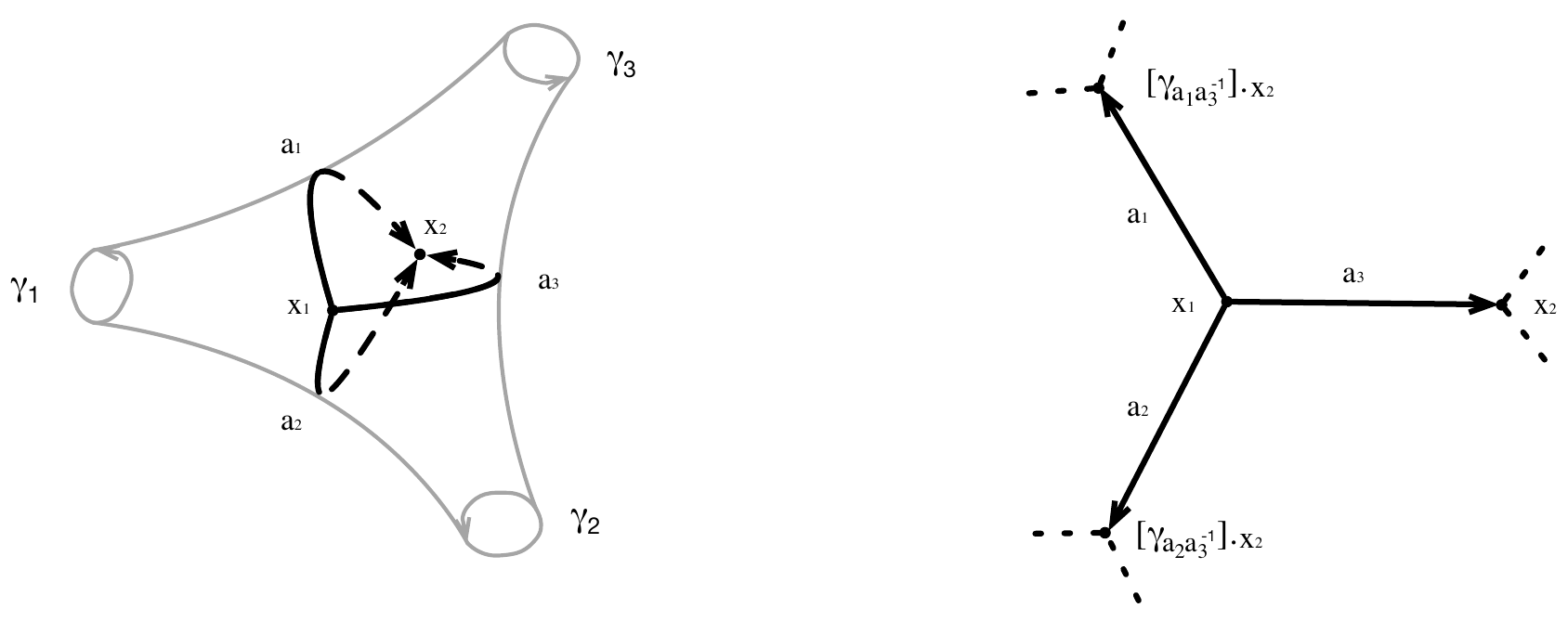}
\caption{}
\label{spine1} 
\end{figure}

Let $\{\mathbf e_1,\mathbf e_2,\mathbf e_3\}$ be the standard basis of $\mathbb C^3$ and let the choice of representatives $x_1,x_2,a_1,a_2$ and $a_3$ in the universal covering of $\Gamma$ be as drawn on the right of Figure \ref{spine1}.  Then $\mathrm C_0(P;\mathrm{Ad}_\rho)\cong \mathbb C^6$ with a natural basis $\{\mathbf e_i\otimes x_k\}$ for $i\in\{1,2,3\}$ and $k\in\{1,2\};$ $\mathrm C_1(P;\mathrm{Ad}_\rho)\cong \mathbb C^9$ with a natural basis $\{\mathbf e_i\otimes a_k\}$ for $i,k\in\{1,2,3\};$ and $\mathrm C_k(P;\mathrm{Ad}_\rho)=0$ for $k\neq 0$ or $1.$

We choose $x_1$ to be the base point of the fundamental group; and for $\{j,k\}\subset \{1,2,3\},$ let $\gamma_{a_ja_k^{-1}}$ be the curve starting from $x_1$ traveling along $a_j$ to $x_2$ then along $-a_k$ back to $x_1.$ In this way, we have $[\gamma_{a_1a_2^{-1}}]=[\gamma_1],$ $[\gamma_{a_2a_3^{-1}}]=[\gamma_2]$ and $[\gamma_{a_1a_3^{-1}}]=[\gamma_3]^{-1}.$

By condition (1),  the vectors $\mathbf I_1\otimes (a_1-a_2),$ $\mathbf I_2\otimes (a_2-a_3)$ and $\mathbf I_3\otimes (a_1-a_3)$ are linearly independent in $\mathrm C_1(P; \mathrm{Ad}_\rho).$ Next we show that they lie in the kernel of $\partial : \mathrm C_1(P;\mathrm{Ad}_\rho)\to \mathrm C_0(P;\mathrm{Ad}_\rho).$ Indeed, for the image of $\mathbf I_1\otimes (a_1-a_2),$ we have
\begin{equation*}
\begin{split}
\partial (\mathbf I_1\otimes (a_1-a_2))&=\mathbf I_1\otimes \partial (a_1 -a_2)\\
&=\mathbf I_1\otimes \Big((x_1-[\gamma_{a_1a_3^{-1}}]\cdot x_2)-(x_1-[\gamma_{a_2a_3^{-1}}]\cdot x_2)\Big)\\
&=\mathbf I_1\otimes \Big([\gamma_2]\cdot x_2- [\gamma_3]^{-1}\cdot x_2\Big)\\
&=\Big(\mathrm{Ad}_\rho ([\gamma_2])^T \cdot\mathbf I_1-\mathrm{Ad}_\rho ([\gamma_3]^{-1})^T \cdot \mathbf I_1\Big)\otimes x_2\\
&=\Big(\mathrm{Ad}_\rho ([\gamma_2])^T \mathrm{Ad}_\rho ([\gamma_1])^T\cdot \mathbf I_1-\mathrm{Ad}_\rho ([\gamma_3]^{-1})^T \cdot \mathbf I_1\Big)\otimes x_2=0,
\end{split}
\end{equation*}
where the penultimate equality comes from $\mathrm{Ad}_\rho ([\gamma_1])^T\cdot \mathbf I_1=\mathbf I_1$ and the last equation comes from $\gamma_1\cdot\gamma_2=\gamma_3^{-1}.$
For the image of the other two vectors, we have 
\begin{equation*}
\begin{split}
\partial (\mathbf I_2\otimes (a_2-a_3))&=\mathbf I_2\otimes \partial (a_2 -a_3)\\
&=\mathbf I_2\otimes \Big((x_1-[\gamma_{a_2a_3^{-1}}]\cdot x_2)-(x_1- x_2)\Big)\\
&=\mathbf I_2\otimes \Big( x_2- [\gamma_2]\cdot x_2\Big)\\
&=\Big(\mathbf I_2-\mathrm{Ad}_\rho ([\gamma_2])^T \cdot \mathbf I_2\Big)\otimes x_2=0,
\end{split}
\end{equation*}
and 
\begin{equation*}
\begin{split}
\partial (\mathbf I_3\otimes (a_1-a_3))&=\mathbf I_3\otimes \partial (a_1 -a_3)\\
&=\mathbf I_3\otimes \Big((x_1-[\gamma_{a_1a_3^{-1}}]\cdot x_2)-(x_1- x_2)\Big)\\
&=\mathbf I_3\otimes \Big( x_2- [\gamma_3]^{-1}\cdot x_2\Big)\\
&=\Big(\mathbf I_3-\mathrm{Ad}_\rho ([\gamma_3]^{-1})^T \cdot \mathbf I_3\Big)\otimes x_2=0,
\end{split}
\end{equation*}
where the last equalities respectively come from $\mathrm{Ad}_\rho ([\gamma_2])^T\cdot \mathbf I_2=\mathbf I_2$ and $\mathrm{Ad}_\rho ([\gamma_3]^{-1})^T\cdot \mathbf I_3=\mathbf I_3.$ Therefore, $\mathbf I_1\otimes (a_1-a_2),$ $\mathbf I_2\otimes (a_2-a_3)$ and $\mathbf I_3\otimes (a_1-a_3)$ represent three linearly independent elements $\mathbf I_1\otimes [\gamma_1],$ $\mathbf I_2\otimes[\gamma_2]$ and $\mathbf I_3\otimes [\gamma_3]$ in $\mathrm H_1(P;\mathrm{Ad}_\rho).$ Later we will prove that they also span, and hence form a basis of $\mathrm H_1(P;\mathrm{Ad}_\rho).$

Now we claim that $\{\mathbf I_1\otimes (a_1-a_2), \mathbf I_2\otimes (a_2-a_3), \mathbf I_3\otimes (a_1-a_3)\}$ joint with six vectors $\{\mathbf I_1\otimes a_3, \mathbf I_2\otimes a_3, \mathbf I_3\otimes a_3, \mathbf I_1\otimes a_1,  \mathbf I_2\otimes a_1, \mathbf I_3\otimes a_2\}$ form a basis of 
$\mathrm C_1(P;\mathrm{Ad}_\rho).$ Indeed, in the natural basis $\{\mathbf e_i\otimes a_k\},$ $i,k\in\{1,2,3\},$ the $9\times 9$ matrix consisting of these vectors as the columns is obtained from the one consisting of $\{\mathbf I_j\otimes a_k\},$ $j,k\in\{1,2,3\},$ as the columns by a sequence of elementary column operations of type I, III, and II with a factor $-1.$ The latter matrix is a block matrix with three $3\times3$ blocks $[\mathbf I_1,\mathbf I_2,\mathbf I_3]$ on the diagonal and $0's$ elsewhere, hence by condition (1) is non-singular and has determinant  $\det[\mathbf I_1,\mathbf I_2,\mathbf I_3]^3.$ As a consequence, the former matrix is also non-singular and up to sign has determinant  $\det[\mathbf I_1,\mathbf I_2,\mathbf I_3]^3.$

In the next step, we will study the image of the six vectors $\{\mathbf I_1\otimes a_3, \mathbf I_2\otimes a_3, \mathbf I_3\otimes a_3, \mathbf I_1\otimes a_1,  \mathbf I_2\otimes a_1, \mathbf I_3\otimes a_2\}$ under the boundary map $\partial,$ and show that they span $\mathrm C_0(P;\mathrm{Ad}_\rho).$ We have for $j=1,2,3,$
$$\partial( \mathbf I_j\otimes a_3)= \mathbf I_j\otimes \partial a_3=\mathbf I_j\otimes (x_1-x_2)=\mathbf I_j\otimes x_1-\mathbf I_j\otimes x_2;$$
for $k=1,2,$
$$\partial (\mathbf I_k\otimes a_1)= \mathbf I_k\otimes \partial a_1=\mathbf I_k\otimes (x_1-[\gamma_{a_1a_3^{-1}}]\cdot x_2)=\mathbf I_k\otimes x_1-\Big(\mathrm{Ad}_\rho ([\gamma_3]^{-1})^T\cdot\mathbf I_k\Big)\otimes x_2;$$
and
$$\partial (\mathbf I_3\otimes a_2)= \mathbf I_3\otimes \partial a_2=\mathbf I_3\otimes (x_1-[\gamma_{a_2a_3^{-1}}]\cdot x_2)=\mathbf I_3\otimes x_1-\Big(\mathrm{Ad}_\rho ([\gamma_2])^T\cdot\mathbf I_3\Big)\otimes x_2.$$
Therefore, in the natural basis $\{\mathbf e_i\otimes x_k\},$ $i\in\{1,2,3\},$ $k\in\{1,2\},$ the $6\times 6$ matrix consisting of $\{\partial(\mathbf I_1\otimes a_3), \partial(\mathbf I_2\otimes a_3), \partial(\mathbf I_3\otimes a_3), \partial(\mathbf I_1\otimes a_1),  \partial(\mathbf I_2\otimes a_1), \partial(\mathbf I_3\otimes a_2)\}$ as the columns has four $3\times 3$ blocks, where on the top it has two copies of $[\mathbf I_1,\mathbf I_2,\mathbf I_3],$ on the bottom left is has $[-\mathbf I_1,-\mathbf I_2,-\mathbf I_3]$ and on the bottom right 
$$\Big[-\mathrm{Ad}_\rho ([\gamma_3]^{-1})^T\cdot\mathbf I_1,\ -\mathrm{Ad}_\rho ([\gamma_3]^{-1})^T\cdot\mathbf I_2,\ -\mathrm{Ad}_\rho ([\gamma_2])^T\cdot\mathbf I_3\Big].$$
This matrix is row equivalent to (by adding the top blocks to the bottom) the one with two copies of $[\mathbf I_1,\mathbf I_2,\mathbf I_3]$ on the top, $0's$ on the bottom left and
$$\Big[\mathbf I_1-\mathrm{Ad}_\rho ([\gamma_3]^{-1})^T\cdot\mathbf I_1,\ \mathbf I_2-\mathrm{Ad}_\rho ([\gamma_3]^{-1})^T\cdot\mathbf I_2,\ \mathbf I_3-\mathrm{Ad}_\rho ([\gamma_2])^T\cdot\mathbf I_3\Big]$$
on the bottom right. The determinant of both of the $6\times 6$ matrices is
$$\det[\mathbf I_1,\mathbf I_2,\mathbf I_3]\cdot\det\Big[\mathbf I_1-\mathrm{Ad}_\rho ([\gamma_3]^{-1})^T\cdot\mathbf I_1,\ \mathbf I_2-\mathrm{Ad}_\rho ([\gamma_3]^{-1})^T\cdot\mathbf I_2,\ \mathbf I_3-\mathrm{Ad}_\rho ([\gamma_2])^T\cdot\mathbf I_3\Big].$$
By conditions (1) and (2),  the product above is nonzero and hence $\{\partial(\mathbf I_1\otimes a_3), \partial(\mathbf I_2\otimes a_3), \partial(\mathbf I_3\otimes a_3), \partial(\mathbf I_1\otimes a_1),  \partial(\mathbf I_2\otimes a_1), \partial(\mathbf I_3\otimes a_2)\}$ span $\mathrm C_0(P;\mathrm{Ad}_\rho).$ 
This implies that $\mathrm H_0(P;\mathrm{Ad}_\rho)=0.$ Since there are no cells of dimension higher than or equal to $2,$ $\mathrm H_k(P;\mathrm{Ad}_\rho)=0$ for $k\geqslant 2.$ 

Finally,  since $\{\partial(\mathbf I_1\otimes a_3), \partial(\mathbf I_2\otimes a_3), \partial(\mathbf I_3\otimes a_3), \partial(\mathbf I_1\otimes a_1),  \partial(\mathbf I_2\otimes a_1), \partial(\mathbf I_3\otimes a_2)\}$ span $\mathrm C_0(P;\mathrm{Ad}_\rho)\cong \mathbb C^6,$ by dimension counting the kernel of $\partial: \mathrm C_1(P;\mathrm{Ad}_\rho) \to \mathrm C_0(P;\mathrm{Ad}_\rho)$ has dimension at most $3.$ Hence $\mathbf I_1\otimes (a_1-a_2),$ $\mathbf I_2\otimes (a_2-a_3)$ and $\mathbf I_3\otimes (a_1-a_3)$ span the kernel of $\partial.$ This shows that the elements they represent  $\mathbf h_P=\{\mathbf I_1\otimes [\gamma_1], \mathbf I_2\otimes[\gamma_2], \mathbf I_3\otimes [\gamma_3]\}$ form a basis of $\mathrm H_1(P;\mathrm{Ad}_\rho),$ and $\mathrm H_1(P;\mathrm{Ad}_\rho)\cong \mathbb C^3.$ This completes the proof. 
\end{proof}

\begin{proof} [Proof of Proposition \ref{P}]

Since the adjoint twisted Reidemeister torsion is invariant under subdivisions, elementary expansions and elementary collapses of CW-complexes  by \cite{M,T}, we can do the computation using the spine $\Gamma$ of $P$ as on the left of Figure \ref{spine1}.

The  adjoint twisted Reidemeistor torsion equals, up to sign, the determinant of the $9\times 9$ matrix consisting of $\{\mathbf I_1\otimes (a_1-a_2), \mathbf I_2\otimes (a_2-a_3), \mathbf I_3\otimes (a_1-a_3), \mathbf I_1\otimes a_3, \mathbf I_2\otimes a_3, \mathbf I_3\otimes a_3, \mathbf I_1\otimes a_1,  \mathbf I_2\otimes a_1, \mathbf I_3\otimes a_2\}$ as the columns divided by the determinant of the $6\times 6$ matrix consisting of $\{\partial(\mathbf I_1\otimes a_3), \partial(\mathbf I_2\otimes a_3), \partial(\mathbf I_3\otimes a_3), \partial(\mathbf I_1\otimes a_1),  \partial(\mathbf I_2\otimes a_1), \partial(\mathbf I_3\otimes a_2)\}$ as the columns. 

By (\ref{det}) and (\ref{det5}), for the holonomy representation of a hyperbolic structure we have
\begin{equation*}
\begin{split}
&\mathrm{Tor}(P, \mathbf h_P;\mathrm{Ad}_\rho)\\
=&\pm\frac{\det[\mathbf I_1,\mathbf I_2,\mathbf I_3]\cdot \det[\mathbf I_1,\mathbf I_2,\mathbf I_3]\cdot \det[\mathbf I_1,\mathbf I_2,\mathbf I_3]}{\det[\mathbf I_1,\mathbf I_2,\mathbf I_3]\cdot\det\Big[\mathbf I_1-\mathrm{Ad}_\rho ([\gamma_3]^{-1})^T\cdot\mathbf I_1,\ \mathbf I_2-\mathrm{Ad}_\rho ([\gamma_3]^{-1})^T\cdot\mathbf I_2,\ \mathbf I_3-\mathrm{Ad}_\rho ([\gamma_2])^T\cdot\mathbf I_3\Big]}\\
=&\pm\frac{\mathbf i}{16\sin\alpha_1\sin\alpha_2\sin\alpha_3}\\
=&\pm\frac{1}{16\sinh\frac{ u_1}{2}\sinh\frac{ u_2}{2}\sinh\frac{ u_3}{2}},
\end{split}
\end{equation*}
where the second equality comes from the hyperbolic Law of Sine that $\frac{\sinh s_3}{\sinh s_1}=\frac{\sin \alpha_3}{\sin\alpha_1}.$ 

Finally, by Lemma \ref{LI} and the analyticity, the result holds for all $\boldsymbol\gamma$-regular characters in $\mathrm X(P).$
\end{proof}


\section{Adjoint twisted Reidemeister torsion of the $D$-blocks}\label{TD}

Let $\Delta$ be a truncated  tetrahedron with triangles of truncation $T_1,$ $T_2,$ $T_3,$ $T_4$ and hexagonal faces $H_1,$ $H_2,$ $H_3,$ $H_4$ such that $T_k$ is opposite to $H_k.$ Recall that an \emph{edge} is the intersection of two hexagonal faces; and we call the intersection of a triangle of truncation and a hexagonal face a \emph{short edge}. Let $D$ be the union of $\Delta$ with its mirror image via the identity map between the four hexagonal faces  $H_1,\dots, H_4$ and with the six edges emoved. This is a \emph{D-block} as defined in \cite{C} and recalled in Section \ref{fsl}. For $\{j,k\}\subset \{1,2,3,4\}$ we let $e_{jk}$ be the edge adjacent to $H_j$ and $H_k.$ For $\{j,k\}\subset\{1,2,3,4\},$ let $\gamma_{jk}$ be a simple  loop around $e_{jk}.$

The fundamental group  $\pi_1(D)$  is a free group of rank $3$ generated by $[\gamma_{12}],$ $[\gamma_{13}]$ and $[\gamma_{14}].$ 
By \cite{G1}, the $\mathrm{SL}(2;\mathbb C)$-character variety of $D$ is homeomorphic to a hypersurface in $\mathbb C^7$ parametrized by the traces  of the image of $[\gamma_{12}],$ $[\gamma_{13}],$ $[\gamma_{14}],$ $[\gamma_{12}\cdot\gamma_{13}],$ $[\gamma_{12}\cdot\gamma_{14}],$ $[\gamma_{13}\cdot\gamma_{14}]$ and $[\gamma_{12}\cdot\gamma_{13}\cdot\gamma_{14}],$ which is a double branched cover of $\mathbb C^6$ parametrized by the first six components. A representation $\widetilde \rho:\pi_1(D)\to\mathrm{SL}(2;\mathbb C)$ is not in the branch locus if and only if 
$$f_D\big(\mathrm {Tr}\widetilde\rho([\gamma_{12}]),
\mathrm {Tr}\widetilde\rho([\gamma_{13}]),
\mathrm {Tr}\widetilde\rho([\gamma_{14}]),\mathrm {Tr}\widetilde\rho([\gamma_{12}\cdot\gamma_{13}]),
\mathrm {Tr}\widetilde\rho([\gamma_{12}\cdot\gamma_{14}]),
\mathrm {Tr}\widetilde\rho([\gamma_{13}\cdot\gamma_{14}])\big)\neq 0,$$
where $f_D$ is the polynomial
\begin{equation*}
\begin{split}
f_D(t_1,t_2,t_3&,t_{12},t_{13},t_{23})=\big(t_{12}t_3+t_{13}t_2+t_{23}t_1-t_1t_2t_3\big)^2\\
&-4\big(t_1^2+t_2^2+t_3^2+t_{12}^2+t_{13}^2+t_{23}^2-t_1t_2t_{12}-t_1t_3t_{13}-t_2t_3t_{23}+t_{12}t_{13}t_{23}-4\big).
\end{split}
\end{equation*}

The \emph{logarithmic holonomies} of $(\gamma_{12},\gamma_{13},\gamma_{14},\gamma_{23},\gamma_{24},\gamma_{34})$ in $\widetilde \rho$ are up to sign the complex numbers $(u_{12}, u_{13},u_{14},u_{23},u_{24},u_{34})$ satisfying 
\begin{equation*}
\begin{split}
\big(\mathrm {Tr}\widetilde\rho([\gamma_{12}]),
&\mathrm {Tr}\widetilde\rho([\gamma_{13}]),
\mathrm {Tr}\widetilde\rho([\gamma_{14}]),\mathrm {Tr}\widetilde\rho([\gamma_{23}]),
\mathrm {Tr}\widetilde\rho([\gamma_{24}]),
\mathrm {Tr}\widetilde\rho([\gamma_{34}])\big)\\
=&\Big(-2\cosh\frac{u_{12}}{2},-2\cosh\frac{u_{13}}{2},-2\cosh\frac{u_{14}}{2},-2\cosh\frac{u_{23}}{2},-2\cosh\frac{u_{24}}{2},-2\cosh\frac{u_{34}}{2}\Big).
\end{split}
\end{equation*}
In this way, if $D$ is with the hyperbolic structure obtained by doubling the regular ideal octahedron, $\rho_0:\pi_1(D)\to\mathrm{PSL}(2;\mathbb C)$ is the holonomy representation of this hyperbolic structure on $D$ and $\widetilde \rho_0:\pi_1(D)\to\mathrm{SL}(2;\mathbb C)$ is the lifting of $\rho_0$ with
\begin{equation*}
\begin{split}\big(\mathrm {Tr}\widetilde\rho_0([\gamma_{12}]),
\mathrm {Tr}\widetilde\rho_0([\gamma_{13}]),
\mathrm {Tr}\widetilde\rho_0([\gamma_{14}]),\mathrm {Tr}\widetilde\rho_0([\gamma_{23}]),
\mathrm {Tr}\widetilde\rho_0([\gamma_{24}]),
&\mathrm {Tr}\widetilde\rho_0([\gamma_{34}])\big)\\
=&(-2,-2,-2,-2,-2,-2),
\end{split}
\end{equation*}
 then the logarithmic holonomies of $(\gamma_{12},\dots,\gamma_{34})$  in $\widetilde\rho_0$ are $(0,\dots,0).$ We notice that the complete hyperbolic structure on a fundamental shadow link complement is obtained by gluing such $D$-blocks together by isometries along the faces. Therefore, this hyperbolic structure can be considered as ``the complete hyperbolic structure'' on $D.$   

The \emph{Gram matrix} of a representation $\widetilde \rho:\pi_1(D)\to\mathrm{SL}(2;\mathbb C)$ is  the value of the Gram matrix function $\mathbb G$ defined in Definition \ref{GMF} at $\big(\frac{u_{12}}{2},\dots, \frac{u_{34}}{2}\big),$ ie, 
$$\mathbb G=\mathbb G\Big(\frac{u_{12}}{2},\frac{u_{13}}{2},\frac{u_{14}}{2},\frac{u_{23}}{2},\frac{u_{24}}{2},\frac{u_{34}}{2}\Big)=\left[
\begin{array}{cccc}
1 & -\cosh \frac{u_{12}}{2} & -\cosh  \frac{u_{13}}{2} &-\cosh  \frac{u_{14}}{2}\\
-\cosh  \frac{u_{12}}{2}& 1 &-\cosh  \frac{u_{23}}{2} & -\cosh  \frac{u_{24}}{2}\\
-\cosh  \frac{u_{13}}{2} & -\cosh  \frac{u_{23}}{2} & 1 & -\cosh  \frac{u_{34}}{2} \\
-\cosh  \frac{u_{14}}{2} & -\cosh  \frac{u_{24}}{2} & -\cosh  \frac{u_{34}}{2}  & 1 \\
 \end{array}\right].$$
 By the trace identity of the matrices in $\mathrm{SL}(2;\mathbb C),$ for $\{j,k\}\subset\{2,3,4\},$ 
 $$\mathrm {Tr}\widetilde\rho([\gamma_{jk}])=\mathrm {Tr}\widetilde\rho([\gamma_{1j}\cdot\gamma_{1k}^{-1})]=\mathrm {Tr}\widetilde\rho([\gamma_{1j}])\mathrm {Tr}\widetilde\rho([\gamma_{1k}])-\mathrm {Tr}\widetilde\rho([\gamma_{1j}\cdot\gamma_{1k}]).$$
Then by a direct computation, we have
$$
f_D\big(\mathrm {Tr}\widetilde\rho([\gamma_{12}]),
\mathrm {Tr}\widetilde\rho([\gamma_{13}]),
\mathrm {Tr}\widetilde\rho([\gamma_{14}]),\mathrm {Tr}\widetilde\rho([\gamma_{12}\cdot\gamma_{13}]),
\mathrm {Tr}\widetilde\rho([\gamma_{12}\cdot\gamma_{14}]),
\mathrm {Tr}\widetilde\rho([\gamma_{13}\cdot\gamma_{14}])\big)\\
=16\det \mathbb G,$$
  and $\widetilde\rho$ is not in the branch locus of the double branched cover of the $\mathrm{SL}(2;\mathbb C)$-character variety of $D$ over $\mathbb C^6$  if and only if $\det\mathbb G\neq 0.$

Since $\pi_1(D)$ is a free group, every $\mathrm{PSL}(2;\mathbb C)$-representation of it lifts to  $\mathrm{SL}(2;\mathbb C)$-representation Hence the  $\mathrm{SL}(2;\mathbb C)$-character variety of $D$ is a branched cover of the $\mathrm{PSL}(2;\mathbb C)$-character variety of  $D,$ and the latter is an irreducible algebraic variety.  For a representation $\rho:\pi_1(D)\to\mathrm{PSL}(2;\mathbb C),$ we defined the logarithmic holonomies $(u_{12},\dots,u_{34})$ and the Gram matrix $\mathbb G$ of $\rho$  as those of a lifting $\widetilde \rho:\pi_1(D)\to\mathrm{SL}(2;\mathbb C)$ of $\rho.$ Notice that the logarithmic holonomies depend on the choice of the liftings of $\rho,$ and a different lifting will change $\mathbb G$ by multiplying some rows and the corresponding columns by $-1$ at the same time, which does not change its determinant. Therefore, the \emph{determinant of the Gram matrix} $\det\mathbb G$ is independent of the choice of the liftings, and  is a well defined quantity of $\rho.$

Let $\rho:\pi_1(D)\to \mathrm{PSL}(2;\mathbb C)$ be a  representation, and let $\mathrm{Ad}_\rho:\pi_1(D)\to \mathrm{SL}(3;\mathbb C)$ be its adjoint representation. In addition, we assume for each $\{j,k\}\subset\{1,2,3,4\}$ that $\rho([\gamma_{jk}])\neq\pm I.$ Then in the case that $\rho([\gamma_{jk}])$ is not a parabolic element, we let $\mathbf I_{jk}$ be up to sign the unique invariant vector of $\mathrm {Ad}_\rho([\gamma_{jk}])^T$ with $\kappa(\mathbf I_{jk},\mathbf I_{jk})=1,$ where $\kappa$ is the Killing bilinear form on $\mathfrak{psl}(2;\mathbb C)$ defined in (\ref{Killing}).


\begin{definition}\label{Dreg} Let $\boldsymbol\gamma=(\gamma_{12},\gamma_{13},\gamma_{14},\gamma_{23},\gamma_{24},\gamma_{34}).$ An irreducible representation $\rho:\pi_1(D)\to\mathrm{PSL}(2;\mathbb C)$ is \emph{$\boldsymbol\gamma$-regular} if 
 $$\mathbf h_D=\big\{\mathbf I_{jk}\otimes [\gamma_{jk}]\}\ \big|\ \{j,k\}\subset \{1,2,3,4\}\big\}$$
is a basis of $\mathrm{H}_1(D;\mathrm{Ad}_\rho),$ where $[\gamma_{jk}]$ is the homology class of $\gamma_{jk}$ in $\mathrm H_1(D;\mathbb Z).$
\end{definition}

Let $\mathrm X(D)$ be the $\mathrm{PSL}(2;\mathbb C)$-character variety of $D.$ A character $[\rho]\in\mathrm X(D)$ is  \emph{$\boldsymbol \gamma$-regular } if $\rho$ is a $\boldsymbol \gamma$-regular representation. Since $\pi_1(D)$ is a free group, an Euler characteristic counting argument shows that if $[\rho]$ is  $\boldsymbol\gamma$-regular, then $\mathrm{H}_k(D;\mathrm{Ad}_\rho)=0$ for $k\neq 1.$

The main result of this section is the following Proposition \ref{D}. 

\begin{proposition}\label{D} Let $\rho:\pi_1(D)\to\mathrm{PSL}(2;\mathbb C)$ be a $\boldsymbol\gamma$-regular representation, and 
for $\{j,k\}\subset\{1,2,3,4\}$ let $ u_{jk}$ be up to sign the \emph{logarithmic holonomy} of $\gamma_{jk}$ in $\rho.$ Then
$$\mathrm{Tor}(D, \mathbf h_D; \mathrm{Ad}_\rho)=\pm\frac{\sqrt{\det\mathbb G\Big(\frac{u_{12}}{2}, \frac{u_{13}}{2}, \frac{u_{14}}{2}, \frac{u_{23}}{2}, \frac{u_{24}}{2},\frac{u_{34}}{2}\Big)}}{32\sinh\frac{ u_{12}}{2}\sinh\frac{ u_{13}}{2}\sinh\frac{ u_{14}}{2}\sinh\frac{ u_{23}}{2}\sinh\frac{ u_{24}}{2}\sinh\frac{ u_{34}}{2}}.$$
\end{proposition}


To prove Proposition \ref{D}, we need the following Lemma.

\begin{lemma} \label{LMD}
The set of $\boldsymbol \gamma$-regular characters contains a Zariski-open subset $\mathrm Z(D)$ of $\mathrm X(D)$ consisting of the characters $[\rho]$ satisfying the following two conditions:
\begin{enumerate}[(1)]
\item 
\begin{equation*}
\begin{split}
\det[\mathbf I_{12},\mathbf I_{13},\mathbf I_{14}]&\neq 0,\\
\det[\mathbf I_{12},\mathbf I_{23},\mathbf I_{24}]&\neq 0,\\
\det[\mathbf I_{13},\mathbf I_{23},\mathbf I_{34}]&\neq 0,\\
\det[\mathbf I_{14},\mathbf I_{24},\mathbf I_{34}]&\neq0,\\
\end{split}
\end{equation*}
and
\item \begin{equation*}
\det\Big[\mathbf I_{12}-\mathrm{Ad}_\rho ([\gamma_{13}])^T\cdot\mathbf I_{12},\ \mathbf I_{14}-\mathrm{Ad}_\rho ([\gamma_{13}])^T\cdot\mathbf I_{14},\mathbf I_{24}-\mathrm{Ad}_\rho ([\gamma_{23}])^T\cdot\mathbf I_{24}\Big]\neq 0.
\end{equation*}
\end{enumerate}
\end{lemma}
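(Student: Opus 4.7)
The plan is to reduce Part~(1) to Lemma~\ref{LI}(1), and to prove Part~(2) by an explicit hyperbolic-geometric computation analogous to the proof of Lemma~\ref{LI}(2).

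For Part~(1), observe that for each vertex $v \in \{1,2,3,4\}$ of $\Delta$, the three edges $\{e_{vj} : j \neq v\}$ are precisely the three edges of $\Delta$ meeting at vertex $v$, and hence are the three edges adjacent to the triangle of truncation $T_v$. In $D$, the double of $T_v$ along its three short edges (with the endpoints removed) is a pair of pants $P_v$ whose three boundary loops are $\{\gamma_{vj} : j \neq v\}$, and the restriction of $\rho$ to $\pi_1(P_v)$ is the holonomy representation of the hyperbolic pair of pants (either $P_\alpha$ in the $D_\alpha$ case or $P_l$ in the $D_l$ case). Each of the four determinants in Part~(1) is therefore precisely $\det[\mathbf I_1,\mathbf I_2,\mathbf I_3]$ for one of the pairs of pants $P_1,P_2,P_3,P_4$ in the notation of Lemma~\ref{LI}, and is nonzero by Lemma~\ref{LI}(1).

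For Part~(2), the three invariant vectors $\mathbf I_{12},\mathbf I_{14},\mathbf I_{24}$ correspond to the three long edges of the hexagonal face $H_3$ (equivalently, the edges of the face of $\Delta$ opposite to vertex~$3$), and are not boundary loops of a single pair of pants; so Lemma~\ref{LI}(2) cannot be applied directly. The plan is to carry out the computation explicitly in $\mathbb H^3$ following the pattern of the proof of Lemma~\ref{LI}(2). First, isometrically embed $\Delta$ in $\mathbb H^3$ with strategic coordinates (for instance, placing the hexagonal face $H_3$ in a coordinate plane). Second, express each holonomy $\rho([\gamma_{ij}])$ as a product $\sigma_k\sigma_{k'}$ of the reflections across the two hexagonal faces $H_k,H_{k'}$ containing the edge $e_{ij}$, and compute each invariant vector $\mathbf I_{ij}$ via formula~(\ref{invformula}). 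Third, evaluate the three columns: the first two, involving $\gamma_{13}$ acting on $\mathbf I_{12}$ and $\mathbf I_{14}$, are computable inside the pair of pants $P_1$ (which contains all of $\gamma_{12},\gamma_{13},\gamma_{14}$); the third, involving $\gamma_{23}$ acting on $\mathbf I_{24}$, inside $P_2$ (which contains $\gamma_{23}$ and $\gamma_{24}$). In each case the difference identities~(\ref{difference1}) and~(\ref{difference2}) apply verbatim. Finally, simplifying the resulting $3\times 3$ determinant via the hyperbolic Laws of Sine and Cosine on $H_3$ and the adjacent triangles $T_1,T_2,T_4$ (or their dual analogues in the $D_l$ case) yields a manifestly nonzero product of hyperbolic sines of edge lengths and, in the $D_\alpha$ case, sines of dihedral angles.

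The main obstacle will be the bookkeeping in the $3$-dimensional computation: the reflection products and invariant vectors involve more factors than in the $2$-dimensional setting of Lemma~\ref{LI}(2), and the two cases $D_\alpha$ and $D_l$ must be tracked in parallel (with imaginary versus real arguments in the hyperbolic identities). Nevertheless, the structural form of the final answer should mirror that of (\ref{det5}) and (\ref{detl5}), and its non-vanishing follows from the non-degeneracy of the truncated hyperideal tetrahedron.
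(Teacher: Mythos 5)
Your reduction of Part~(1) to Lemma~\ref{LI}(1) rests on a misidentification. In the $D_\alpha$ case the paper's convention is $e_{jk}=H_j\cap H_k$, so the triple $e_{12},e_{13},e_{14}$ consists of the three long edges of the hexagonal face $H_1$, not of the three edges meeting at a vertex; the edges adjacent to the triangle of truncation $T_v$ are $\{e_{jk}:v\notin\{j,k\}\}$. Consequently the double of $T_v$ is indeed one of the pairs of pants in $\partial D_\alpha$, but its boundary loops are $\{\gamma_{jk}:v\notin\{j,k\}\}$, and no such ``complementary'' triple coincides with any of the four triples in Part~(1) (each of those shares a common index). Moreover, the subgroup of $\pi_1(D)$ generated by $\gamma_{12},\gamma_{13},\gamma_{14}$ is free of rank $3$ (it is generated by $[\gamma_{a_1a_2^{-1}}],[\gamma_{a_1a_3^{-1}}],[\gamma_{a_1a_4^{-1}}]$, which freely generate $\pi_1$ of the spine), so the restriction of $\rho$ to it is not a pair-of-pants holonomy; and in $D_\alpha$ the elements $\rho(\gamma_{1j})$ are elliptic, whereas the pair of pants whose boundary axes are the long edges of $H_1$ is of type $P_l$, with hyperbolic boundary holonomy --- the opposite type from the one you assign. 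The reduction can be repaired: by (\ref{invformula}) each $\mathbf I_{jk}$ depends only on $[\mathbf v_{jk}^+,\mathbf v_{jk}^-]$, i.e.\ only on the axis of $\rho(\gamma_{jk})$, and the three axes $e_{1j}$ realize exactly the boundary-geodesic configuration of the $P_l$ obtained by doubling $H_1$ along its short edges (resp.\ the cone-axis configuration of the $P_\alpha$ doubling $T_1$, in the $D_l$ case), so Lemma~\ref{LI}(1) applied to that auxiliary pair of pants of the \emph{opposite} type gives the non-vanishing. Both the axis-only observation and the correct type are missing from your argument; the paper instead recomputes $\det[\mathbf I_{12},\mathbf I_{13},\mathbf I_{14}]$ directly from an explicit embedding and obtains the other three determinants by moving $H_j$ (resp.\ $T_j$) into the position of $H_1$ (resp.\ $T_1$) by an isometry, which changes the invariant vectors by a single $\mathrm{SL}(3;\mathbb C)$ matrix.

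For Part~(2) your plan coincides with the paper's proof --- embed $\Delta$, write the holonomies in the normal forms $X^{-1}D_zY$ and $D_wX^{-1}D_zY$, compute the columns via (\ref{difference1}) and (\ref{difference2}), and simplify with the hyperbolic Laws of Sine and Cosine --- and your predicted shape of the answer matches (\ref{det4}) and (\ref{ldet4}). But the computation is not carried out, and the non-vanishing is exactly the content of the final simplification (one must see the cancellation in the cofactor expansion, e.g.\ that the imaginary part of $\det M$ vanishes), so as written Part~(2) is a program rather than a proof.
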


\begin{proof} Let us compute the determinants in conditions (1) and (2) first.  Similar to the proof of Lemma \ref{LI} we will do the computations for the holonomy representation of a hyperbolic metric on $D$ with cone singularities around the edges $e_{jk}$'s first. Then by analyticity the computation extends to the other representations. 
 
Now let $\Delta$ be a truncated hyperideal tetrahedron and let  $D$ be the union of $\Delta$ with its mirror image via the identity map between the four hexagonal faces  $H_1,\dots, H_4$ and with the six edges $e_{12},\dots, e_{34}$ removed. This is a hyperbolic D-block defined in Section \ref{dhp}. For $\{j,k\}\subset \{1,2,3,4\}$ we let $l_{jk}$ and $\alpha_{jk}$ respectively be the length of and the dihedral angle at the edge $e_{jk}.$ We let $s_{jk}$ be the length of the short edge adjacent to $T_j$ and $H_k,$ and notice that   $s_{jk}$ and $s_{kj}$ are the lengths of different short edges.

Let $\rho:\pi_1(D)\to \mathrm{PSL}(2;\mathbb C)$ be the holonomy representation of $D$ and let $\mathrm{Ad}_\rho:\pi_1(D)\to \mathrm{SL}(3;\mathbb C)$ be its adjoint representation. For $\{j,k\}\subset\{1,2,3,4\},$ let $\gamma_{jk}$ be a simple  loop around $e_{jk}.$ Since $\rho(\gamma_{jk})$ is  an elliptic element in $\mathrm{PSL}(2;\mathbb C)$ which is not the identity matrix, $\mathrm {Ad}_\rho([\gamma_{jk}])^T$ has up to sign the canonical invariant vector  $\mathbf I_{jk}.$

To compute the holonomy representation $\rho$ of $D,$ we isometrically embedded $\Delta$ into $\mathbb H^3$ as follows. As in Figure \ref{tetra1}, we place the intersection point of $H_1,$ $H_2$ and $T_4$ at $(0,0,1),$ the edge $e_{12}$ along the $z$-axis such that the intersection point of $H_1,$ $H_2$ and $T_3$ is above $(0,0,1),$ the hexagonal face $H_1$ in the $xz$-plane and $T_4$ in the unit hemisphere centered at $(0,0,0)$ such that the $y$-coordinate of all the interior points of $\Delta$ are negative. This could always be done by using the mirror image of $\Delta$ if necessary.

\begin{figure}[htbp]
\centering
\includegraphics[scale=0.5]{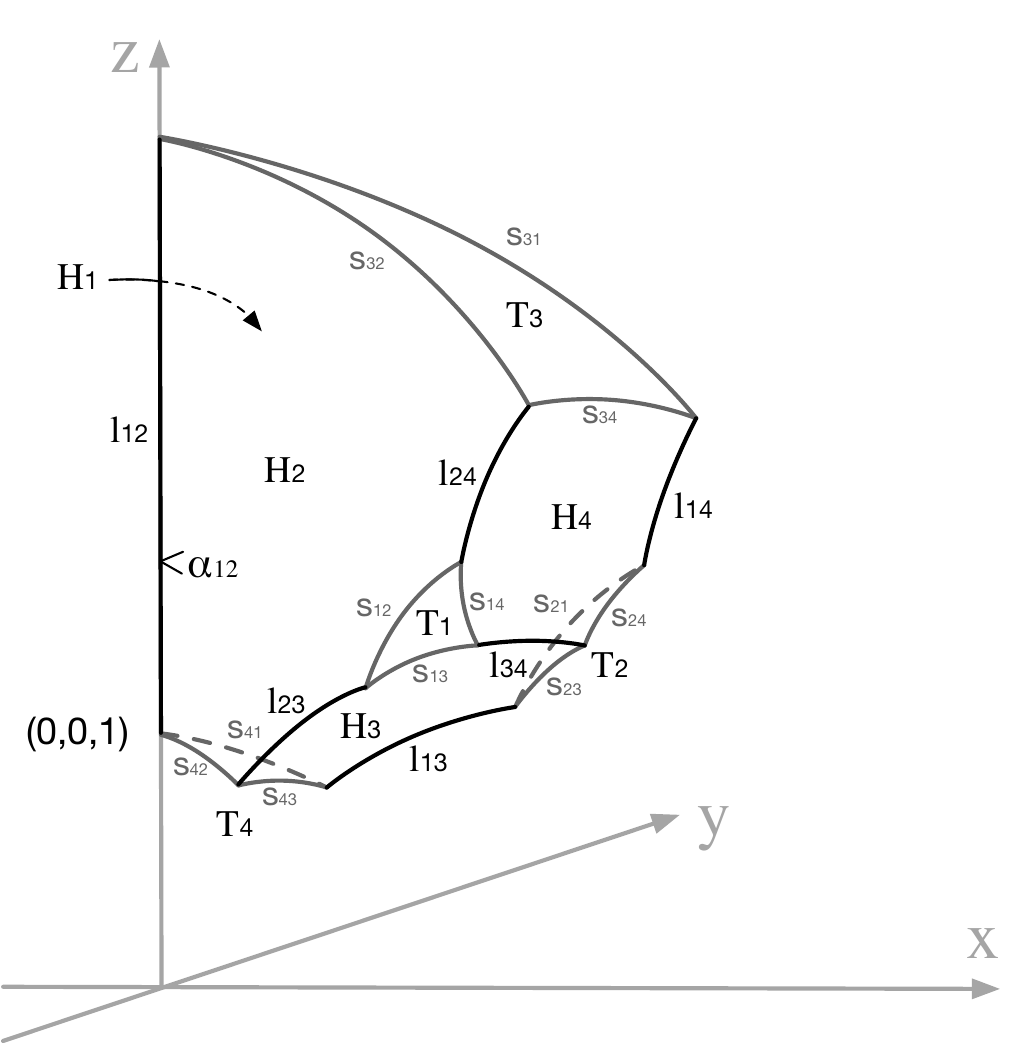}
\caption{}
\label{tetra1} 
\end{figure}

For any complex number $z$ let
\begin{equation*}
D_z=\left[
\begin{array}{cc}
e^{\frac{z}{2}}& 0\\
 0 &  e^{-\frac{z}{2}}\\
 \end{array}\right],
\end{equation*}
and for $\{j,k\}\subset\{1,2,3,4\}$ let 
\begin{equation*}
S_{jk}=\left[
\begin{array}{cc}
\cosh{\frac{s_{jk}}{2}}& \sinh{\frac{s_{jk}}{2}} \\
\sinh{\frac{s_{jk}}{2}} & \cosh{\frac{s_{jk}}{2}}\\
 \end{array}\right].
\end{equation*}

Suppose  $\gamma_{12},$ $\gamma_{14},$ $\gamma_{23}$ and $\gamma_{24}$ go counterclockwise and  $\gamma_{13}$  goes clockwise around the corresponding edges observed from the perspective above $T_3.$ By conjugating the tangent framings back to $p_1=(0,0,1)$ and conjugating the tangent vectors of the axes of the rotations to $\frac{\partial}{\partial z},$ we have 
\begin{equation*}
\begin{split}
\rho([\gamma_{12}])&=\pm D_{2\mathbf i\alpha_{12}},\\
\rho([\gamma_{13}])&=\pm S_{41}D_{-2\mathbf i\alpha_{13}}S_{41}^{-1},\\
\rho([\gamma_{14}])&=\pm D_{l_{12}}S_{31}D_{2\mathbf i\alpha_{14}}S_{31}^{-1}D_{l_{12}}^{-1}=\pm S_{41}D_{l_{13}}S_{21}^{-1}D_{-2\mathbf i\alpha_{14}}S_{21}D_{l_{13}}^{-1}S_{41}^{-1},\\
\rho([\gamma_{23}])&=\pm D_{\mathbf i\alpha_{12}}^{-1}S_{42}D_{2\mathbf i\alpha_{23}}S_{42}^{-1}D_{\mathbf i\alpha_{12}},\\
\rho([\gamma_{24}])&=\pm D_{\mathbf i\alpha_{12}}^{-1}S_{42}D_{l_{23}}S_{12}^{-1}D_{-2\mathbf i\alpha_{24}}S_{12}D_{l_{23}}^{-1}S_{42}^{-1}D_{\mathbf i\alpha_{12}}.
\end{split}
\end{equation*}
Here we write $\rho([\gamma_{14}])$  in two ways for the purpose of computing different things later. Since both $D_z$ and $S_{jk}$ are symmetric matrices, we have
\begin{equation}\label{holo2}
\begin{split}
\rho([\gamma_{12}])^T&=\pm D_{2\mathbf i\alpha_{12}},\\
\rho([\gamma_{13}])^T&=\pm S_{41}^{-1}D_{-2\mathbf i\alpha_{13}}S_{41},\\
\rho([\gamma_{14}])^T&=\pm D_{l_{12}}^{-1}S_{31}^{-1}D_{2\mathbf i\alpha_{14}}S_{31}D_{l_{12}}=\pm S_{41}^{-1}D_{l_{13}}^{-1}S_{21}D_{-2\mathbf i\alpha_{14}}S_{21}^{-1}D_{l_{13}}S_{41},\\ 
\rho([\gamma_{23}])^T&=\pm D_{\mathbf i\alpha_{12}}S_{42}^{-1}D_{2\mathbf i\alpha_{23}}S_{42}D_{\mathbf i\alpha_{12}}^{-1},\\
\rho([\gamma_{24}])^T&=\pm D_{\mathbf i\alpha_{12}}S_{42}^{-1}D_{l_{23}}^{-1}S_{12}D_{-2\mathbf i\alpha_{24}}S_{12}^{-1}D_{l_{23}}S_{42}D_{\mathbf i\alpha_{12}}^{-1}.
\end{split}
\end{equation}
Since $\rho([\gamma_{jk}])^T$ is a rotation of angle $2\alpha_{jk},$ it has an eigenvector  $\mathbf v_{jk}^+$ with eigenvalue $e^{\mathbf i\alpha_{jk}}$ and an eigenvector  $\mathbf v_{jk}^-$ with eigenvalue $e^{-\mathbf i\alpha_{jk}}.$ By (\ref{holo2}) we have
\begin{equation}\label{eigen2}
\begin{split}
[\mathbf v_{12}^+,\mathbf v_{12}^-]&=I,\\
[\mathbf v_{13}^+,\mathbf v_{13}^-]&=S_{41}^{-1},\\
[\mathbf v_{14}^+,\mathbf v_{14}^-]&=D_{l_{12}}^{-1}S_{31}^{-1}=S_{41}^{-1}D_{l_{13}}^{-1}S_{21},\\
[\mathbf v_{24}^+,\mathbf v_{24}^-]&= D_{\mathbf i\alpha_{12}}S_{42}^{-1}D_{l_{23}}^{-1}S_{12},
\end{split}
\end{equation}
and by (\ref{invformula}), the first half of the third equation of (\ref{eigen2}) and a direct computation we have
\begin{equation}\label{inv2}
\mathbf I_{12}=\left[
\begin{array}{c}
0 \\
1 \\
0\\
 \end{array}\right],
 \quad
   \mathbf I_{13}=\left[
\begin{array}{c}
-\frac{1}{2}\sinh s_{41}\\
\cosh s_{41} \\
-\frac{1}{2}\sinh s_{41}\\
 \end{array}\right]
 \quad\text{and}\quad
 \mathbf I_{14}=\left[
\begin{array}{c}
-\frac{1}{2}e^{-l_{12}}\sinh s_{31}\\
\cosh s_{31} \\
-\frac{1}{2}e^{l_{12}}\sinh s_{31}\\
 \end{array}\right].
\end{equation}
Since $\kappa(\mathbf I_{12},\mathbf I_{12})=\kappa(\mathbf I_{13},\mathbf I_{13})=\kappa(\mathbf I_{14},\mathbf I_{14})=1,$ they are the canonical invariant vectors. 
Therefore, 
\begin{equation}\label{det2}
\det[\mathbf I_{12},\mathbf I_{13},\mathbf I_{14}]=-\frac{1}{2}\sinh l_{12}\sinh s_{31}\sinh s_{41}.
\end{equation}

Here we notice that by the hyperbolic Law of Sine for $H_1,$ the quantity $\sinh l_{12}\sinh s_{31}\sinh s_{41}$ remains the same if we choose any edge and two adjacent short edges of $H_1,$ hence is an intrinsic quantity of $H_1.$

For any $i\neq 1,$ applying an orientation preserving isometry $\phi_i$ of $\mathbb H^3$ we can place $H_i$ in $\mathbb H^3$ in the same way as $H_1;$ and the invariant vector $\mathbf I_{ij},$ $i\neq j,$ will be changed by $\mathrm{Ad}_{\phi_i},$ which is a matrix in $\mathrm{SL}(3;\mathbb C).$  Therefore, following the same computation as we did for (\ref{det2}), we have
\begin{equation}\label{det3}
\begin{split}
\det[\mathbf I_{12},\mathbf I_{23},\mathbf I_{24}]&=\frac{1}{2}\sinh l_{12}\sinh s_{32}\sinh s_{42},\\
\det[\mathbf I_{13},\mathbf I_{23},\mathbf I_{34}]&=-\frac{1}{2}\sinh l_{13}\sinh s_{23}\sinh s_{43},\\
\det[\mathbf I_{14},\mathbf I_{24},\mathbf I_{34}]&=\frac{1}{2}\sinh l_{14}\sinh s_{24}\sinh s_{34}.\\
\end{split}
\end{equation}
This computes the determinants in (1) for the holonomy representation of a hyperbolic $D$-block.

To compute the determinant in (2), by the second equation of (\ref{holo2}) and the first equation of (\ref{eigen2}), we have 
$$[\mathbf v_{12}^+,\mathbf v_{12}^-]=I=S_{41}^{-1}D_0S_{41}$$
and
$$\rho([\gamma_{13}])^T\cdot [\mathbf v_{12}^+,\mathbf v_{12}^-]=\pm S_{41}^{-1}D_{-2\mathbf i\alpha_{13}}S_{41}.$$
Therefore, by (\ref{difference1}) and the notation therein, 
\begin{equation*}
\begin{split}
\mathbf I_{12}-\mathrm{Ad}_\rho ([\gamma_{13}])^T\cdot\mathbf I_{12}&=\mathbf I_{s_{41}s_{41}}^0-\mathbf I_{s_{41}s_{41}}^{-2\mathbf i\alpha_{13}}\\
&=\mathbf i\sinh s_{41}\sin \alpha_{13}\left[
\begin{array}{c}
-\mathbf i\sin\alpha_{13}\cosh s_{41}+\cos\alpha_{13} \\
2\mathbf i\sin\alpha_{13}\sinh s_{41}\\
-\mathbf i\sin\alpha_{13}\cosh s_{41}-\cos\alpha_{13} \\
 \end{array}\right].\\
\end{split}
\end{equation*}
By the second equation of (\ref{holo2}) again  and the second half of the third equation of (\ref{eigen2}), we have 
$$[\mathbf v_{14}^+,\mathbf v_{14}^-]=S_{41}^{-1}D_{-l_{13}}S_{21}$$
and
$$\rho([\gamma_{13}])^T\cdot [\mathbf v_{14}^+,\mathbf v_{14}^-]=\pm S_{41}^{-1}D_{-l_{13}-2\mathbf i\alpha_{13}}S_{21}.$$
Therefore, by (\ref{difference1})
\begin{equation*}
\begin{split}
\mathbf I_{14}-\mathrm{Ad}_\rho ([\gamma_{13}])^T\cdot\mathbf I_{14}&=\mathbf I_{s_{41}s_{21}}^{-l_{13}}-\mathbf I_{s_{41}s_{21}}^{-l_{13}-2\mathbf i\alpha_{13}}\\
&=\mathbf i\sinh s_{21}\sin \alpha_{13}\left[
\begin{array}{c}
-\sinh(l_{13}+\mathbf i\alpha_{13})\cosh s_{41}+\cosh(l_{13}+\mathbf i\alpha_{13}) \\
2\sinh(l_{13}+\mathbf i\alpha_{13})\sinh s_{41}\\
-\sinh(l_{13}+\mathbf i\alpha_{13})\cosh s_{41}-\cosh(l_{13}+\mathbf i\alpha_{13}) \\
 \end{array}\right].\\
\end{split}
\end{equation*}
Finally, by the fourth equation of (\ref{holo2}) and (\ref{eigen2}), we have
$$[\mathbf v_{24}^+,\mathbf v_{24}^-]= D_{\mathbf i\alpha_{12}}S_{42}^{-1}D_{-l_{23}}S_{12}$$
and
$$\rho([\gamma_{23}])^T\cdot[\mathbf v_{24}^+,\mathbf v_{24}^-]=\pm D_{\mathbf i\alpha_{12}}S_{42}^{-1}D_{-l_{23}+2\mathbf i\alpha_{23}}S_{12}.
$$
Therefore, by (\ref{difference2})
\begin{equation*}
\begin{split}
\mathbf I_{24}-\mathrm{Ad}_\rho ([\gamma_{23}])^T\cdot\mathbf I_{24}&=\mathbf I_{(\mathbf i\alpha_{12})s_{42}s_{12}}^{-l_{23}}-\mathbf I_{(\mathbf i\alpha_{12})s_{42}s_{12}}^{-l_{23}+2\mathbf i\alpha_{23}}\\
&=-\mathbf i\sinh s_{12}\sin \alpha_{23}\left[
\begin{array}{c}
e^{\mathbf i\alpha_{12}}\big(-\sinh(l_{23}-\mathbf i\alpha_{23})\cosh s_{42}+\cosh(l_{23}-\mathbf i\alpha_{23})\big) \\
2\sinh(l_{23}-\mathbf i\alpha_{23})\sinh s_{42}\\
e^{-\mathbf i\alpha_{12}}\big(-\sinh(l_{23}-\mathbf i\alpha_{23})\cosh s_{42}-\cosh(l_{23}-\mathbf i\alpha_{23})\big)\\
 \end{array}\right].\\
\end{split}
\end{equation*}
Putting all together, we have
\begin{equation*}
\begin{split}
&\det\Big[\mathbf I_{12}-\mathrm{Ad}_\rho ([\gamma_{13}])^T\cdot\mathbf I_{12},\ \mathbf I_{14}-\mathrm{Ad}_\rho ([\gamma_{13}])^T\cdot\mathbf I_{14},\mathbf I_{24}-\mathrm{Ad}_\rho ([\gamma_{23}])^T\cdot\mathbf I_{24}\Big]\\
=&\mathbf i\sin^2\alpha_{13}\sin\alpha_{23}\sinh s_{12}\sinh s_{21}\sinh s_{41}\cdot\det \left[\begin{array}{ccc}
-1 & 0 & 1\\
0 & 2 & 0\\
-1 & 0 &-1\\
 \end{array}\right]\cdot \det M,
\end{split}
\end{equation*}
where $M$ is the following matrix
\begin{equation*}
\begin{split}
&\left[
\begin{array}{ccc}
\mathbf i\sin\alpha_{13}\cosh s_{41}& \sinh(l_{13}+\mathbf i\alpha_{13})\cosh s_{41}&  \cos\alpha_{12}\sinh(l_{23}-\mathbf i\alpha_{23})\cosh s_{42}-\mathbf i\sin\alpha_{12}\cosh(l_{23}-\mathbf i\alpha_{23}) \\
\mathbf i\sin\alpha_{13}\sinh s_{41}& \sinh(l_{13}+\mathbf i\alpha_{13})\sinh s_{41} & \sinh(l_{23}-\mathbf i\alpha_{23})\sinh s_{42}\\
\cos\alpha_{13} & \cosh(l_{13}+\mathbf i\alpha_{13})  &-\mathbf i\sin\alpha_{12}\sinh(l_{23}-\mathbf i\alpha_{23})\cosh s_{42}+\cos\alpha_{12}\cosh(l_{23}-\mathbf i\alpha_{23}) \\
 \end{array}\right].\\
\end{split}
\end{equation*}
Computing the cofactors of $M$using the hyperbolic angle sum formula, we have 
$M_{13}=-\sinh l_{13}\sinh s_{41},$
$M_{23}=\sinh l_{13}\cosh s_{41}$ and 
$M_{33}=0.$ Then 
\begin{equation*}
\begin{split}
\det M=&-\sinh l_{13}\sinh s_{41}\Big( \cos\alpha_{12}\sinh(l_{23}-\mathbf i\alpha_{23})\cosh s_{42}-\mathbf i\sin\alpha_{12}\cosh(l_{23}-\mathbf i\alpha_{23})\Big)\\
&+\sinh l_{13}\cosh s_{41}\sinh(l_{23}-\mathbf i\alpha_{23})\sinh s_{42}\\
=&\frac{\sin\alpha_{12}\sinh l_{13}\sinh l_{23}\sinh s_{42}}{\sin \alpha_{13}},
\end{split}
\end{equation*}
where the last equality comes from the use of the hyperbolic Law of Sine that $\sinh s_{41} = \frac{\sinh s_{42}\sin \alpha_{23}}{\sin \alpha_{13}}$ to get a common factor $\sinh s_{41},$ the use of the hyperbolic Law of Cosine in $T_4$ to write $\cosh s_{41}$ and $\cosh s_{42}$ into trig-functions of the angles $\alpha_{12},$ $\alpha_{13}$ and $\alpha_{23}$ and the use of the angle sum formula to expand $\sinh(l_{23}-i\alpha_{23})$ and $\cosh(l_{23}-i\alpha_{23})$ into trig- and hyperbolic trig-fuctions of $\alpha_{23}$ and $l_{23}.$ Then after a final simplification, the imaginary part vanishes and the real part becomes the quantity above.

Therefore, 
\begin{equation}\label{det4}
\begin{split}
&\det\Big[\mathbf I_{12}-\mathrm{Ad}_\rho ([\gamma_{13}])^T\cdot\mathbf I_{12},\ \mathbf I_{14}-\mathrm{Ad}_\rho ([\gamma_{13}])^T\cdot\mathbf I_{14},\mathbf I_{24}-\mathrm{Ad}_\rho ([\gamma_{23}])^T\cdot\mathbf I_{24}\Big]\\
=&4\mathbf i\sin\alpha_{12}\sin\alpha_{13}\sin\alpha_{23}(\sinh l_{13}\sinh s_{21}\sinh s_{41})(\sinh l_{23}\sinh s_{42}\sinh s_{12}).
 \end{split}
\end{equation} 
This computes the determinant in (2) for the holonomy representation of a hyperbolic $D$-block.

For the other characters in $\mathrm X(D),$ we observe that for the holonomy representation $\rho$ of a hyperbolic D-block  with cone angles $(2\alpha_{12},\dots,2\alpha_{34}),$ for any lifting $\widetilde\rho:\pi_1(D)\to\mathrm{SL}(2;\mathbb C)$ of $\rho,$ we have
$$\mathrm {Tr}\widetilde\rho([\gamma_{jk}])=\pm2\cos\alpha_{jk}$$
for $\{j,k\}\subset\{1,2,3,4\}.$ Notice that $l_{ij},$ $s_{ki}$  and $s_{li}$ are the lengths of an edge and the two adjacent short edges around the face $H_i,$ and all the determines in conditions (1) and (2) have factors products of the form $\sinh l_{ij}\sinh s_{ki}\sinh s_{li}.$ We claim that 
\begin{equation}\label{symmetry}
\sinh l_{ij}\sinh s_{ki}\sinh s_{li}=\pm\sqrt{\frac{-\det \mathrm G_{\boldsymbol\alpha}}{(1-\cos^2\alpha_{ij})(1-\cos^2\alpha_{ik})(1-\cos^2\alpha_{il})}}
\end{equation}
for $\{i,j,k,l\}\subset\{1,2,3,4\},$ where $\mathrm G_{\boldsymbol\alpha}$ is the Gram matrix in the dihedral angles of the truncated hyperideal tetrahedron $\Delta$ recalled in Section \ref{Gram}. As a consequence, the square of $\cosh l_{ij}\sinh s_{ki}\sinh s_{li}$ is a rational function in $(\mathrm {Tr}\widetilde\rho([\gamma_{12}]),\dots,\mathrm {Tr}\widetilde\rho([\gamma_{34}])).$ Indeed, to see (\ref{symmetry}), using the hyperbolic Law of Cosine to the face $H_i,$ we have
\begin{equation*}
\begin{split}
\sinh^2 l_{ij}\sinh^2 s_{ki}\sinh^2 s_{li}=&\bigg(\Big(\frac{\cosh s_{ji}+\cosh s_{ki}\cosh s_{li}}{\sinh s_{ki}\sinh s_{li}}\Big)^2-1\bigg)\sinh^2 s_{ki}\sinh^2 s_{li}\\
=&\, 2\cosh s_{ji}\cosh s_{ki}\cosh s_{li}+\cosh^2 s_{ji} +\cosh^2 s_{ki} + \cosh^2 s_{li}-1;
\end{split}
\end{equation*}
and using the hyperbolic Law of Cosine to the triangles of truncation $T_j,$ $T_k$ and $T_l,$ we have 
$$\cosh s_{ji}=\frac{\cos\alpha_{kl} +\cos\alpha_{ik}\cos\alpha_{il}}{\sin\alpha _{ik}\sin\alpha_{il}},$$
$$\cosh s_{ki}=\frac{\cos\alpha_{jl} +\cos\alpha_{ij}\cos\alpha_{il}}{\sin\alpha_{ij}\sin\alpha_{il}},$$
and
$$\cosh s_{li}=\frac{\cos\alpha_{jk} +\cos\alpha_{ij}\cos\alpha_{ik}}{\sin\alpha_{ij}\sin\alpha_{ik}}.$$
Plugging these into the previous identity, we have (\ref{symmetry}). Since $\mathrm X(D)$ is an irreducible algebraic variety, by analyticity, (\ref{det2}), (\ref{det3}) and (\ref{det4}) hold for the other characters $[\rho]$ in $\mathrm X(D).$  

Since the square of the determinants in conditions (1) and (2) are rational functions in the coordinates $(\mathrm {Tr}\widetilde\rho([\gamma_{12}]),\dots, \mathrm {Tr}\widetilde\rho([\gamma_{34}])),$ the lifting of those characters form a Zariski-open subset of the $\mathrm{SL}(2;\mathbb C)$ character variety of $D,$ and hence those characters themselves form a Zariski-open subset of $\mathrm X(D).$
\\

Next we show that the representations satisfying (1) and (2) are $\boldsymbol\gamma$-regular.  We will compute the homologies of $D$ using its spine $\Gamma,$ which is the $1$-dimensional $CW$ complex consisting of two $0$-cells $x_1$ and $x_2$ (one dual to each copy of $\Delta$) and four $1$-cells $a_1,$ $a_2,$ $a_3$ and $a_4$ (one dual to each hexagonal face $H_j$) all of which are oriented from $x_1$ to $x_2.$

Let $\{\mathbf e_1,\mathbf e_2,\mathbf e_3\}$ be the standard basis of $\mathbb C^3$ and let the choice of representatives $x_1,x_2,a_1,a_2, a_3$ and $a_4$ in the universal covering of $\Gamma$ as drawn in Figure \ref{spine2}.  Then $\mathrm C_0(D;\mathrm{Ad}_\rho)\cong \mathbb C^6$ with a natural basis $\{\mathbf e_i\otimes x_k\}$ for $i\in\{1,2,3\}$ and  $k\in\{1,2\};$ $\mathrm C_1(D;\mathrm{Ad}_\rho)\cong \mathbb C^{12}$ with a natural basis $\{\mathbf e_i\otimes a_k\}$ for $i\in\{1,2,3\}$ and $k\in\{1,2,3,4\};$ and $\mathrm C_k(D;\mathrm{Ad}_\rho)=0$ for $k\neq 0$ or $1.$ 

\begin{figure}[htbp]
\centering
\includegraphics[scale=0.6]{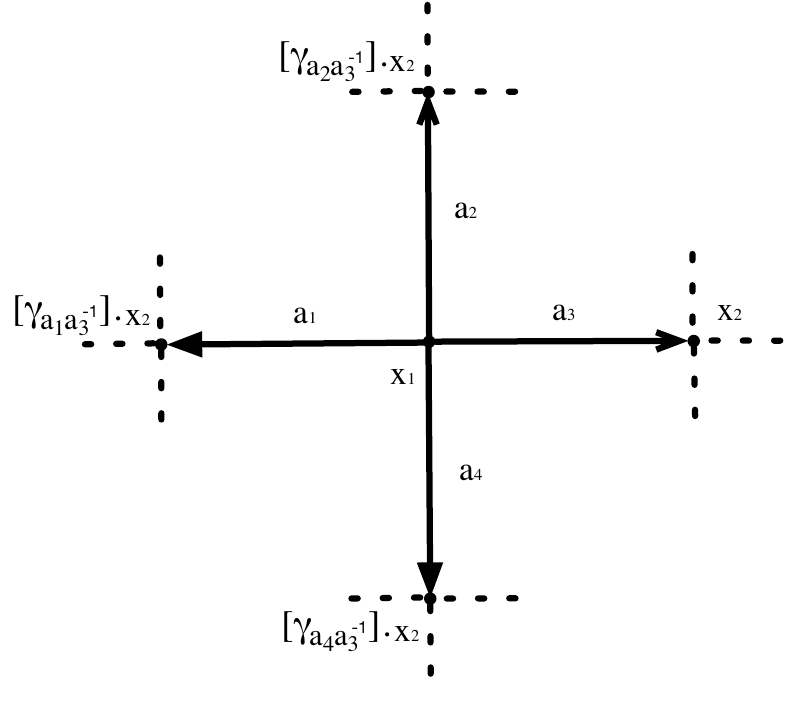}
\caption{}
\label{spine2} 
\end{figure}

We choose $x_1$ to be the base point of the fundamental group; and for $\{j,k\}\subset \{1,2,3\},$ let $\gamma_{a_ja_k^{-1}}$ be the curve starting from $x_1$ traveling along $a_j$ to $x_2$ then along $-a_k$ back to $x_1.$ In this way, we have $[\gamma_{a_ka_j^{-1}}]=[\gamma_{jk}]^{\pm 1}.$ Checking the orientation carefully we have $[\gamma_{a_1a_2^{-1}}]=[\gamma_{12}],$ $[\gamma_{a_2a_3^{-1}}]=[\gamma_{23}]$ and $[\gamma_{a_1a_3^{-1}}]=[\gamma_{13}].$

By condition (1), we see that the vectors \{$\mathbf I_{jk}\otimes (a_j-a_k)\},$ $\{j,k\}\subset\{1,2,3,4\},$  are linearly independent in $\mathrm C_1(D; \mathrm{Ad}_\rho).$ To show that they lie in the kernel of $\partial : \mathrm C_1(D;\mathrm{Ad}_\rho)\to \mathrm C_0(D;\mathrm{Ad}_\rho),$ we have
\begin{equation*}
\begin{split}
\partial (\mathbf I_{jk}\otimes (a_j-a_k))&=\mathbf I_{jk}\otimes \partial (a_j -a_k)\\
&=\mathbf I_{jk}\otimes \Big((x_1-[\gamma_{a_ja_3^{-1}}]\cdot x_2)-(x_1-[\gamma_{a_ka_3^{-1}}]\cdot x_2)\Big)\\
&=\mathbf I_{jk}\otimes \Big([\gamma_{a_ka_3^{-1}}]\cdot x_2- [\gamma_{a_ja_3^{-1}}]\cdot x_2\Big)\\
&=\Big(\mathrm{Ad}_\rho ([\gamma_{a_ka_3^{-1}}])^T \cdot\mathbf I_{jk}-\mathrm{Ad}_\rho ([\gamma_{a_ja_3^{-1}}])^T \cdot \mathbf I_{jk}\Big)\otimes x_2\\
&=\Big(\mathrm{Ad}_\rho ([\gamma_{a_ka_3^{-1}}])^T \mathrm{Ad}_\rho ([\gamma_{a_ja_k^{-1}}])^T\cdot \mathbf I_{jk}-\mathrm{Ad}_\rho([\gamma_{a_ja_3^{-1}}])^T \cdot \mathbf I_{jk}\Big)\otimes x_2=0,
\end{split}
\end{equation*}
where the penultimate equality comes from $\mathrm{Ad}_\rho ([\gamma_{a_ja_k^{-1}}])^T\cdot \mathbf I_{jk}=\mathrm{Ad}_\rho ([\gamma_{jk}]^{\pm 1})^T\cdot \mathbf I_{jk}=\mathbf I_{jk}$ and the last equation comes from $\gamma_{a_ja_k^{-1}}\cdot\gamma_{a_ka_3^{-1}}=\gamma_{a_ja_3^{-1}}.$ Therefore, $\{\mathbf I_{jk}\otimes (a_j-a_k)\},$ $\{j,k\}\subset\{1,2,3,4\},$ represent six linearly independent elements $\{\mathbf I_{jk}\otimes [\gamma_{jk}]\}$ in $\mathrm H_1(D;\mathrm{Ad}_\rho).$ Later we will prove that they also span, and hence form a basis of $\mathrm H_1(D;\mathrm{Ad}_\rho).$

Now we claim that these six vectors $\{\mathbf I_{12}\otimes (a_1-a_2), \mathbf I_{13}\otimes (a_1-a_3), \mathbf I_{14}\otimes (a_1-a_4), \mathbf I_{23}\otimes (a_2-a_3), \mathbf I_{24}\otimes (a_2-a_4), \mathbf I_{34}\otimes (a_3-a_4)\}$  joint with the other six vectors $\{\mathbf I_{13}\otimes a_3, \mathbf I_{23}\otimes a_3, \mathbf I_{34}\otimes a_3, \mathbf I_{12}\otimes a_1,  \mathbf I_{14}\otimes a_1, \mathbf I_{24}\otimes a_2\}$ form a basis of 
$\mathrm C_1(D;\mathrm{Ad}_\rho).$ Indeed, in the natural basis $\{\mathbf e_i\otimes a_k\}$ for $i\in\{1,2,3\}$ and $k\in\{1,2,3,4\},$ the $12\times 12$ matrix consisting of these vectors as the columns is obtained from the one consisting of $\{\mathbf I_{jk}\otimes a_k\},$ $k\in\{1,2,3,4\}$ and $j\neq k,$ as the columns by a sequence of elementary column operations of type I, III, and II with a factor $-1.$ The latter matrix is a block matrix with four $3\times3$ blocks $[\mathbf I_{12},\mathbf I_{13},\mathbf I_{14}],$ $[\mathbf I_{12},\mathbf I_{23},\mathbf I_{24}],$ $[\mathbf I_{13},\mathbf I_{23},\mathbf I_{34}]$ and  $[\mathbf I_{14},\mathbf I_{24},\mathbf I_{34}]$ on the diagonal and $0's$ elsewhere, hence has determinant  
$$\det[\mathbf I_{12},\mathbf I_{13},\mathbf I_{14}]\cdot\det[\mathbf I_{12},\mathbf I_{23},\mathbf I_{24}]\cdot\det[\mathbf I_{13},\mathbf I_{23},\mathbf I_{34}]\cdot\det[\mathbf I_{14},\mathbf I_{24},\mathbf I_{34}]$$
and by condition  (1) is non-singular. As a consequence, the former matrix is also non-singular and up to sign has the same determinant.

Next we will study the image of the six vectors $\{\mathbf I_{13}\otimes a_3, \mathbf I_{23}\otimes a_3, \mathbf I_{34}\otimes a_3, \mathbf I_{12}\otimes a_1,  \mathbf I_{14}\otimes a_1, \mathbf I_{24}\otimes a_2\}$ under the boundary map $\partial,$ and show that they span $\mathrm C_0(D;\mathrm{Ad}_\rho).$ We have for $j=1,2,4,$
$$\partial( \mathbf I_{j3}\otimes a_3)= \mathbf I_{j3}\otimes \partial a_3=\mathbf I_{j3}\otimes (x_1-x_2)=\mathbf I_{j3}\otimes x_1-\mathbf I_{j3}\otimes x_2;$$
for $k=2,4,$
$$\partial (\mathbf I_{1k}\otimes a_1)= \mathbf I_{1k}\otimes \partial a_1=\mathbf I_{1k}\otimes (x_1-[\gamma_{a_1a_3^{-1}}]\cdot x_2)=\mathbf I_{1k}\otimes x_1-\Big(\mathrm{Ad}_\rho ([\gamma_{13}])^T\cdot\mathbf I_{1k}\Big)\otimes x_2;$$
and
$$\partial (\mathbf I_{24}\otimes a_2)= \mathbf I_{24}\otimes \partial a_2=\mathbf I_{24}\otimes (x_1-[\gamma_{a_2a_3^{-1}}]\cdot x_2)=\mathbf I_{24}\otimes x_1-\Big(\mathrm{Ad}_\rho ([\gamma_{23}])^T\cdot\mathbf I_{24}\Big)\otimes x_2.$$
Therefore, in the natural basis $\{\mathbf e_i\otimes x_k\},$ $i\in\{1,2,3\},$ $k\in\{1,2\},$ the $6\times 6$ matrix consisting of $\{\partial(\mathbf I_{13}\otimes a_3), \partial(\mathbf I_{23}\otimes a_3), \partial(\mathbf I_{34}\otimes a_3), \partial(\mathbf I_{12}\otimes a_1),  \partial(\mathbf I_{14}\otimes a_1), \partial(\mathbf I_{24}\otimes a_2)\}$ as the columns has four $3\times 3$ blocks, where on the top left it has $[\mathbf I_{13},\mathbf I_{23},\mathbf I_{34}]$ and on the bottom left it has $[-\mathbf I_{13},-\mathbf I_{23},-\mathbf I_{34}];$ on the top right it has $[\mathbf I_{12},\mathbf I_{14},\mathbf I_{24}]$ and on the bottom right it has
$$\Big[-\mathrm{Ad}_\rho ([\gamma_{13}])^T\cdot\mathbf I_{12},\ -\mathrm{Ad}_\rho ([\gamma_{13}])^T\cdot\mathbf I_{14},\ -\mathrm{Ad}_\rho ([\gamma_{23}])^T\cdot\mathbf I_{24}\Big].$$
This matrix is row equivalent to (by adding the top blocks to the bottom) the one with $[\mathbf I_{13},\mathbf I_{23},\mathbf I_{34}]$ on the top left, $0's$ on the bottom left and
$$\Big[\mathbf I_{12}-\mathrm{Ad}_\rho ([\gamma_{13}])^T\cdot\mathbf I_{12},\ \mathbf I_{14}-\mathrm{Ad}_\rho ([\gamma_{13}])^T\cdot\mathbf I_{14},\mathbf I_{24}-\mathrm{Ad}_\rho ([\gamma_{23}])^T\cdot\mathbf I_{24}\Big]$$
on the bottom right. Hence the determinant of both of the $6\times 6$ matrices are equal to
$$\det[\mathbf I_{13},\mathbf I_{23},\mathbf I_{34}]\cdot\det\Big[\mathbf I_{12}-\mathrm{Ad}_\rho ([\gamma_{13}])^T\cdot\mathbf I_{12},\ \mathbf I_{14}-\mathrm{Ad}_\rho ([\gamma_{13}])^T\cdot\mathbf I_{14},\mathbf I_{24}-\mathrm{Ad}_\rho ([\gamma_{23}])^T\cdot\mathbf I_{24}\Big].$$

By conditions (1) and (2),  the product above is nonzero, and hence  $\{\partial(\mathbf I_{13}\otimes a_3), \partial(\mathbf I_{23}\otimes a_3), \partial(\mathbf I_{34}\otimes a_3), \partial(\mathbf I_{12}\otimes a_1),  \partial(\mathbf I_{14}\otimes a_1), \partial(\mathbf I_{24}\otimes a_2)\}$ span $\mathrm C_0(D;\mathrm{Ad}_\rho).$ This implies that $\mathrm H_0(D;\mathrm{Ad}_\rho)=0.$ Since there are no cells of dimension higher than or equal to $2,$ $\mathrm H_k(D;\mathrm{Ad}_\rho)=0$ for $k\geqslant 2.$

Now since  $\{\partial(\mathbf I_{13}\otimes a_3), \partial(\mathbf I_{23}\otimes a_3), \partial(\mathbf I_{34}\otimes a_3), \partial(\mathbf I_{12}\otimes a_1),  \partial(\mathbf I_{14}\otimes a_1), \partial(\mathbf I_{24}\otimes a_2)\}$ span $\mathrm C_0(D;\mathrm{Ad}_\rho)\cong \mathbb C^6,$ by dimension counting the kernel of $\partial: \mathrm C_1(D;\mathrm{Ad}_\rho) \to \mathrm C_0(D;\mathrm{Ad}_\rho)$ has dimension at most $6.$ Hence $\{\mathbf I_{12}\otimes (a_1-a_2), \mathbf I_{13}\otimes (a_1-a_3), \mathbf I_{14}\otimes (a_1-a_4), \mathbf I_{23}\otimes (a_2-a_3), \mathbf I_{24}\otimes (a_2-a_4), \mathbf I_{34}\otimes (a_3-a_4)\}$ span the kernel of $\partial.$ This shows that the elements they represent  $\mathbf h_D=\{\mathbf I_{jk}\otimes [\gamma_{jk}]\},$ $\{j,k\}\subset \{1,2,3,4\},$ form a basis of $\mathrm H_1(D;\mathrm{Ad}_\rho),$ and $\mathrm H_1(D;\mathrm{Ad}_\rho)\cong \mathbb C^6.$ This completes the proof. 
\end{proof}

\begin{proof}[Proof of Proposition \ref{D}] Since the adjoint twisted Reidemeister torsion is invariant under subdivisions, elementary expansions and elementary collapses of CW-complexes  by \cite{M,T}, we can do the computation using the spine $\Gamma$ of $D.$

The adjoint twisted Reidemeistor torsion equals, up to sign, the determinant of the $12\times 12$ matrix consisting of $\{\mathbf I_{12}\otimes (a_1-a_2), \mathbf I_{13}\otimes (a_1-a_3), \mathbf I_{14}\otimes (a_1-a_4), \mathbf I_{23}\otimes (a_2-a_3), \mathbf I_{24}\otimes (a_2-a_4), \mathbf I_{34}\otimes (a_3-a_4), \mathbf I_{13}\otimes a_3, \mathbf I_{23}\otimes a_3, \mathbf I_{34}\otimes a_3, \mathbf I_{12}\otimes a_1,  \mathbf I_{14}\otimes a_1, \mathbf I_{24}\otimes a_2\}$ as the columns divided by the determinant of the $6\times 6$ matrix consisting of $\{\partial(\mathbf I_{13}\otimes a_3), \partial(\mathbf I_{23}\otimes a_3), \partial(\mathbf I_{34}\otimes a_3), \partial(\mathbf I_{12}\otimes a_1),  \partial(\mathbf I_{14}\otimes a_1), \partial(\mathbf I_{24}\otimes a_2)\}$ as the columns.

By (\ref{det2}), (\ref{det3}) and (\ref{det4}), we have for the holonomy representation of a hyperbolic D-block,
\begin{equation*}
\begin{split}
&\mathrm{Tor}(D, \mathbf h_D;\mathrm{Ad}_\rho)\\
=&\pm\frac{\det[\mathbf I_{12},\mathbf I_{13},\mathbf I_{14}]\cdot\det[\mathbf I_{12},\mathbf I_{23},\mathbf I_{24}]\cdot\det[\mathbf I_{13},\mathbf I_{23},\mathbf I_{34}]\cdot\det[\mathbf I_{14},\mathbf I_{24},\mathbf I_{34}]}{\det[\mathbf I_{13},\mathbf I_{23},\mathbf I_{34}]\cdot\det\Big[\mathbf I_{12}-\mathrm{Ad}_\rho ([\gamma_{13}])^T\cdot\mathbf I_{12},\ \mathbf I_{14}-\mathrm{Ad}_\rho ([\gamma_{13}])^T\cdot\mathbf I_{14},\mathbf I_{24}-\mathrm{Ad}_\rho ([\gamma_{23}])^T\cdot\mathbf I_{24}\Big]}\\
=&\pm\frac{\mathbf i \sinh l_{14}\sinh s_{24}\sinh s_{34}}{32\sin\alpha_{12}\sin\alpha_{13}\sin\alpha_{23}}\\
=&\pm\frac{\sqrt{\det \mathrm G_{\boldsymbol\alpha}}}{32\sin\alpha_{12}\sin\alpha_{13}\sin\alpha_{14}\sin\alpha_{23}\sin\alpha_{24}\sin\alpha_{34}}\\
=&\pm\frac{\sqrt{\det\mathbb G\Big(\frac{u_{12}}{2}, \frac{u_{13}}{2}, \frac{u_{14}}{2}, \frac{u_{23}}{2}, \frac{u_{24}}{2},\frac{u_{34}}{2}\Big)}}{32\sinh\frac{ u_{12}}{2}\sinh\frac{ u_{13}}{2}\sinh\frac{ u_{14}}{2}\sinh\frac{ u_{23}}{2}\sinh\frac{ u_{24}}{2}\sinh\frac{ u_{34}}{2}},
\end{split}
\end{equation*}
where the last  equality comes from (\ref{symmetry}).

Finally, by Lemma \ref{LMD} and the analyticity of the involved functions, the result holds for all $\boldsymbol\gamma$-regular characters  in $\mathrm X(D).$
 \end{proof}



\section{Reidemeister torsion of the Mayer-Vietoris sequence}\label{TMV}

Let $M$ be the complement of a fundamental shadow link with $n$ components, and let $\rho:\pi_1(M)\to\mathrm{PSL}(2;\mathbb C)$ be an irreducible representation. We insert a thickened pair of pants if necessary so that no $D$-block self-intersects. Suppose there are in total $c$ thickened pairs of pants inserted, and the $3$-dimensional objects ($D$-blocks  and the thickened pairs of pants) intersect at $p$ pairs of pants, then we have $p=c+2d.$ Order the $c$ thickened pair of pants together with the $d$ $D$-blocks by $D_1,\dots, D_{c+d},$ and order  the $p$ pairs of pants by $P_1,\dots, P_p.$ Then by Lemma \ref{shortexact} there is the following short exact sequence of chain complexes
\begin{equation*}
0\to \bigoplus_{j=1}^p \mathrm C_*(P_j;\mathrm{Ad}_\rho) \xrightarrow{\delta} \bigoplus_{k=1}^{c+d} \mathrm C_*(D_k;\mathrm{Ad}_\rho)\xrightarrow{\epsilon}  \mathrm C_*(M;\mathrm{Ad}_\rho) \to 0
\end{equation*}
with $\epsilon$ defined by the sum
\begin{equation}\label{sum}
\epsilon(\mathbf c_1,\dots,\mathbf c_{c+d})=\sum_{k=1}^{c+d}\mathbf c_k
\end{equation}
and $\delta$ defined by the alternating sum 
\begin{equation}\label{alt}
(\delta \mathbf c)_k=-\sum_j\mathbf c_j+\sum_l\mathbf c_l,
\end{equation}
where $j$ runs over the indices such that $P_j=D_{k'}\cap D_k$ for some $k'<k$ and $l$ runs over the indices such that $P_l=D_k\cap D_{k''}$ for some $k<k''.$ 

For each $i\in\{1,\dots, n\},$ let $T_i=\partial N(L_i)$ be the boundary of a tubular neighborhood of the $i$-th component of $L_{\text{FSL}},$ $ m_i$ be the meridian of $N(L_i)$ and $\boldsymbol m=( m_1,\dots,  m_n).$  Suppose $\rho$ is an $\boldsymbol m$-regular representation whose restriction to each pair of pants $P_j$ is  $\boldsymbol \gamma$-regular as defined  in Definition \ref{Preg}, and to 
each $D$-block $D_k$  is $\boldsymbol \gamma$-regular as defined  in Definition \ref{Dreg}, then the induced Mayer-Vietoris exact sequence $\mathcal H$ has four nonzero terms, ie,
\begin{equation}\label{MV}
0\to \mathrm H_2(M;\mathrm{Ad}_\rho) \xrightarrow{\partial} \bigoplus_{j=1} ^p\mathrm H_1(P_j;\mathrm{Ad}_\rho)\xrightarrow{\delta} \bigoplus_{k=1}^{c+d} \mathrm H_1(D_k;\mathrm{Ad}_\rho)\xrightarrow{\epsilon}  \mathrm H_1(M;\mathrm{Ad}_\rho) \to 0.
\end{equation}
Let $\mathbf I_i$ be up to sign the unique invariant vector of $\mathrm{Ad}_\rho([ m_i])^T$ with $\kappa(\mathbf I_i,\mathbf I_i)=1.$ Then by a diagram chasing,  $\mathrm H_1(M;\mathrm{Ad}_\rho)$ has a basis $\mathbf h^1_{(M,\boldsymbol m)}=\{ \mathbf I_1\otimes [ m_1],\dots, \mathbf I_n\otimes [ m_n]\}$ and $\mathrm H_1(M;\mathrm{Ad}_\rho)$ has a basis $\mathbf h^2_{M}=\{ \mathbf I_1\otimes [ T_1],\dots, \mathbf I_n\otimes [ T_n]\}.$ 


\begin{proposition}\label{MVS} Let $\mathbf h_{P_j}$ be the basis of $\mathrm H_1(P_j;\mathrm{Ad}_\rho)$ in Definition \ref{Preg} and let $\mathbf h_{D_k}$  be the basis of  $\mathrm H_1(D_k;\mathrm{Ad}_\rho)$  in Definition  \ref{Dreg}.
Let $\mathbf h_{**}$ be the union of $\mathbf h^1_{(M,\boldsymbol m)},$ $\mathbf h^2_M,$ $\sqcup_j\mathbf h_{P_j}$ and $\sqcup_k\mathbf h_{D_k}.$ Then
\begin{equation} 
\mathrm{Tor}(\mathcal H, \mathbf h_{**})=\pm 1.
\end{equation}
\end{proposition}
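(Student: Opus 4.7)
The plan is to decompose $\mathcal H$ into a direct sum of $n$ smaller acyclic four-term sequences $\mathcal H_1, \ldots, \mathcal H_n$, one for each component of $L_{\text{FSL}}$ (respectively, each edge of $N$), and to verify that each $\mathcal H_j$ has torsion $\pm 1$.

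First I would observe that every basis vector in $\mathbf h^1_M$, $\mathbf h^2_M$, $\sqcup_l \mathbf h_{P_l}$, and $\sqcup_k \mathbf h_{D_k}$ has the form $\mathbf I \otimes [\gamma]$, where $\gamma$ is a loop around a tubular neighborhood of the $j$-th component (or the double of the $j$-th edge) in some piece of the decomposition, and $\mathbf I$ is an invariant vector of the adjoint action of the holonomy of $\gamma$. Since the invariant subspace of $\mathrm{Ad}_\rho$ on each such element is one-dimensional, the various invariant vectors labelled by $j$ in the different pieces all coincide up to a common scalar; choosing a consistent normalization, they become equal. Consequently, $\epsilon$ sends $\mathbf I_{ab} \otimes [\gamma_{ab}]$ to $\pm \mathbf I_j \otimes [\mathbf m_j]$ where $j$ indexes the component containing $e_{ab}$; $\delta$ sends each $\mathbf I \otimes [\gamma] \in \mathbf h_{P_l}$, with $P_l = D_{k_1} \cap D_{k_2}$ and $k_1 < k_2$, to $\mathbf I \otimes [\gamma]$ in $D_{k_1}$ minus $\mathbf I \otimes [\gamma]$ in $D_{k_2}$; and the connecting map $\partial$ sends $\mathbf I_j \otimes [T_j]$ to a signed sum of $\mathbf I_j \otimes [\gamma]$ over the $r_j$ boundary circles of pairs of pants lying on $T_j$. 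All three maps preserve the $n$-fold decomposition indexed by $j$, giving $\mathcal H = \bigoplus_{j=1}^n \mathcal H_j$, where
$$\mathcal H_j: \quad 0 \to \mathbb C \xrightarrow{\partial_j} \mathbb C^{r_j} \xrightarrow{\delta_j} \mathbb C^{r_j} \xrightarrow{\epsilon_j} \mathbb C \to 0,$$
and $r_j$ is the number of meridian circles on the torus $T_j$. Geometrically, $\mathcal H_j$ is the Mayer-Vietoris sequence of $T_j$ decomposed into $r_j$ annuli by these meridians.

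For each $j$, the matrix of $\delta_j$ is the signed incidence matrix of this cyclic annular decomposition: every column has exactly one $+1$ and one $-1$ with indices shifted cyclically. The row $\epsilon_j$ has all entries $\pm 1$, and $\partial_j$ is a column vector with entries $\pm 1$ spanning the one-dimensional $\ker(\delta_j)$. I would then compute $\mathrm{Tor}(\mathcal H_j)$ using the standard formula for the torsion of an acyclic chain complex: pick a basis of $\mathrm{im}(\delta_j) = \ker(\epsilon_j)$ by taking $r_j - 1$ of the columns of $\delta_j$, lift the generator of the $j$-th summand of $H_1(M;\mathrm{Ad}_\rho)$ through $\epsilon_j$ by choosing the remaining $e_i$, and lift the chosen basis of $\mathrm{im}(\delta_j)$ back to $\bigoplus H_1(P_l;\mathrm{Ad}_\rho)$ via the corresponding $\gamma_i$'s. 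The two change-of-basis determinants that arise evaluate to $\pm 1$ by a direct cofactor expansion along the row/column corresponding to the chosen lifts, so $\mathrm{Tor}(\mathcal H_j) = \pm 1$, and the product over $j$ gives $\mathrm{Tor}(\mathcal H, \mathbf h_{**}) = \pm 1$.

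The main obstacle is establishing the explicit combinatorial description of the connecting map $\partial$, which requires unwinding the Mayer-Vietoris definition: one lifts $T_j \in C_2(M;\mathrm{Ad}_\rho)$ to the sum of its annular pieces in $\bigoplus_k C_2(D_k;\mathrm{Ad}_\rho)$, computes the chain-level boundary, and writes it as $\delta(y)$ for an explicit $y \in \bigoplus_l C_1(P_l;\mathrm{Ad}_\rho)$. Keeping the orientations and sign conventions consistent across $D$-blocks, pairs of pants, and tori is delicate, but only affects the overall sign of each $\mathrm{Tor}(\mathcal H_j)$, which is $\pm 1$ in any case.
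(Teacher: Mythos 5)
Your proof is correct, but it is organized quite differently from the paper's. The paper does not decompose $\mathcal H$ at all: it computes $\mathrm{Tor}(\mathcal H,\mathbf h_{**})$ directly from the definition by choosing explicit lifting bases $\widetilde{\mathbf b}_2,\widetilde{\mathbf b}_1,\widetilde{\mathbf b}_0$ for the full four-term sequence and showing each transition determinant is $\pm1$; the hardest step (the basis $\mathbf b_1\sqcup\widetilde{\mathbf b}_0$ of $\bigoplus_k\mathrm H_1(D_k;\mathrm{Ad}_\rho)$) is handled by a recursive reduction on a matrix whose columns have either one nonzero entry $\pm1$ or one $+1$ and one $-1$ — essentially a hands-on proof of unimodularity of a signed incidence matrix. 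You instead split $\mathcal H$ into the direct sum $\bigoplus_j\mathcal H_j$ graded by link components (resp.\ edges), identify each $\mathcal H_j$ with the Mayer--Vietoris sequence of the torus $T_j$ cut into $r_j$ annuli, and read off $\pm1$ from spanning-tree minors of the cycle incidence matrix. Both arguments ultimately rest on the same combinatorial fact (all transition matrices have entries in $\{0,\pm1\}$ with the stated incidence pattern), but your localization makes the $\pm1$ conclusion essentially immediate, at the cost of having to justify that $\partial$, $\delta$, $\epsilon$ preserve the grading and that the invariant vectors attached to a given component in the various pieces are consistently normalized — a point you rightly flag and which the paper passes over with its ``diagram chasing'' remark for $\partial$. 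The paper's global argument avoids the grading discussion entirely and, being a pure matrix statement, does not depend on the annuli forming a single cycle on each $T_j$, only on the row/column sparsity pattern guaranteed by the ``no self-intersection'' normalization obtained by inserting thickened pairs of pants (a hypothesis your cyclic description also quietly uses).
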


\begin{proof} By \cite[Proposition 3.22, Corollary 3.23]{P}, Lemma \ref{LI} and Lemma \ref{LMD} and the fact that a thickened pair of pants is simple homotopic to a pair of pants, with the chosen bases $\mathbf h^1_{(M,\boldsymbol m)},$ $\mathbf h^2_M,$ $\sqcup_j\mathbf h_{P_j}$ and $\sqcup_k\mathbf h_{D_k},$ we have $$\mathrm H_2(M;\mathrm{Ad}_\rho)\cong \mathbb C^n,$$ $$\bigoplus_{j=1} ^p\mathrm H_1(P_j;\mathrm{Ad}_\rho)\cong \mathbb C^{3p},$$ $$\bigoplus_{k=1}^{c+d} \mathrm H_1(D_k;\mathrm{Ad}_\rho)\cong \mathbb C^{3c+6d}$$ and $$\mathrm H_1(M;\mathrm{Ad}_\rho)\cong \mathbb C^n.$$

In the rest of the proof, we will fix these isomorphisms and identify the linear maps $\partial,$ $\delta$ and $\epsilon$ with the left multiplications of the corresponding matrices. In particular, $\partial$ corresponds to a $3p \times n$ matrix, $\delta$ corresponds to a $ (3c+6d)\times 3p$ square matrix and $\epsilon$ corresponds to an $n\times (3c+6d)$ matrix.

For $C_3=\mathrm H_2(M;\mathrm{Ad}_\rho),$  we choose the lifting base $\widetilde{\mathbf b}_2$ to be $\mathbf h^2_M.$ Then 
\begin{equation}\label{D3}
[\widetilde{\mathbf b}_2 ;\mathbf h^2_M]=1.
\end{equation}

For $C_2= \bigoplus_{j=1} ^p\mathrm H_1(P_j;\mathrm{Ad}_\rho),$ we first order the vectors in $\widetilde{\mathbf b}_2=\mathbf h^2_M$ by $\{ \mathbf u_1,\dots, \mathbf u_n\}.$ Then $\mathbf b_2=\{\partial( \mathbf u_1),\dots,\partial ( \mathbf u_n)\}.$ We also order the vectors in $\sqcup_j\mathbf h_{P_j}$ by $\{\mathbf v_1,\dots,\mathbf v_{3p}\},$ and choose the lifting basis $\widetilde{\mathbf b}_1$ as follows. Since the sequence (\ref{MV}) is exact, $\delta$ has rank $3c+6d-n=3p-n.$ Suppose a basis of the column space of $\delta$ consists of the columns $\{\mathbf w_{j_1},\dots,\mathbf w_{j_{3c+6d-n}}\}$ of $\delta,$ then we let $\widetilde{\mathbf b}_1=\{\mathbf v_{j_1},\dots,\mathbf v_{j_{3p-n}}\}.$ Next we compute $\det[\mathbf b_2\sqcup\widetilde{\mathbf b}_1;\sqcup_j\mathbf h_{P_j}].$ Recall that there is a one-to-one correspondence between $\{ \mathbf u_1,\dots, \mathbf u_n\}$ and the boundary components $\{T_1,\dots, T_k\}$ of $M$ and a one-to-one correspondence between $\{\mathbf v_1,\dots,\mathbf v_{3p}\}$ and the boundary components of the disjoint union $\sqcup P_j$ of $\{P_j\}.$ Then a diagram chasing show that 
$$\partial( \mathbf u_k)=\sum_{s=1}^{n_k}\pm\mathbf v_{i_s},$$
where $n_k$ is the number of the boundary components of $\sqcup P_j$ intersecting $T_k,$ $\mathbf v_{i_1},\dots,\mathbf v_{i_{n_k}}$ are the vectors corresponding to those boundary components of $\sqcup_jP_j$ and the signs $\pm$ are determined as follows. Fix an orientation of the longitude $l_k$ of $T_k,$ and suppose $P_{i_s}=D_r\cap D_t$ and $D_r$ comes immediately before $D_t$ along $l_k$ in the chosen orientation. Then the sign in front of $\mathbf v_{i_s}$ is $+$ if $r>t,$ and is $-$ if otherwise. Since each boundary component of $\sqcup_jP_j$ intersects exactly one boundary component of $M,$ each row of the $n\times 3p$ matrix $\partial$ has exactly one nonzero entry, which equals either $1$ or $-1.$ Therefore, rows $j_1,\dots,j_{3p-n}$ of the matrix $\mathbf b_2\sqcup\widetilde{\mathbf b}_1$ have exactly two nonzero entries, one from $\mathbf b_2$ and one from $\widetilde{\mathbf b}_1;$ and the other rows of $\mathbf b_2\sqcup\widetilde{\mathbf b}_1$ have exactly one nonzero entry. Let $M$ be the $(3p-n)\times(3p-n)$ matrix consisting of the rows $j_1,\dots,j_{3p-n}$ of the columns $\mathbf v_{j_1},\dots,\mathbf v_{j_{3p-n}}$ of $\mathbf b_2\sqcup\widetilde{\mathbf b}_1,$ and let $N$ be the $n\times n$ matrix obtained from $\mathbf b_2\sqcup\widetilde{\mathbf b}_1$ by removing those rows and columns. Then each row of $M$ and $N$ contains exactly one nonzero entry, which equals $1$ or $-1,$ hence $\det M=\pm 1,$ $\det N=\pm 1$ and $\det[\mathbf b_2\sqcup\widetilde{\mathbf b}_1]=\pm\det M\cdot\det N=\pm1.$ Therefore, 
\begin{equation}\label{D2}
[\mathbf b_2\sqcup\widetilde{\mathbf b}_1;\sqcup_j\mathbf h_{P_j}]=\pm1.
\end{equation}

For $C_1=\bigoplus_{k=1}^{c+d} \mathrm H_1(D_k;\mathrm{Ad}_\rho),$ we have $\mathbf b_1=\{\delta(\mathbf v_{j_1}),\dots,\delta(\mathbf v_{j_{3p-n}})\}=\{\mathbf w_{j_1},\dots,\mathbf w_{j_{3c+6d-n}}\}.$ We choose the lifting basis $\widetilde{\mathbf b}_0$ as follows. Since each $P_j$ is adjacent to two of $\{D_1,\dots,D_{c+d}\}$ without redundancy and each edge of $D_k$ connects two of $\{P_1,\dots,P_p\}$ without redundancy, 
by (\ref{alt}) each row of $\delta$ has exactly two nonzero entries each of which equals $1$ or $-1,$ and each column of $\delta$ has exactly two nonzero entries, one equals $1$ and the other equals $-1.$ For $t\notin \{j_1,\dots,j_{3c+6d-n}\},$ let $\mathbf x_t\in\mathbb C^{3c+6d}$ be the vector obtained from the column $\mathbf w_t$ of $\delta$ by replacing the entry $-1$ by $0.$ Then we let $\widetilde{\mathbf b}_0=\big\{\mathbf x_t\ |\ t\in \{1,\dots,3c+6d\}\setminus\{j_1,\dots,j_{3c+6d-n}\}\big\}.$ Now we claim that $\{\mathbf x_t\}$ are linearly independent and $\epsilon(\mathbf x_t)\neq 0$ for each $t$ so that $\mathbf b_1\sqcup\widetilde{\mathbf b}_0$ form a basis of $C_1.$ Indeed, since each $\mathbf x_t$ contains only one nonzero component, to prove the linear independence it suffices to prove that no two nonzero entries of $\{\mathbf x_t\}$ are in the same row. Suppose otherwise that $\mathbf x_{t_1}$ and $\mathbf x_{t_2}$ have nonzero components in  row $k,$ then due to the fact that each row of $\delta$ has only two nonzero entries, the $k$-th component of all the comlmns $\mathbf w_{j_1},\dots,\mathbf w_{j_{3c+6d-n}}$ are $0.$ This contradicts the fact that $\{\mathbf w_{j_1},\dots,\mathbf w_{j_{3c+6d-n}}\}$ is a basis of the column space of $\delta$ since $\mathbf w_{t_1}$ and $\mathbf w_{t_2}$ have the $k$-th component equal to $1$ and neither of them can be written as a linear combination of $\{\mathbf w_{j_1},\dots,\mathbf w_{j_{3c+6d-n}}\}.$ Also, since each edge of $D_k$ belongs to exactly one boundary component of $M,$ by (\ref{sum}) $\epsilon(\mathbf x_t)$ has exactly one nonzero component which equals $1,$ hence is nonzero. This finishes the proof of the claim. Next, we compute $\det[\mathbf b_1\sqcup\widetilde{\mathbf b}_0].$ We observe that the matrix $[\mathbf b_1\sqcup\widetilde{\mathbf b}_0]$ satisfies the following three properties: 
\begin{enumerate}[(I)]
\item It is nonsingular.
\item Each column has either exactly one nonzero component which equals $\pm 1;$ or has exactly two nonzero components, one equals $1$ and the other equals $-1.$
\item There is at least one column containing exactly one nonzero component. 
\end{enumerate}
We let $t_1,\dots,t_n$ be the rows where some $\mathbf x_t$ has nonzero components. Let $M_1$ be the $n\times n$ matrix consisting of the rows $t_1,\dots,t_n$ of the vectors $\{\mathbf x_t\},$ and let $N_1$ be the $(3c+6d-n)\times(3c+6d-n)$ matrix obtained from $\mathbf b_1$ by removing those rows. Since each column of $M_1$ contains exactly one $1$ and no two $1'$s are in the same row, $\det M_1=\pm 1.$ As a consequence, we have $\det[\mathbf b_1\sqcup\widetilde{\mathbf b}_0]=\pm\det M_1\cdot\det N_1=\pm \det N_1.$ We claim that $N_1$ also satisfies the properties (I), (II) and (II). Indeed, (I) comes from the equality right above and (II) comes from the construction of $N_1.$ For (III), suppose otherwise that all the columns of $N_1$ has one $1$ and one $-1,$ then all rows of $N_1$ add up to zero and $N_1$ is singular, which contradicts (I). Therefore, we can collect all the columns of $N_1$ containing only one nonzero components, and let $M_2$ be the square matrix consisting of the rows that contain those nonzero components, and let $N_2$ be the square matrix consisting of the other columns with those rows removed. Then $\det N_1=\det M_2\cdot\det N_2.$ Since $\det N_1\neq 0,$ we have $\det M_2\neq 0.$ This implies that no two nonzero components of $M_2$ are in the same row. Together with the fact that all the columns of $M_2$ has only one nonzero entry $\pm 1,$ we have $\det M_2=\pm 1.$ This implies that $\det N_1=\pm\det N_2.$ By the same argument, we have that $N_2$ satisfies properties (I), (II) and (III), and we can recursively construct smaller square matrices $M_3,N_3,\dots, M_k, N_k, \dots$ that $M_k$ consists of the rows containing those nonzero entries of the columns of $N_{k-1}$ containing exactly one nonzero entry and $N_k$ consists of the other columns of $N_{k-1}$ with those rows removed, so that $\det M_k=\pm 1,$ $\det N_{k-1}=\pm \det M_k\cdot \det N_k=\pm \det N_k$ and $N_k$ satisfies (I), (II) and (III). This algorithm stops at some $k$ when all columns of $N_k$ contain exactly one nonzero entry $\pm 1,$  and we have $\det[\mathbf b_1\sqcup\widetilde{\mathbf b}_0]=\pm\det N_1=\dots=\pm \det N_k=\pm 1.$ Therefore,
\begin{equation}\label{D1}
[\mathbf b_1\sqcup\widetilde{\mathbf b}_0;\sqcup_k\mathbf h_{P_k}]=\pm1.
\end{equation}

For $C_0= \mathrm H_1(M;\mathrm{Ad}_\rho),$ we have $\mathbf b_0=\big\{ \epsilon(\mathbf x_t)\ |\ t\in \{1,\dots,3c+6d\}\setminus\{j_1,\dots,j_{3c+6d-n}\}\big\}.$ Since $\mathbf b_1\sqcup\widetilde{\mathbf b}_0$ form a basis of $C_1$ and $\mathbf b_1$ lies in the kernel of $\epsilon,$ $\mathbf b_0$ is a basis of $C_0.$ In the previous paragraph, we show that each $\epsilon(\mathbf x_t)$ contains exactly nonzero entry $1,$ hence $\det[\mathbf b_0]=\pm 1,$ which is the same as
\begin{equation}\label{D0}
[\mathbf b_0;\mathbf h^1_{(M,\boldsymbol m)}]=\pm1.
\end{equation}

Therefore, by (\ref{D3}), (\ref{D2}), (\ref{D1}) and (\ref{D0}), we have
$$\mathrm{Tor}(\mathcal H;\mathbf h_{**})=\frac{[\widetilde{\mathbf b}_2;\mathbf h^2_M]\cdot[\mathbf b_1\sqcup\widetilde{\mathbf b}_0;\sqcup_k\mathbf h_{P_k}]}{[\mathbf b_2\sqcup\widetilde{\mathbf b}_1;\sqcup_j\mathbf h_{P_j}]\cdot[\mathbf b_0;\mathbf h^1_{(M,\boldsymbol m)}]}=\pm 1.$$
 \end{proof}



\section{Proof of Theorems \ref{main1}, \ref{main3} and \ref{main2}}

\begin{proof}[Proof of Theorem \ref{main1}]  

For (1), let $M$ be a fundamental shadow link complement. Recall that $M$ is the union of $D$-blocks by orientation reversing homeomorphisms between the $3$-puncture spheres (which is homeomorphic to a pair of pants). For each pair of pants $P$ and $i\in\{1,2,3\},$ let $\gamma_i$ be the simple closed curve around the puncture $p_i;$ and for each D-block $D$ and $\{j,k\}\subset\{1,2,3,4\},$ let $\gamma_{jk}$ be the simple closed curve around the edge $e_{jk}.$ Then $(\gamma_1,\gamma_2,\gamma_3)$ is the restriction of the meridians $\boldsymbol m$ of $M$  to $P,$ and $(\gamma_{12},\dots,\gamma_{34})$ is the restriction  of $\boldsymbol m$ to $D.$
Let $\rho:\pi_1(M)\to\mathrm{PSL}(2;\mathbb C)$ be an $\boldsymbol m$-regular representation, and we will consider the following three cases:
\begin{enumerate}[\text{Case} I.]
\item  The restriction of $[\rho]$  to each pair of pants $P_j$ is  $\boldsymbol \gamma$-regular as defined  in Definition \ref{Preg}, and to 
each $D$-block $D_k$  is $\boldsymbol \gamma$-regular as defined  in Definition \ref{Dreg}.
\item $[\rho]$ is not in Case I, and $\mathrm {Tr}\rho([m_i])\neq\pm 2$ for all $i\in\{1,\dots, n\}.$
\item Otherwise.
\end{enumerate}

If $[\rho]$ is in Case I, then by Theorem \ref{MaV}, Propositions \ref{P}, \ref{D} and \ref{MVS},  we have
$$\mathbb T_{(M,\boldsymbol m)}([\rho])=\mathrm{Tor}(M; \{\mathbf h^1_{(M,\boldsymbol m)}, \mathbf h^2_M\}; \mathrm {Ad}_\rho)=\pm2^{3d}\prod_{k=1}^d \sqrt{\det \mathbb G_k}.$$
This completes the proof of (1) for $[\rho]$ in Case I.
\\


Next we show that each  $[\rho]$ in  Case II and Case III  is in the closure of the set of characters in Case I in the classical (Hausdorff) topology, and the continuity of adjoint twisted Reidemeister torsion and  the determinants of the Gram matrix functions will complete the proof. 
\\

For Case II, we first recall \cite[Proposition 5.13]{P2} that, if $\rho$ is $\boldsymbol m$-regular and $\mathrm {Tr}\rho([m_i])\neq\pm 2$ for all $i\in\{1,\dots, n\},$ ie, is in Case II, then the logarithmic holonomies  $(u_1,\dots, u_n)$ form a local coordinates of $\mathrm X(M)$ near $[\rho].$ Since the restriction of $[\rho]$ to each $P_j$ and $D_k$ will possibly identity the  traces of certain curves in $\boldsymbol \gamma,$ we consider the following 
subsets of $\mathrm X(P_j)$ and $\mathrm X(D_k).$ For an equivalence relation $\sim$ on the index set $I_P=\{1,2,3\}$ with the set of equivalence classes $\overline {I_P},$ let 
$$\mathrm X_{\overline {I_P}}(P)=\big\{ [\rho]\in\mathrm X(P)\ \big|\ \text{for any lifting }\widetilde\rho\text{ of }\rho, \mathrm{Tr}\widetilde\rho([\gamma_a])=\pm\mathrm{Tr}\widetilde\rho([\gamma_b]) \text{ for } a, b\in I_P \text{ with } a\sim b\big\};$$
and for an equivalence relation $\sim$ on the index set $I_D=\{12,\dots, 34\}$ with  the set of equivalence classes $\overline {I_D},$ let 
$$\mathrm X_{\overline {I_D}}(D)=\big\{ [\rho]\in\mathrm X(D)\ \big|\  \text{for any lifting }\widetilde\rho\text{ of }\rho, \mathrm{Tr}\widetilde\rho([\gamma_{c}])=\pm\mathrm{Tr}\widetilde\rho([\gamma_{d}]) \text{ for } c,d\in I_D\text{ with }  c\sim d\big\}.$$
Then the restriction of $[\rho]$ to each $P_j$ is in $\mathrm X_{\overline {I_P}}(P_j)$ for some  $\overline {I_P};$ and the restriction of $[\rho]$ to each $D_k$ is in $\mathrm X_{\overline {I_D}}(D_k)$ for some $\overline {I_D}.$ Let
$$\mathrm Z_{\overline {I_P}}(P)=\mathrm Z(P)\cap \mathrm X_{\overline {I_P}}(P)$$
and let
$$\mathrm Z_{\overline {I_D}}(D)=\mathrm Z(D)\cap \mathrm X_{\overline {I_D}}(D).$$
Then by formulas (\ref{det}), (\ref{det5}) and (\ref{rational}), for any quotient set $\overline {I_P},$ $\mathrm Z_{\overline {I_P}}(P)$ is dense in $\mathrm X_{\overline {I_P}}(P)$ in the classical topology; and by (\ref{det2}), (\ref{det3}), (\ref{det4}) and (\ref{symmetry}), for any quotient set $\overline {I_D},$ $\mathrm Z_{\overline {I_D}}(D)$ is dense in $\mathrm X_{\overline {I_D}}(D)$  in the classical topology.  (Indeed, the numerators in the square root of the right hand side of both (\ref{rational}) and (\ref{symmetry}) have a constant term $-1$ which always stays under the identifications of the variables, hence the relevant analytic functions in the logarithmic holonomies never become the zero function.)  As a consequence, any character  in Case II is in the closure of the set of characters in Case I in the classical topology. This completes the proof of (1) for $[\rho]$ in Case II.
\\

For a character $[\rho]$ in Case III, we show that it can be smoothly  perturbed  into Case I or Case II. Recall that the Killing form $\kappa$ on $\mathfrak{psl}(2;\mathbb C)$ defines a non-degenerate bi-linear form $\langle\ ,\ \rangle: \mathrm H_1(M,\mathrm{Ad}_\rho)\times \mathrm H^1(M,\mathrm{Ad}_\rho)\to\mathbb C,$ and the basis $\mathbf h^1_{(M,\boldsymbol m)}$ of $\mathrm H_1(M,\mathrm{Ad}_\rho)$ gives an isomorphism between $\mathrm H_1(M,\mathrm{Ad}_\rho)$ and  $\mathrm H^1(M,\mathrm{Ad}_\rho).$ For each $i\in \{1,\dots,n\},$ let $\mathbf v_i$ be the element in  $\mathrm H^1(M,\mathrm{Ad}_\rho)$ dual to $\mathbf I_i\otimes[m_i]$  under this isomorphism, ie, $\langle\mathbf v_i, \mathbf I_j\otimes [m_j]\rangle=\delta_{ij},$ the Kronecker symbol.  Let $I\subset \{1,\dots,n\}$ be the subset of the indices $i$ such that $\mathrm {Tr}\rho([m_i])=\pm 2,$ and let 
$$\mathbf v=\sum_{i\in I}\mathbf v_i.$$
 We consider $\mathbf v$  as a Zariski-tangent vector of $\mathrm X(M)$ at $[\rho].$ Since $[\rho]$ is $\boldsymbol m$-regular, it is a smooth point of $\mathrm X(M).$ As a consequence, $\mathbf v$ can be realized as the tangent vector of a deformation $[\rho_t],$ $t\in[0,\epsilon).$ Then $[\rho_t]$ is the desired perturbation of $[\rho],$ as for $t\neq 0,$ 
 $$\mathrm {Tr} \rho_t([m_i])\neq \mathrm {Tr} \rho([m_i])=\pm 2$$ 
for $i\in I,$ and $$\mathrm {Tr} \rho_{t}([m_j])=\mathrm {Tr} \rho([m_j])\neq \pm 2$$
for $j\notin I.$ This shows that any representation  in Case III is in the closure of the set of the representations in Cases I and II in the classical topology, and completes the proof of (1) for $[\rho]$ in Case III.
\\

(2) is a direct consequence of (1) and Theorem \ref{funT} (ii).
\end{proof}

\begin{proof}[Proof of Theorem \ref{main3}]  Let $\boldsymbol m$ be the system of meridians of $M.$ If the restriction $[\rho]$ of $[\rho_{\boldsymbol \mu}]$ to $M$ is $\boldsymbol m$-regular, then the result follows directly from Theorem \ref{main1} and Theorem \ref{funT} (iii).

If $[\rho]$ is not $\boldsymbol m$-regular, then by Theorem \ref{funT} (i) that $\boldsymbol m$-regular characters are dense in the distinguished component of $\mathrm X(M),$ $[\rho]$ is a limit point of $\boldsymbol m$-regular characters. Then by the analyticity of the adjoint twisted Reidemeister torsion, the formula has a removable singularity at $[\rho]$ and hence can be evaluated by taking the limit of the values at the nearby $\boldsymbol m$-regular characters.
\end{proof}

\begin{proof}[Proof of Theorem \ref{main2}] From Section \ref{dhp}, we see that $M$ is homeomorphic to a fundamental shadow link complement with the meridians (as of the fundamental shadow link complement) the preferred longitude $\boldsymbol l.$ Let $\boldsymbol m=(m_1,\dots,m_n)$ be the simple closed curves around the edges, and let $(\gamma_1,\dots,\gamma_n)$ be the double of the edges. Then the holonomy representation $\rho$ of the hyperbolic cone metric has the logarithmic holonomies $u_i=u_{\gamma_i}=2l_i$ and $u_{m_i}=2\mathbf i\theta_i$ for $i\in\{1,\dots,n\}.$ Since a truncated hyperideal tetrahedron is determined and infinitesimally determined by its six edge lengths,  $\rho$ is $\boldsymbol l$-regular; and by \cite[Theorem 1.2 (b)]{LY}, $\rho$ is determined and infinitesimally determined by its cone angles $(\theta_1,\dots,\theta_n),$ hence is $\boldsymbol m$-regular. Then  (1) and (2) respectively follow from Theorem \ref{main1} (1) and (2), and (3) follows from Theorem \ref{funT} (iii). 
\end{proof}



\noindent
Ka Ho Wong\\
Department of Mathematics\\  Texas A\&M University\\
College Station, TX 77843, USA\\
(daydreamkaho@math.tamu.edu)
\\

\noindent
Tian Yang\\
Department of Mathematics\\  Texas A\&M University\\
College Station, TX 77843, USA\\
(tianyang@math.tamu.edu)

\end{document}